\newtheorem{theorem}{Theorem}[section]
\newtheorem{proposition}[theorem]{Proposition}
\newtheorem{lemma}[theorem]{Lemma}
\newtheorem{corollary}[theorem]{Corollary}
\newtheorem{remark}[theorem]{Remark}
\newtheorem*{definition*}{Definition}
\newtheorem*{proposition*}{Proposition}
\newtheorem*{corollary*}{Corollary}
\newtheorem*{lemma*}{Lemma}
\newtheorem*{theorem*}{Theorem}
\def\cW{\mathcal W}
\def\cX{\mathcal X}
\def\cY{\mathcal Y}
\def\cZ{\mathcal Z}
\def\deg{\mbox{\rm deg}}
\def\bq{{\bar q}}
\def\gg{\mathfrak{g}}
\newcommand{\aut}{\mbox{\rm Aut}}
\newcommand{\K}{\mathbb{K}}
\title{Generalized Artin-Mumford curves over finite fields}
\date{}
\author{Maria Montanucci and Giovanni Zini}
\begin{document}

%\vspace{0.5cm}\noindent {\em Keywords}:
%Algebraic curves, algebraic function fields, positive characteristic, automorphism groups.
%\vspace{0.2cm}\noindent

%\vspace{0.5cm}\noindent {\em Subject classifications}:
%\vspace{0.2cm}\noindent  14H37, 14H05.

%\thanks{{\em Keywords}: Algebraic curves, Positive characteristic, Automorphism groups.}

\begin{abstract}
Let $\mathbb{F}_q$ be the finite field of order $q=p^h$ with $p>2$ prime and $h>1$, and let $\mathbb{F}_{\bar{q}}$ be a subfield of $\mathbb{F}_q$.
From any two $\bar{q}$-linearized polynomials $L_1,L_2 \in \overline{\mathbb{F}}_q[T]$ of degree $q$, we construct an ordinary curve  $\mathcal{X}_{(L_1,L_2)}$ of genus $\gg=(q-1)^2$ which is a generalized Artin-Schreier cover of the projective line $\mathbb{P}^1$. The  automorphism group of $\mathcal{X}_{(L_1,L_2)}$ over the algebraic closure $\overline{\mathbb{F}}_q$ of $\mathbb{F}_q$ contains a semidirect product $\Sigma \rtimes \Gamma$ of an elementary abelian $p$-group $\Sigma$ of order $q^2$ by a cyclic group $\Gamma$ of order $\bar{q}-1$. We show that for $L_1 \neq L_2$, $\Sigma \rtimes \Gamma$ is the full automorphism group $\aut(\mathcal{X}_{(L_1,L_2)})$ over $\overline{\mathbb{F}}_q$; for $L_1=L_2$ there exists an extra involution and $\aut(\mathcal{X}_{(L_1,L_1)})=\Sigma \rtimes \Delta$ with a dihedral group $\Delta$ of order $2(\bar{q}-1)$ containing $\Gamma$. Two different choices of the pair $\{L_1,L_2\}$ may produce birationally isomorphic curves, even for $L_1=L_2$. We prove that any curve of genus $(q-1)^2$ whose $\overline{\mathbb{F}}_q$-automorphism group contains an elementary abelian subgroup of order $q^2$ is birationally equivalent to $\mathcal{X}_{(L_1, L_2)}$ for some separable $\bar{q}$-linearized polynomials $L_1,L_2$ of degree $q$. We produce an analogous characterization in the special case $L_1=L_2$. This extends a result on the Artin-Mumford curves, due to Arakelian and Korchm\'aros \cite{AK}. 
\end{abstract}

\maketitle

\section{Introduction}

The Artin-Mumford curve $\mathcal{M}_c$ of genus $(p-1)^2$ defined over a field $\mathbb{F}$ of odd characteristic $p$ is the nonsingular model of the plane curve with affine equation
\begin{equation}
\label{AM}
(X^p-X)(Y^p-Y)=c, \quad c \in \mathbb{F}^*.
\end{equation}
Artin-Mumford curves, especially over non-Archimedean valued fields of positive characteristic, have been investigated in several papers; see \cite{CK}, \cite{CKK}, and \cite{CK2}. By a result of Cornelissen, Kato and Kontogeorgis \cite{CKK} valid over any non-Archimedean valued field $(\mathbb{F}, \mid \cdot \mid)$ of positive characteristic, if $| c | <1$ then  $\aut_{\mathbb{F}}(\mathcal{M}_c)$ is the semidirect product
\begin{equation}
\label{Konto}
(C_p \times C_p) \rtimes D_{p-1},
\end{equation}
where $C_p$ is a cyclic group of order $p$ and $D_{p-1}$ is a dihedral group of order $2(p-1)$. This result holds over any algebraically closed field; see \cite{VM}.

The interesting question whether the genus $(p-1)^2$ together with an automorphism group as in (\ref{Konto}) characterize the Artin-Mumford curve has been solved so far only for curves defined over $\mathbb{F}_p$; see \cite{AK}.

A natural generalization of Artin-Mumford curves arises when  the polynomials $X^p-X$ and $Y^p-Y$ in (\ref{AM}) are replaced by separable linearized polynomials $L_1,L_2$ of equal degree. Our aim is to investigate such generalized Artin-Mumford curves, especially their automorphism groups. To present our results, we need some notation that will also be used throughout the paper.

For an odd prime $p$ and powers $\bar{q}=p^n$ and $q=\bar{q}^m$, $\mathbb F_p$, $\mathbb F_{\bq}$, $\mathbb F_q$ are the finite fields of order $p$, $\bq$, $q$; $\mathbb K$ is the algebraic closure of $\mathbb F_p$;
$L_1(T), L_2(T) \in \K[T]$ are separable polynomials of degree $q$ which are $\bq$-linearized. We admit that one, but not both, is $\bq^k$-linearized, for some $k \geq 2$.
With this notation, the \textit{generalized Artin-Mumford curve} $\cX_{(L_1, L_2)}$ is the nonsingular model of the plane curve with affine equation
\begin{equation}
\label{l1l2}
\cX_{(L_1, L_2)}:\quad  L_1(X) \cdot L_2(Y)=1.
\end{equation}
The family of generalized Artin-Mumford curves is denoted by:
$$\mathcal{S}_{q|\bq}=\big\{\cX_{(L_1,L_2)} \mid  L_1(T),L_2(T) \in \K[T], \ \deg(L_1)=\deg(L_2)=q, \ L_1,L_2 { \rm \  are \ separable,}$$ $$ \ \bq{\rm -linearized, \ not \ both} \  \bq^k {\rm -linearized \ for \ any \ }k\geq2 \big\}.$$

An interesting feature of a generalized Artin-Mumford curve $\cX_{(L_1, L_2)}$ is that its genus only depends on $q$, namely $\gg(\cX_{(L_1, L_2)})=(q-1)^2$. Also, $\cX_{(L_1, L_2)}$ is an ordinary curve, that is, its genus and $p$-rank are equal.
A complete description of the automorphism group of any generalized Artin-Mumford curve is given in the following two theorems.
\begin{theorem} \label{th1}
The full automorphism group of $\cX_{(L,L)}$ is the semidirect product
\begin{equation}
\label{aut1}
\Sigma \rtimes \Delta,
\end{equation}
where
\begin{itemize}
\item $\Sigma= \{\tau_{\alpha,\beta}:(X,Y)\mapsto(X+\alpha,Y+\beta) \mid L(\alpha)=L(\beta)=0 \}$ is an elementary abelian $p$-group of order $q^2$;
\item $\Delta=\langle \theta,  \xi \rangle$ is a dihedral group of order $2(\bq -1)$, where $\theta: (X,Y) \mapsto (\lambda X, \lambda^{-1}Y)$ with $\lambda$ a primitive $(\bq-1)$-th root of unity, and $\xi : (X,Y) \mapsto (Y,X)$.
\end{itemize}
\end{theorem}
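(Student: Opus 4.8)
The plan is to determine $\aut(\cX_{(L,L)})$ over $\K$ by first establishing that the stated group $\Sigma \rtimes \Delta$ really does act on the curve, then proving it is the full group. The easy direction is the inclusion: one checks directly from \eqref{l1l2} that each $\tau_{\alpha,\beta}$ fixes the equation $L(X)L(Y)=1$ because $L(X+\alpha)=L(X)+L(\alpha)=L(X)$ when $L(\alpha)=0$ (here $L$ is additive, being $\bq$-linearized), and similarly for $\beta$; that $\theta$ works because $L(\lambda X)=\lambda L(X)$ when $\lambda^{\bq-1}=1$ — indeed $\lambda\in\mathbb F_{\bq}^*$, so $\lambda$ commutes with the $\bq$-power coefficients of $L$ — whence $L(\lambda X)L(\lambda^{-1}Y)=L(X)L(Y)=1$; and that the switch $\xi$ preserves the equation since it is symmetric in $X$ and $Y$. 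The relations $\theta^{\bq-1}=\xi^2=1$ and $\xi\theta\xi=\theta^{-1}$ are immediate, so $\Delta\cong D_{\bq-1}$, and $\Sigma$ is normalized by $\Delta$ because conjugating a translation by $\theta$ or $\xi$ yields another translation by a root of $L$; thus $\Sigma\rtimes\Delta\le\aut(\cX_{(L,L)})$, a group of order $2q^2(\bq-1)$.

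For the reverse inclusion, the strategy is to exploit the structure of $\Sigma$. Since $\cX_{(L,L)}$ is ordinary of genus $(q-1)^2>1$, its automorphism group over $\K$ is finite. The key object is $\Sigma$, an elementary abelian $p$-subgroup of order $q^2$; the quotient $\cX_{(L,L)}/\Sigma$ should be computed and shown to be rational (the function field $\K(X,Y)^\Sigma$ contains $L(X)$ and $L(Y)$, and $L(X)\cdot L(Y)=1$, so the quotient is $\mathbb P^1$ with coordinate $u=L(X)$). One then argues that $\Sigma$ is the \emph{unique} such subgroup, or at least normal in $\aut(\cX_{(L,L)})$: because $\gg=(q-1)^2$ and the Deuring–Shafarevich formula applied to the $p$-rank forces any elementary abelian $p$-group of order $q^2$ acting on an ordinary curve of this genus to have rational quotient with a controlled ramification pattern, and two such subgroups would generate a larger $p$-group whose quotient genus is too small. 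Hence $\Sigma\trianglelefteq G:=\aut(\cX_{(L,L)})$, and $G/\Sigma$ embeds into $\aut(\mathbb P^1)=\PGL(2,\K)$, fixing the branch locus of $\cX_{(L,L)}\to\mathbb P^1$.

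The heart of the argument is then to pin down $G/\Sigma$ inside $\PGL(2,\K)$. The cover $\cX_{(L,L)}\to\cX_{(L,L)}/\Sigma=\mathbb P^1$ in the coordinate $u=L(X)$ is branched exactly over $u=0$ and $u=\infty$ (the places where $X$, resp.\ $Y$, is ramified of index $q$), so $G/\Sigma$ is a subgroup of $\PGL(2,\K)$ stabilizing the set $\{0,\infty\}$, i.e.\ contained in the infinite dihedral-by-torus normalizer $\langle u\mapsto cu,\ u\mapsto c/u\rangle$. Since $G$ is finite, $G/\Sigma$ is a finite such group, hence $G/\Sigma\le \langle u\mapsto \mu u \ (\mu^N=1),\ u\mapsto c/u\rangle$ for some $N$. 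One must show $N=\bq-1$ and that the extra inversion contributes the factor $2$: an automorphism inducing $u\mapsto\mu u$ must send $L(X)\mapsto\mu L(X)$, and lifting this through the Artin–Schreier-type structure of the cover forces $X\mapsto cX$ with $L(cX)=\mu L(X)$, which (comparing leading and trailing coefficients of the additive polynomial $L$) forces $c^{\bq}=\mu$ and $c=\mu$, hence $c^{\bq-1}=1$ and $\mu=c\in\mathbb F_{\bq}^*$; similarly the inversion $u\mapsto c/u$ lifts to a map swapping the roles of $X$ and $Y$ up to scaling, forced into the coset $\Sigma\xi\Delta$. This yields $G/\Sigma\cong D_{\bq-1}=\Delta$ and hence $G=\Sigma\rtimes\Delta$.

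\medskip
\noindent\textbf{Main obstacle.} The delicate point is \emph{normality and uniqueness of $\Sigma$}: a priori $G$ could contain several elementary abelian $p$-groups of order $q^2$ (or a larger $p$-group), and ruling this out is where the ordinariness, the precise genus $(q-1)^2$, and Deuring–Shafarevich must be combined carefully — one needs that the quotient by any Sylow-$p$-related overgroup has genus $\ge 0$ forces the overgroup to equal $\Sigma$. A secondary technical nuisance is the lifting step: showing that an element of $\PGL(2,\K)$ normalizing the cover actually comes from an automorphism of $\cX_{(L,L)}$ of the stated form, which requires analysing how automorphisms interact with the additive/linearized structure of $L$ rather than just with the rational quotient. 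I expect the normality of $\Sigma$ to be the genuine crux; once it is in hand, the computation in $\PGL(2,\K)$ and the identification with $\Delta$ are essentially bookkeeping with linearized polynomials.
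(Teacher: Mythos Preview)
Your outline is sound through the easy inclusion and the reduction to analysing $G/\Sigma$ inside $\PGL(2,\K)$ \emph{once $\Sigma$ is known to be normal}. The lifting argument you sketch for bounding $G/\Sigma$ is essentially what the paper does in its Lemma~\ref{lem3} and Lemma~\ref{lem4} (the paper phrases it via Dickson's classification of finite subgroups of $\PGL(2,\K)$ rather than direct lifting of $u\mapsto\mu u$, but the content is the same, and your observation that $L(cT)=\mu L(T)$ forces $c\in\mathbb F_{\bq}^*$ is exactly the computation behind Lemma~\ref{lem3}).

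The genuine gap is precisely where you flag it, and your proposed fix does not work. You write that ``two such subgroups would generate a larger $p$-group whose quotient genus is too small,'' but two distinct elementary abelian $p$-subgroups of order $q^2$ need not generate a $p$-group at all: by Nakajima's bound (Theorem~\ref{naka}(I)) one has $q^2\cdot p>\tfrac{p}{p-2}(q^2-2q)$, so $\Sigma$ is already a full Sylow $p$-subgroup; a second such subgroup would be another Sylow, conjugate to $\Sigma$, and $\langle\Sigma,\Sigma^g\rangle$ has no reason to be a $p$-group. So Deuring--Shafarevich applied to ``the larger $p$-group'' is vacuous, and no elementary genus/rank count rules out $G\supsetneq N_G(\Sigma)$.

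The paper does \emph{not} prove normality of $\Sigma$ directly. Instead it first computes the normalizer $N_G(\Sigma)=\Sigma\rtimes\Delta$ (Lemma~\ref{lem4}), and then shows $G=N_G(\Sigma)$ by analysing a minimal normal subgroup $M$ of $G$. If $M$ is elementary abelian it is forced to be a $p$-group (Lemma~\ref{lem5}, using Lemma~\ref{terribile} of Giulietti--Korchm\'aros), whence $M\le\Sigma$ and a short argument with the action on $\Omega_x\cup\Omega_y$ gives $\Sigma\trianglelefteq G$ (Proposition~\ref{pro7}). The hard case is when $M$ is non-abelian simple: here the paper invokes the classification-based Lemma~\ref{oddcore} (odd-core-free groups of even-genus curves have $M$ in a short list: $\PSL_2$, $\PSL_3$, $\PSU_3$, $A_7$, $M_{11}$) and eliminates each possibility by comparing the Sylow $p$-structure of these groups with the elementary abelian $\Sigma$ (Proposition~\ref{prop8}). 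In short, the crux you identified is resolved in the paper by an appeal to the classification of finite simple groups, not by a Deuring--Shafarevich count; your strategy would need a genuinely new idea to avoid this.
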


\begin{theorem} \label{th2}
If $L_1 \ne L_2$, the full automorphism group of $\cX_{(L_1, L_2)}$ is the semidirect product
\begin{equation}
\label{aut2}
\Sigma \rtimes \Gamma,
\end{equation}
where
\begin{itemize}
\item $\Sigma= \{\tau_{\alpha,\beta}:(X,Y)\mapsto(X+\alpha,Y+\beta) \mid L_1(\alpha)=L_2(\beta)=0 \}$ is an elementary abelian $p$-group of order $q^2$;
\item $\Gamma=\langle \theta \rangle$ is a cyclic group of order $\bq -1$, where $\theta: (X,Y) \mapsto (\lambda X, \lambda^{-1}Y)$ with $\lambda$ a primitive $(\bq-1)$-th root of unity.
\end{itemize}
\end{theorem}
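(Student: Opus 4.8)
Throughout, set $G=\aut_{\K}(\cX)$ with $\cX=\cX_{(L_1,L_2)}$. The plan is to exploit the covering structure already in hand: $\Sigma$ is elementary abelian of order $q^2$; $\cX/\Sigma\cong\mathbb P^1$ with affine coordinate $u=L_1(X)=1/L_2(Y)$; the cover $\cX\to\cX/\Sigma$ is branched over exactly the two points $u=0$ and $u=\infty$; over $u=0$ the inertia group is $\Sigma_2=\{(X,Y)\mapsto(X,Y+\beta):L_2(\beta)=0\}$, of order $q$, fixing the $q$ points $P_\alpha=(\alpha,\infty)$ with $L_1(\alpha)=0$, and over $u=\infty$ the inertia group is $\Sigma_1=\{(X,Y)\mapsto(X+\alpha,Y):L_1(\alpha)=0\}$, fixing the $q$ points $Q_\beta=(\infty,\beta)$ with $L_2(\beta)=0$; and $\cX$ is ordinary of genus $(q-1)^2\ge 2$. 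Since $\Sigma\rtimes\Gamma\le G$, it suffices to prove $G=\Sigma\rtimes\Gamma$.

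\emph{Step 1 ($\Sigma$ is a Sylow $p$-subgroup of $G$).} If some $p$-subgroup of $G$ properly contained $\Sigma$, then, as a proper subgroup of a finite $p$-group has a strictly larger normalizer, there would be a subgroup $R$ with $\Sigma\trianglelefteq R\le G$ and $[R:\Sigma]=p$. Then $R/\Sigma$ embeds in $\aut(\cX/\Sigma)=\PGL(2,\K)$ as a group of order $p$, hence is generated by a unipotent transformation, which fixes exactly one point of $\mathbb P^1$. But $R/\Sigma$ stabilizes the two-point branch locus $\{0,\infty\}$ of $\cX\to\cX/\Sigma$, and since $p$ is odd it cannot transpose those two points, so it fixes them both — a contradiction. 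Hence $\Sigma$ is Sylow, and $[G:\Sigma]$ is prime to $p$.

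\emph{Step 2 ($N_G(\Sigma)=\Sigma\rtimes\Gamma$).} Since $\Sigma\trianglelefteq N_G(\Sigma)$ and $\Sigma$ is Sylow, $N_G(\Sigma)/\Sigma$ is a $p'$-subgroup of $\PGL(2,\K)$ stabilizing $\{0,\infty\}$, so it is cyclic (fixing both points) or dihedral (interchanging them). An element inducing $u\mapsto a/u$ on $\cX/\Sigma$ would conjugate $\Sigma_2$ onto $\Sigma_1$ and hence match the local covering behaviour over $u=0$ (governed by $L_2$) with that over $u=\infty$ (governed by $L_1$); working this out forces $L_1(T)=a\,L_2(\mu T)$ for suitable $a,\mu\in\K^*$, which is excluded by $L_1\ne L_2$. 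So $N_G(\Sigma)/\Sigma=\langle\bar g\rangle$ is cyclic, with $\bar g\colon u\mapsto\zeta u$. A preimage $g\in N_G(\Sigma)$ normalizes $\Sigma$, and also $\Sigma_2$ (the common inertia over all points above $u=0$), hence induces on $\cX/\Sigma_2\cong\mathbb P^1_X$ a Möbius map that fixes $X=\infty$ (the only pole of $L_1$) and permutes $\ker L_1$; being affine, $X\mapsto\mu X+\nu$ with $\nu\in\ker L_1$, and $L_1$ being $\bq$-linearized, it satisfies $L_1(\mu X)=\zeta L_1(X)$. Comparing coefficients gives $\mu=\zeta$ and $\zeta^{\bq^{\,i}}=\zeta$ for every exponent $\bq^{\,i}$ occurring in $L_1$, whence $\ord(\zeta)\mid\bq^{d_1}-1$, where $d_1$ is the largest integer with $L_1$ being $\bq^{d_1}$-linearized. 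The symmetric argument on $\cX/\Sigma_1\cong\mathbb P^1_Y$ yields $\ord(\zeta)\mid\bq^{d_2}-1$, with $L_2$ being $\bq^{d_2}$-linearized. As $L_1,L_2$ are not both $\bq^k$-linearized for any $k\ge 2$, we have $\gcd(d_1,d_2)=1$, so $\ord(\zeta)\mid\bq^{\gcd(d_1,d_2)}-1=\bq-1$. Therefore $|N_G(\Sigma)/\Sigma|\le\bq-1$; since $\Gamma\cap\Sigma=1$, its image in $N_G(\Sigma)/\Sigma$ has order $\bq-1$, and we conclude $N_G(\Sigma)=\Sigma\rtimes\Gamma$.

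\emph{Step 3 ($\Sigma\trianglelefteq G$; conclusion).} Once $\Sigma$ is normal in $G$ we get $G=N_G(\Sigma)=\Sigma\rtimes\Gamma$ and are done, so proving normality is the main obstacle. One approach: let $O_p(G)\le\Sigma$ be the largest normal $p$-subgroup of $G$; if $O_p(G)\ne\Sigma$, pass to the (again ordinary) quotient $\cX/O_p(G)$, on which $G/O_p(G)$ acts with trivial $p$-core while still containing the nontrivial $p$-group $\Sigma/O_p(G)$, and derive a contradiction from the classification of automorphism groups of ordinary curves with trivial $p$-core, weighed against the genus and $|\Sigma|$. An alternative is to note $n_p:=[G:N_G(\Sigma)]=[G:\Sigma\rtimes\Gamma]\equiv 1\pmod p$, bound $|G|$ from above through the quotient $\cX\to\cX/G$ — which is dominated by $\cX\to\cX/(\Sigma\rtimes\Gamma)\cong\mathbb P^1$, branched over at most two points, so that Riemann–Hurwitz together with the already-known ramification is very restrictive — and then rule out the finitely many remaining values of $n_p$. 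In all of this the ordinariness of $\cX$ is essential: it is what forces the $p$-part of $|G|$ to equal $q^2$ (via the two-branch-point structure of $\cX/\Sigma$), while controlling the prime-to-$p$ part of $G$ and the conjugation among its Sylow $p$-subgroups — that is, the normality of $\Sigma$ — is the step I expect to be hardest.
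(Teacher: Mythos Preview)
Steps~1 and~2 are close to the paper's argument (its Lemma~3.1(v) via Nakajima's bound, and Lemma~3.9). One small slip in Step~2: the bare hypothesis $L_1\ne L_2$ does not by itself exclude a relation $L_1(T)=a\,L_2(\mu T)$ (take $L_1=cL_2$ with $c\ne1$), so your dismissal of the dihedral case is incomplete as written; the paper instead pins any swapping involution down to $(x,y)\mapsto(\pm y,\pm x)$ via the divisors of $x$ and $y$ and then checks directly. Your computation $\mathrm{ord}(\zeta)\mid\bq-1$ from the $\bq$-linearization in the cyclic case is a pleasant variant of the paper's Lemma~3.3.

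The substantive gap is Step~3, which you yourself leave as a sketch. Neither proposed approach is close to a proof: there is no ready-made ``classification of automorphism groups of ordinary curves with trivial $p$-core'' to invoke, and the combination of $n_p\equiv1\pmod p$ with a Riemann--Hurwitz bound on $|G|$ is not developed far enough to force $n_p=1$. The paper's route here is of a quite different nature and is where the real work lies. One takes a minimal normal subgroup $M\trianglelefteq G$. If $M$ is elementary abelian, a case analysis (using the Giulietti--Korchm\'aros lemma on solvable groups of large order, together with the already-computed $N_G(\Sigma)$) forces $M$ to be a $p$-group, hence $M\le\Sigma$; from this one shows that every $\sigma\in G$ preserves the set $\Omega_1\cup\Omega_2$, whence the stabilizers in any Sylow $p$-subgroup coincide with those in $\Sigma$, so $\Sigma$ is the unique Sylow $p$-subgroup and $\Sigma\trianglelefteq G$. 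If no minimal normal subgroup is elementary abelian, then by Feit--Thompson $G$ is odd-core-free, and the paper invokes the classification (ultimately relying on CFSG) of odd-core-free automorphism groups of curves of \emph{even} genus with a nonabelian simple socle --- the list $\PSL(2,d^k)$, $\PSL(3,d^k)$, $\PSU(3,d^k)$, $A_7$, $M_{11}$ --- and eliminates each candidate by comparing its Sylow $p$-structure and normalizers with what is already known about $\Sigma$ and $N_G(\Sigma)$. Your proposal supplies no substitute for this machinery, and without it the proof is incomplete.
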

%The curve $\cX_{(L_1,L_2)}$ turns out to be ordinary of genus $(q-1)^2$; see Lemma \ref{lem1}.
For $\bq=q$, the size of $\aut(\cX_{(L_1,L_2)})$ is approximately $2(\gg(\cX_{(L_1,L_2)})+1)^{3/2}$. Since the groups given in Theorems \ref{th1} and \ref{th2} are solvable, $\cX_{(L_1,L_2)}$ attains, up to the constant, the bound given in \cite{KM}.

Our main result is that $\aut(\cX_{(L_1,L_2)})$ together with $\gg(\cX_{(L_1,L_2)})$ characterize the curves in $\mathcal{S}_{q \mid \bq}$. This result can be viewed as a generalization of \cite[Theorem 1.1]{AK} on Artin-Mumford curves.

\begin{theorem} \label{th3}
Let $\cX$ be a $($projective, non-singular, geometrically irreducible, algebraic$)$ curve of genus $\gg=(q-1)^2$ defined over $\K$. If $\aut(\cX)$ contains an elementary abelian subgroup $E_{q^2}$ of order ${q^2}$, then $\cX$ is birationally equivalent over $\K$ to some $\cX_{(L_1,L_2)} \in \mathcal{S}_{q \mid \bq}$, where $\bq$ is the largest power of $p$ such that $\aut(\cX)$ contains a cyclic subgroup $C_{\bq-1}$ of order $\bq-1$.
\end{theorem}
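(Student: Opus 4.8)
The plan is to analyse the quotient of $\cX$ by the elementary abelian group $E:=E_{q^2}$, to show that this quotient is $\mathbb P^1$ ramified at exactly two branch points of a very rigid kind, and then to read off the equation $L_1(X)L_2(Y)=1$. First I would set $\cY:=\cX/E$ and let $Q_1,\dots,Q_r$ be the branch points of $\pi\colon\cX\to\cY$, with inertia groups $H_i\le E$ of orders $e_i$ (well defined since $E$ is abelian) and different exponents $d_i$. All ramification is wild, so $d_i\ge 2(e_i-1)$, and Riemann--Hurwitz reads $\sum_i d_i/e_i=4-\tfrac4q-2\gg(\cY)$; non-negativity of the left side forces $\gg(\cY)\le1$. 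If $\gg(\cY)=1$ the same estimate gives $r=1$; then $\cX/H_1\to\cY$ is unramified of degree $q^2/e_1$, so $\gg(\cX/H_1)=1$, and Riemann--Hurwitz for $\cX\to\cX/H_1$ together with integrality of the different (using $q$ odd) yields both $e_1\ge q$ and $e_1\le q/2$, a contradiction. Hence $\cY\cong\mathbb P^1$; since $\mathbb P^1$ is simply connected the cover is ramified ($r\ge1$), the same estimate gives $r\le2$, and simple connectivity also forces $E=\langle H_1,\dots,H_r\rangle=H_1\cdots H_r$.

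Next I would determine the ramification exactly. If $r=1$ then $H_1=E$, so $E$ is totally ramified over $Q_1$ with $d=4q(q-1)$. Writing the ramification filtration as $E=E^{(0)}\supsetneq E^{(1)}\supsetneq\cdots\supsetneq E^{(s)}=1$ with breaks $0=b_0<b_1<\cdots<b_s$ and $|E^{(k)}|=p^{n_k}$ ($n_0=2h$, $q=p^h$), the Hasse--Arf theorem (applicable since $E$ is abelian) forces $p^{\,2h-n_k}\mid(b_{k+1}-b_k)$, and the Artin--Schreier normal form gives $p\nmid b_1$. Setting $b_{k+1}-b_k=c_kp^{\,2h-n_k}$ one computes $d=q^2\sum_kc_k-\sum_kc_kp^{\,2h-n_k}$, hence $\sum_kc_kp^{\,2h-n_k}=q^2\bigl(\sum_kc_k-4\bigr)+4q\equiv0\pmod p$; but this sum is $\equiv c_0=b_1\pmod p$, contradicting $p\nmid b_1$. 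So $r=2$. Now $E=H_1H_2$, whence $e_1e_2=q^2|H_1\cap H_2|\ge q^2$; assuming $e_1\ge e_2$ gives $e_1=p^a$ with $a\ge h$. The same ``first break is prime to $p$'' fact applied to $H_1$ and $H_2$ gives $p\nmid d_1$ and $p\nmid d_2$, and from $d_1e_2+d_2e_1=4q\,|H_1\cap H_2|\,(q-1)$, after dividing by $e_2$, reducing modulo $p$, and using $d_i\ge2(e_i-1)$, one gets $e_1=e_2=q$, $d_1=d_2=2(q-1)$ and $H_1\cap H_2=1$. Thus $E=\Sigma_1\times\Sigma_2$ with $\Sigma_i:=H_i$ elementary abelian of order $q$, and $\pi$ is weakly ramified at both branch points.

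Finally I would reconstruct the curve. Since $\Sigma_1\cap\Sigma_2=1$ and the ramification is weak, Riemann--Hurwitz for $\cX\to\cX/\Sigma_1$ (totally ramified of degree $q$ with different $2(q-1)$ over the $q$ points above $Q_1$) gives $\gg(\cX/\Sigma_1)=0$, and symmetrically $\gg(\cX/\Sigma_2)=0$; write $\cX/\Sigma_1=\mathbb P^1$ with coordinate $Y$ and $\cX/\Sigma_2=\mathbb P^1$ with coordinate $X$. The group $\Sigma_2\cong E/\Sigma_1$ acts on $\mathbb P^1_Y$ with quotient $\cX/E\cong\mathbb P^1$, totally ramified over $Q_2$, and fixes the unique point above $Q_2$; as a $p$-subgroup of $\PGL_2(\K)$ fixing a point it is, in a suitable coordinate, the group of translations $Y\mapsto Y+\beta$ with $\beta$ ranging over an $\mathbb F_p$-subspace $W_2\subset\K$ of size $q$, and likewise for $\Sigma_1$ on $\mathbb P^1_X$. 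After normalizing coordinates the two maps to $\cX/E\cong\mathbb P^1$ become $t=L_1(X)$ and $t=1/L_2(Y)$, where $L_i(T)=\prod_{\beta\in W_i}(T-\beta)$ is separable and additive of degree $q$. Since $\Sigma_1\times\Sigma_2=E$, Galois theory gives $\K(\cX)=\K(X,Y)$, and $L_1(X)=t=1/L_2(Y)$ yields $L_1(X)L_2(Y)=1$, i.e. $\cX$ is birational over $\K$ to $\cX_{(L_1,L_2)}$. Letting $\bq$ be the largest power of $p$ for which $L_1$ and $L_2$ are both $\bq$-linearized, one has $\cX_{(L_1,L_2)}\in\mathcal S_{q\mid\bq}$, and Theorems \ref{th1} and \ref{th2} compute $\aut(\cX)$, in particular showing that $\bq$ is precisely the largest power of $p$ with $C_{\bq-1}\le\aut(\cX)$.

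The main obstacle is the middle step: the ramification of $\pi$ must be determined completely --- a single totally ramified point (which would force $\cX$ to have $p$-rank zero) and inertia groups of order $\ne q$ must both be excluded --- \emph{without} knowing a priori that $\cX$ is ordinary, since ordinariness only emerges afterwards from the ramification data. The two congruence arguments above, resting on Hasse--Arf integrality of the breaks and the fact that the first break of an elementary abelian $p$-cover is prime to $p$, are exactly what make these exclusions go through; once they are in place, Steps~1 and~3 are routine.
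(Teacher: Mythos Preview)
Your Step~2 contains two errors that break the argument. In the $r=1$ elimination, your formula $d=q^2\sum_k c_k-\sum_k c_kp^{2h-n_k}$ omits the contribution of the $i=0$ ramification group: correctly $d=(b_1+1)(q^2-1)+\sum_{k\ge1}(b_{k+1}-b_k)(p^{n_k}-1)$, which is your expression plus $q^2-1$. Equating to $4q(q-1)$ then gives $b_1\equiv-1\pmod p$, which is perfectly compatible with $p\nmid b_1$, so no contradiction arises. (Together with the crude bound $(b_1+1)(q^2-1)\le d$, giving $b_1\le2$, this does dispose of $p\ge5$; but for $p=3$ you are left with $b_1=2$ and a further case analysis is needed.) In the $r=2$ step, the assertion $p\nmid d_i$ is false: already for the cyclic cover $y^p-y=x^{p-1}$ one has $d=p(p-1)$. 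What coprimality of the breaks to $p$ actually yields is $d_i\equiv-(b^{(i)}_{\mathrm{last}}+1)\pmod p$, and nothing prevents $b^{(i)}_{\mathrm{last}}\equiv-1\pmod p$. Hence ``divide by $e_2$ and reduce mod $p$'' does not force $e_1=e_2=q$: a configuration such as $(e_1,e_2)=(pq,q/p)$ with $H_1\cap H_2=1$ satisfies $E=H_1H_2$ and is consistent with the Riemann--Hurwitz identity $d_1/e_1+d_2/e_2=4-4/q$.

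The paper avoids both problems by bringing in the Deuring--Shafarevich formula alongside Riemann--Hurwitz (Proposition~\ref{prop44}(B)): it first excludes $p$-rank $0$ and $1$ by separate arguments, then for $\gamma\ge2$ uses Deuring--Shafarevich to force exactly two short orbits and $\bar\gg=0$, and finally shows (via a Hasse--Arf type divisibility) that the higher-ramification tails $c_j=\sum_{i\ge2}(|G^{(i)}_{P_j}|-1)$ are multiples of $p$, which is what squeezes out $\ell_1=\ell_2=q$. Your Steps~1 and~3 are correct; in fact Step~3 is more direct than the paper's detour through the hyperelliptic quotient $\cY_{L,a}$ and the tangent-line computation at the end of Section~\ref{sec4}. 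But the two mod-$p$ shortcuts in Step~2 must be replaced either by the $p$-rank input or by a considerably more careful divisibility analysis.
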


In the case $L_1=L_2$, the assumption on the genus can be weakened under a stronger assumption on the automorphism group, as follows.

\begin{theorem} \label{th4}
Let $\cX$ be a curve of genus $\gg\leq(q-1)^2$ defined over $\K$. If $\aut(\cX)$ contains a semidirect product $E_{q^2}\times(C_2\times C_2)$ (where $E_{q^2}$ is elementary abelian of order $q^2$ and $C_2\times C_2$ is a Klein four-group), then $\cX$ is birationally equivalent over $\K$ to some $\cX_{(L,L)} \in \mathcal{S}_{q \mid \bq}$, where $\bq$ is the largest power of $p$ such that $\aut(\cX)$ contains a cyclic subgroup $C_{\bq-1}$ of order $\bq-1$.
\end{theorem}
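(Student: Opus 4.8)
The plan is to reduce Theorem~\ref{th4} to the already-proven Theorem~\ref{th1} by showing that the hypotheses force $\gg=(q-1)^2$ and that the curve carries the same structural data as $\cX_{(L,L)}$. Write $E=E_{q^2}$ and let $V=C_2\times C_2$ denote the Klein four-subgroup, so that $G:=E\rtimes V\leq\aut(\cX)$. First I would analyze the quotient $\cX\to\cX/E$. Since $E$ is a $p$-group acting on $\cX$, I apply the Deuring--Shafarevich formula to control the $p$-rank, and then use the genus bound $\gg\leq(q-1)^2$ together with the order $|E|=q^2$: as in the proof of Theorems~\ref{th1}--\ref{th2}, one expects $\cX/E\cong\mathbb P^1$ and the cover $\cX\to\mathbb P^1$ to be ramified over exactly two places, each totally ramified, with the different computed so that Riemann--Hurwitz forces $\gg\geq(q-1)^2$; combined with the hypothesis this gives $\gg=(q-1)^2$ and pins down the ramification structure. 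The key point is that $E$ elementary abelian of order $q^2$ acting with this genus behaves like a generalized Artin--Schreier cover in two "independent" directions.

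Next I would use the Klein four-group $V$ acting on $\cX/E\cong\mathbb P^1$. Each of the three involutions in $V$ induces either the identity or an involution of $\mathbb P^1$; analyzing how $V$ permutes the two branch places of $\cX\to\mathbb P^1$, one involution (call it $\xi$) must swap the two branch points while a complementary involution $\eta$ fixes them. Conjugation by $V$ acts on $E\cong\mathbb F_q\times\mathbb F_q$; the fixed branch-point structure together with $\eta$ forces $E$ to split $G$-equivariantly as a product $\Sigma_1\times\Sigma_2$ of two copies of an additive group of size $q$, with $\xi$ interchanging the two factors. This is exactly the situation realized by the two "partial" Artin--Schreier covers $L_1(X)=\ast$ and $L_2(Y)=\ast$ in \eqref{l1l2}. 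Since $\xi$ conjugates $\Sigma_1$ to $\Sigma_2$, the two linearized polynomials that arise as the respective Artin--Schreier data must coincide up to the identification, i.e. $L_1=L_2=:L$, which is why the conclusion lands in the $L_1=L_2$ case rather than the general $\cX_{(L_1,L_2)}$.

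With the branch structure and the product decomposition $E=\Sigma_1\times\Sigma_2$ in hand, I would reconstruct an explicit model: choosing a rational parameter $t$ on $\cX/E$ so that the two branch points are $t=0,\infty$, the subcover $\cX/\Sigma_2\to\mathbb P^1$ is a $\bq$-linearized Artin--Schreier cover given by $L(X)=t$ for some separable $\bq$-linearized $L$ of degree $q$ (the linearized shape of the defining polynomial is exactly what an elementary abelian $p$-group of translations produces, as in the analysis preceding Theorem~\ref{th1}), and symmetrically $\cX/\Sigma_1\to\mathbb P^1$ is $L(Y)=c/t$; multiplying gives $L(X)L(Y)=c$, and rescaling $X$ absorbs $c$, yielding a birational equivalence $\cX\cong\cX_{(L,L)}$ over $\K$. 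Finally I identify $\bq$: by Theorem~\ref{th1} the full automorphism group of $\cX_{(L,L)}$ is $\Sigma\rtimes\Delta$ with $\Delta$ dihedral of order $2(\bq-1)$, whose largest cyclic subgroup has order $\bq-1$; so the $\bq$ produced is forced to be the largest power of $p$ with $C_{\bq-1}\leq\aut(\cX)$, matching the statement.

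The main obstacle I anticipate is the first step: extracting from the single inequality $\gg\leq(q-1)^2$ (rather than an equality) enough rigidity to conclude that $\cX/E\cong\mathbb P^1$ with precisely two totally ramified places and the exact different exponents, since a priori $E$ could act with a more complicated ramification divisor or over a base curve of positive genus, and one must rule these out using that $E$ is elementary abelian of rank $2$ together with the presence of the normalizing Klein four-group --- in particular, showing the $V$-action is incompatible with any configuration other than the expected one. The equivariant splitting $E=\Sigma_1\times\Sigma_2$ and the forced coincidence $L_1=L_2$ are the other delicate points, but these should follow once the ramification geometry is nailed down, by transporting the argument used to identify $\aut(\cX_{(L,L)})$ in Theorem~\ref{th1}.
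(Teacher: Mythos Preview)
Your overall strategy is sound and would yield a valid proof, but it differs substantially from the paper's route. The paper does not pass through the rational quotients $\cX/\Sigma_1$ and $\cX/\Sigma_2$ at all. Instead, after establishing (via Proposition~\ref{prop44}, Case~(A)) that $\gg=(q-1)^2$, $\cX$ is ordinary, and $E_{q^2}$ has exactly two short orbits of length $q$ with distinct stabilizers $M_1,M_2$, the paper chooses a \emph{third} subgroup $T\leq E_{q^2}$ of order $q$ acting freely on $\cX$, so that $\cX/T$ has genus $q-1$; Theorem~\ref{vanvan} then identifies $\cX/T$ with a hyperelliptic curve $\cY_{L,a}:L(y)=ax+1/x$. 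A second quotient $\cX/M$ (Proposition~\ref{prop410}) gives an equation $\tilde L_1(z)=b+1/x$, and the compositum of the two function fields is manipulated into the form $L_1(z')L_2(y-z')=a$. Finally---and this is the key contrast---the paper deduces $L_1=L_2$ \emph{a posteriori}: once $\cX\cong\cX_{(L_1,L_2)}$ is known, Theorem~\ref{th2} says $\aut(\cX_{(L_1,L_2)})$ contains no Klein four-group when $L_1\neq L_2$, so the hypothesis forces $L_1=L_2$.

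Your approach of taking the two rational quotients $\cX/M_i\to\cX/E$ directly, writing each as $L_i(\cdot)=t^{\pm1}$, and multiplying to obtain $L_1(X)L_2(Y)=c$ is more direct for Theorem~\ref{th4} and avoids the intermediate hyperelliptic curve entirely. The paper's detour through $\cY_{L,a}$ buys a unified treatment of Theorems~\ref{th3} and~\ref{th4} (since Case~(B) has no $V$ available), while your route exploits $V$ from the start. Two points need tightening: first, the two branch fibers are not ``totally ramified'' in $\cX\to\cX/E$---the ramification index is $q$, not $q^2$---and getting exactly $\ell_1=\ell_2=q$ from $\gg\leq(q-1)^2$ requires the $V$-action in the way Proposition~\ref{prop44}(A) uses it. Second, the polynomial you extract is only $p$-linearized a priori, not $\bar q$-linearized; and the implication ``$\xi$ conjugates $\Sigma_1$ to $\Sigma_2$, hence $L_1=L_2$'' is not immediate: tracking $\xi$ through the diagram gives $L_1(T)=\lambda\,L_2(\alpha T)$ for some constants, which shows $\cX_{(L_1,L_2)}\cong\cX_{(L,L)}$ birationally rather than $L_1=L_2$ literally. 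With those details supplied, your argument goes through.
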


In Section \ref{sec2}, preliminary results on automorphism groups of ordinary curves and curves of even genus are collected. In Section \ref{sec3}, we give the proofs of Theorems \ref{th1} and \ref{th2}, doing so we also show the relevant properties of generalized Artin-Mumford curves; see Lemma \ref{lem1}. The proof of Theorems \ref{th3} and \ref{th4} is given in Section \ref{sec4} where additional classification results of independent interest are found, as well. Here we only mention that Theorem \ref{vanvan} gives the following characterization.

\begin{theorem} \label{th5}
Let $\cY$ be a curve of genus $q-1$ defined over $\K$ whose automorphism group $\aut(\cY)$ contains an elementary abelian subgroup $E_q$ of order $q$.
Then one of the following holds.
\begin{itemize}
\item[(I)] $\cY$ is birationally equivalent over $\K$ to the curve $\cY_{L,a}$ with affine equation
$$
 L(y)=ax+\frac{1}{x},
$$
for some $a \in \K^*$ and $L(T) \in \K[T]$ a separable $p$-linearized polynomial of degree $q$. For the curve $\cY_{L,a}$ the following properties hold:
\begin{itemize}
\item[(i)] $\cY_{L,a}$ is ordinary and hyperelliptic;
\item[(ii)] $\cY_{L,a}$ has exactly $2q$ Weierstrass places, which are the fixed places of the hyperelliptic involution $\mu$.
\item[(iii)] The full automorphism group $\aut(\cY_L)$ of $\cY_{L,a}$ has order $4q$ and is a direct product $Dih(E_q)\times\langle\mu\rangle$.
\end{itemize}
\item[(II)]  $p\ne3$ and $\cY$ is birationally equivalent over $\K$ to the curve $\cZ_{\tilde L,b}$ with affine equation
$$
\tilde L(y)=x^3+bx,
$$
for some $a\in\K$ and $\tilde L(T)\in\K[T]$ a separable $p$-linearized polynomial of degree $q$.
For the curve $\cZ_{\tilde L,b}$ the following properties hold:
\begin{itemize}
\item[(i)] $\cZ_{\tilde L,b}$ has zero $p$-rank;
\item[(ii)] $\aut(\cZ_{\tilde L,b})$ contains a generalized dihedral subgroup $Dih(E_q)=E_q\rtimes \langle \nu \rangle$.
\end{itemize}
\end{itemize}
\end{theorem}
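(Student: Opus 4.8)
The plan is to study the quotient map $\pi\colon\cY\to\bar\cY:=\cY/E_q$ and to recover $\cY$ from the genus, $p$-rank and ramification data of $\pi$. Since $\gg(\cY)=q-1$ and $|E_q|=q$, the Riemann--Hurwitz formula $2(q-1)-2=q(2\gg(\bar\cY)-2)+\deg\mathfrak{d}$ with $\deg\mathfrak{d}\ge0$ forces $\gg(\bar\cY)\in\{0,1\}$. I would exclude $\gg(\bar\cY)=1$ as follows: then $\deg\mathfrak{d}=2q-4$, whereas a branch point of $\pi$ with inertia group of order $p^{j}$ ($j\ge1$) contributes at least $2q(1-p^{-j})\ge\tfrac{4q}{3}$ to $\deg\mathfrak{d}$, so $\pi$ has at most one branch point; that branch point must be totally ramified, for otherwise its local different exponent $\frac{2q-4}{q/p^{j}}$ is not an integer ($q$ being odd), but then that exponent equals $2q-4<2(q-1)$, contradicting the bound $d_Q\ge2(q-1)$ valid at any place totally (hence wildly) ramified under $E_q$. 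Hence $\bar\cY\cong\P^1$; fixing a coordinate $x$ on it, the standard normal form of an elementary abelian $p$-cover of the line gives $\K(\cY)=\K(x,y)$ with $L(y)=R(x)$, where $L\in\K[T]$ is a separable $p$-linearized polynomial of degree $q$ and $R\in\K(x)$, the branch points of $\pi$ being exactly the poles of $R$.

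Next I would pin down the ramification of $\pi$. With $\gg(\bar\cY)=0$, Riemann--Hurwitz gives $\deg\mathfrak{d}=4(q-1)$, and the Deuring--Shafarevich formula gives $\gamma(\cY)=1-q+\sum_{\bar P}q\bigl(1-p^{-j_{\bar P}}\bigr)$ for the $p$-rank. Each branch point contributing at least $\tfrac{4q}{3}$, there are at most two; a congruence argument modulo $p$ (using $q$ odd and $p>2$, together with the shape $d_Q=(b_1+1)(q-1)+\sum_{k\ge2}(b_k-b_{k-1})(|G_{b_{k-1}+1}|-1)$ of the local different exponent in terms of the ramification breaks $b_1<\cdots<b_t$) then forces every branch point to be totally ramified, and the Hasse--Arf theorem (integrality of the upper ramification breaks of the abelian cover $\pi$) eliminates every configuration with more than one break. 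Exactly two possibilities survive: either $R$ has two simple poles, in which case $\gamma(\cY)=q-1$ (so $\cY$ is ordinary); or $R$ has a single pole of order $3$, which forces $p\ne3$ and gives $\gamma(\cY)=0$. Thus cases (I) and (II) of the statement correspond precisely to $\cY$ being ordinary or not.

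Now I would put $R$ in normal form, using the freedom to act on $x$ by $\PGL_2(\K)$ and to replace $R$ by $R-L(g)$ for any $g\in\K(x)$ (which only replaces $y$ by $y-g$). In the two-pole case, move the poles to $0$ and $\infty$, absorb the constant term into $L(\K)$, and rescale $x$ to reach $R(x)=ax+\tfrac1x$ with $a\ne0$ (if $a=0$ then $R$ would have a single pole); this identifies $\cY$ with $\cY_{L,a}$. In the one-pole case, move the pole to $\infty$ (so $R$ is a cubic), absorb the constant term, remove the degree-$2$ term by the substitution $x\mapsto x-\tfrac{a_2}{3a_3}$ (here $p\ne3$ is essential, and this is where the hypothesis is used), and rescale $x$ to make the leading coefficient $1$; this leaves $R(x)=x^3+bx$ and identifies $\cY$ with $\cZ_{\tilde L,b}$.

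It then remains to verify the listed properties. For $\cY_{L,a}$: ordinarity was already obtained; clearing denominators gives the model $ax^2-L(y)x+1=0$, a double cover of the $y$-line with deck involution $\mu\colon(x,y)\mapsto(\tfrac1{ax},y)$, and since $x+\tfrac1{ax}=\tfrac1aL(y)$ the quotient $\cY_{L,a}/\langle\mu\rangle$ is rational, so $\mu$ is the hyperelliptic involution and its $2q=2\gg+2$ fixed places are the Weierstrass places; the translations $\tau_{\ga,0}$ with $L(\ga)=0$ together with $\nu\colon(x,y)\mapsto(-\tfrac1{ax},-y)$ (which inverts these translations) generate $Dih(E_q)$, $\mu$ is central, so $\aut(\cY_{L,a})\supseteq Dih(E_q)\times\langle\mu\rangle$ of order $4q$, and for the reverse inclusion one uses that the hyperelliptic involution $\mu$ is normal, so $\aut(\cY_{L,a})/\langle\mu\rangle$ embeds in $\PGL_2(\K)$ as the stabilizer of the $2q$ branch places, which on the $y$-line form a union of two cosets of $\ker L$ interchanged by $y\mapsto-y$, whose $\PGL_2$-stabilizer is readily checked to be exactly $Dih(E_q)$. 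For $\cZ_{\tilde L,b}$: the $p$-rank is $0$ by the Deuring--Shafarevich count above, and the translations by $\ker\tilde L$ together with $\nu\colon(x,y)\mapsto(-x,-y)$ (which inverts them) generate a generalized dihedral subgroup $Dih(E_q)=E_q\rtimes\langle\nu\rangle$ of $\aut(\cZ_{\tilde L,b})$. The main obstacle is the ramification analysis of the second paragraph: deducing from $\gg(\cY)=q-1$ that $\pi$ has at most two, totally ramified, branch points with breaks $(1,1)$ or $(3)$ requires a delicate simultaneous use of Riemann--Hurwitz, Deuring--Shafarevich, Hasse--Arf and $p$-adic congruences, with several subcases disposed of by hand; a secondary difficulty is the last step of the $\aut(\cY_{L,a})$ computation, namely showing that the $\PGL_2(\K)$-stabilizer of the branch locus is no larger than $Dih(E_q)$.
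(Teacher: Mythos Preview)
Your strategy is essentially the paper's: quotient by $E_q$, show the quotient is $\mathbb{P}^1$, realize $\cY$ as a generalized Artin--Schreier cover $L(y)=R(x)$, pin down the pole structure of $R$ by a Riemann--Hurwitz/Deuring--Shafarevich analysis, and then normalize. The two cases you isolate (two simple poles versus one triple pole, equivalently ordinary versus $p$-rank zero) are exactly the paper's cases $\lambda=2$ and $\lambda=1$ for the number of fixed places of $E_q$.

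There are two places where your execution differs from the paper's and where you should be a bit more careful. First, for the ramification analysis the paper does not invoke Hasse--Arf but rather organizes the argument by casing on $\lambda$ and appealing to \cite[Lemma~11.75(v)]{HKT}, which encodes the needed congruence on the higher ramification filtration; your Hasse--Arf plan is a legitimate substitute, and in fact once you know there are at most two branch points, simple inequalities (each totally ramified point contributes at least $2(q-1)$, each partially ramified one at least $2q(1-1/p)$) already force the two configurations $(1,1)$ or $(3)$ without much subcase work. Second, your computation of $\aut(\cY_{L,a})$ via ``the $\PGL_2(\K)$-stabilizer of the $2q$ branch points is readily checked to be $Dih(E_q)$'' is in the right spirit but not quite routine: the paper instead classifies the image $G'=\aut(\cY_{L,a})/\langle\mu\rangle$ inside $\PGL_2(\K)$ using Dickson's list, shows that in the Borel-type case $E_q\rtimes C_m$ one must have $m=2$ (because the induced cyclic group on $\bar\cY=\cY/E_q$ must simultaneously preserve two disjoint pairs of places), and then rules out $\PSL_2(q)$ and $\PGL_2(q)$ since those would contain $E_q\rtimes C_{q-1}$. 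The case $q=3$ needs a separate short argument, so ``readily checked'' understates it. Apart from these two points, your outline matches the paper's proof.
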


Theorem \ref{th5} provides a generalization of \cite[Proposition (2.2) and Corollary (2.3)]{vandervander}. 

Our proof uses function field theory, especially the Hurwitz genus formula and the Deuring-Shafarevich formula, together with deeper results on finite groups, especially the classification theorem on finite non-abelian simple groups whose Sylow $2$-subgroups are dihedral or semidihedral. In doing so we adopt the approach worked out by Giulietti and Korchm\'aros in \cite{GK2016}.

\section{Background and Preliminary Results}\label{sec2}

We keep the notation used in Introduction. Also, $\cX$ is a (projective, non-singular,
geometrically irreducible, algebraic) curve of genus $\gg\geq 2$ defined over $\mathbb{K}$, $\K(\cX)$ is the function field of $\cX$, and $\aut(\cX)$ is its full automorphism group over $\K$.

For a subgroup $G$ of $\aut(\cX)$, let $\bar \cX$ denote a non-singular model of $\K(\cX)^G$, that is,
a curve with function field $\K(\cX)^G$, where $\K(\cX)^G$ consists of all elements of $\K(\cX)$ fixed by every element in $G$. Usually, $\bar \cX$ is called the
quotient curve of $\cX$ by $G$ and denoted by $\cX/G$. The field extension $\K(\cX)|\K(\cX)^G$ is Galois of degree $|G|$.

Let $\Phi$ be the cover of $\cX|\bar{\cX}$ where $\bar{\cX}=\cX/G$. A place $P$ of $\K(\cX)$ is a ramification place of $G$ if the stabilizer $G_P$ of $P$ in $G$ is nontrivial; the ramification index $e_P$ is $|G_P|$. The $G$-orbit of $P$ in $\K(\cX)$ is the subset
$o=\{R\mid R=g(P),\, g\in G\}$ of the set of the places of $\K(\cX)$, and it is {\em long} if $|o|=|G|$, otherwise $o$ is {\em short}. For a place $\bar{Q}$, the $G$-orbit $o$ lying over $\bar{Q}$ consists of all places $P$ of $\K(\cX)$ such that $\Phi(P)=\bar{Q}$. If $P\in o$ then $|o|=|G|/|G_P|$ and hence $P$ is a ramification place if and only if $o$ is a short $G$-orbit. If every non-trivial element in $G$ is fixed--point-free on the set of the places of $\K(\cX)$, the cover $\Phi$ is unramified. For a non-negative integer $i$, the $i$-th ramification group of $\cX$
at $P$ is denoted by $G_P^{(i)}$ and defined to be
$$G_P^{(i)}=\{\alpha\in G_P\mid v_P(\alpha(t)-t)\geq i+1\}, $$ where $t$ is a local parameter at
$P$; see \cite{Sti}. Here $G_P^{(0)}=G_P$.

Let $\bar \gg$ be the genus of the quotient curve $\bar{\cX}=\cX/G$. The Hurwitz
genus formula \cite[Theorem 7.27]{HKT}  gives the following equation
    \begin{equation}
    \label{eq1}
2\gg-2=|G|(2\bar{\gg}-2)+\sum_{P\in \cX} d_P,
    \end{equation}
    where the different $d_P$ at $P$ is given by
\begin{equation}
\label{eq1bis}
d_P= \sum_{i\geq 0}(|G_P^{(i)}|-1),
\end{equation}
see \cite[Theorem 11.70]{HKT}. Let $\gamma$ and $\bar\gamma$ be the $p$-ranks of $\cX$ and $\bar\cX$ respectively.
The Deuring-Shafarevich formula \cite[Theorem 11.62]{HKT} states that
\begin{equation}
    \label{eq2deuring}
    %modifica 23 marzo
\gamma-1={|G|}(\bar{\gamma}-1)+\sum_{i=1}^k (|G|-\ell_i)
    \end{equation}
    %%modifica 2 marzo 2009
where $\ell_1,\ldots,\ell_k$ are the sizes of the short orbits of $G$.

A subgroup $G$ of $\aut(\cX)$ is {\em{tame}} if $\gcd(p,|G|)=1$, otherwise $G$ is {\em{non-tame}}.
%An orbit $o$ of $G$ is {\em{tame}} if $G_P$ is a $p'$-group for $P\in o$.
The stabilizer $G_P$ of a place $P\in \cX$ in $G$ is a semidirect product $G_P=Q_P\rtimes U$ where the normal subgroup $Q_P$ is a $p$-group while the complement $U$ is a tame cyclic group; see \cite[Theorem 11.49]{HKT}.

The following result is due to Nakajima; see \cite[Theorems 1, 2 and 3]{N} and \cite[Lemma 11.75]{HKT}.

\begin{theorem} \label{naka} Let $\cX$ be a curve with $\gg(\cX) \geq 2$ defined over an algebraically closed field of characteristic $p \geq 3$, and $H$ be a Sylow $p$-subgroup of $\aut(\cX)$. Then the following hold.
\begin{itemize}
\item[(I)] When $\gamma(\cX) \geq 2$, we have
$$|H| \leq \frac{p}{p-2} (\gamma(\cX)-1) \leq \frac{p}{p-2} (\gg(\cX)-1).$$
\item[(II)] If $\cX$ is ordinary $($i.e. $\gg(\cX)=\gamma(\cX))$ and $G \leq \aut(\cX)$, then $G_P^{(2)}=\{1\}$ and $G_P^{(1)}$ is elementary abelian, for every $P \in \cX$.
\item[(III)] If $\cX$ is ordinary then $|\aut(\cX)| \leq 84(\gg(\cX)-1)\gg(\cX)$.
\item[(IV)] If $\gamma(\cX)=1$ then $H$ is cyclic.
\end{itemize}
\end{theorem}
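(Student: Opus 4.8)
The plan is to obtain all four statements from the two fundamental formulas recalled above, the Hurwitz genus formula \eqref{eq1} and the Deuring--Shafarevich formula \eqref{eq2deuring}, combined with the basic inequality $\gamma(\cX)\le\gg(\cX)$ (equality being ordinariness) and the stabilizer decomposition $G_P=Q_P\rtimes U$. I would prove \emph{(II)} first, since it controls the differents entering \emph{(III)}. The reduction is to $G\cong\Z/p$: a ramified place $P$ is then totally and wildly ramified with a single jump $t_P$ in the lower filtration, so by \eqref{eq1bis} its different is $(t_P+1)(p-1)$ and its orbit is $\{P\}$. Writing \eqref{eq1} and \eqref{eq2deuring} for this action, inserting $\gamma(\cX)=\gg(\cX)$ into Deuring--Shafarevich and using $\bar{\gamma}\le\bar{\gg}$ in Hurwitz, a direct comparison yields
\[
(p-1)\sum_P(t_P-1)=2p(\bar{\gamma}-\bar{\gg})\le 0 ,
\]
whence $t_P=1$ for every ramified $P$, i.e. $G_P^{(2)}=\{1\}$ (and $\bar{\cX}$ is again ordinary). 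For an arbitrary $G$ I would then invoke compatibility of the lower-numbering filtration with subgroups, $H\cap G_P^{(i)}=H_P^{(i)}$: if $G_P^{(2)}\ne\{1\}$, a subgroup $H$ of order $p$ inside it would satisfy $H_P^{(2)}=H\ne\{1\}$, contradicting the $\Z/p$ case. Thus $G_P^{(2)}=\{1\}$, and since $G_P^{(1)}=G_P^{(1)}/G_P^{(2)}$ always embeds into the additive group of the residue field, $G_P^{(1)}$ is elementary abelian.

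For \emph{(I)} I would apply \eqref{eq2deuring} to the $p$-group $H$. Its short orbits have stabilizers $H_{P_i}$ of order $\ge p$, so $\ell_i=|H|/|H_{P_i}|\le|H|/p$ and the formula rearranges to
\[
\gamma(\cX)-1=|H|\Big[(\bar{\gamma}-1)+\sum_i\big(1-\tfrac{1}{|H_{P_i}|}\big)\Big].
\]
Using $\gamma(\cX)\ge2$, $\bar{\gamma}\ge0$ and $|H_{P_i}|\ge p\ge3$, the configurations with at most one short orbit when $\bar{\gamma}=0$ are excluded (they force $\gamma(\cX)<2$), and minimizing the positive bracket shows that the extremal case, $\bar{\gamma}=0$ with exactly two short orbits of stabilizer order $p$, gives $\gamma(\cX)-1\ge|H|\,\tfrac{p-2}{p}$, i.e. the asserted bound; every other configuration gives a strictly smaller $|H|$. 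The inequality $\gamma(\cX)\le\gg(\cX)$ yields the second estimate.

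For \emph{(III)}, set $G=\aut(\cX)$. By \emph{(II)} the different collapses to $d_P=(|G_P|-1)+(|Q_P|-1)\le2(|G_P|-1)$, and substituting into the orbit form of \eqref{eq1} gives
\[
\frac{2\gg(\cX)-2}{|G|}=2\bar{\gg}-2+\sum_o\frac{d_{P_o}}{|G_{P_o}|},\qquad \frac{d_{P_o}}{|G_{P_o}|}\in\big[\tfrac12,2\big).
\]
The cases $\bar{\gg}\ge1$, and $\bar{\gg}=0$ with many short orbits, give $|G|=O(\gg(\cX))$ at once. The remaining finitely many configurations, $\bar{\gg}=0$ with few short orbits, are the main obstacle: exactly as in the classical derivation of the Hurwitz bound one must bound the left-hand side below, and the genuinely new features here, wild ramification pushing orbit contributions up to $2$ and isolated points with large stabilizer, are tamed by feeding in $|Q_P|\le|H|\le3(\gg(\cX)-1)$ from \emph{(I)} together with the standard bound on the tame cyclic part $U$ of $G_P$. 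This produces $|\aut(\cX)|\le84(\gg(\cX)-1)\gg(\cX)$.

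Finally, for \emph{(IV)} I would apply \eqref{eq2deuring} to $H$ with $\gamma(\cX)=1$, obtaining
\[
0=(\bar{\gamma}-1)+\sum_i\big(1-\tfrac{1}{|H_{P_i}|}\big),
\]
where each summand lies in $[\tfrac23,1)$. Two or more short orbits would make the sum exceed $1$ and force $\bar{\gamma}<0$; a single one would give $\bar{\gamma}=1-(1-\tfrac{1}{|H_{P_1}|})\in(0,\tfrac13]$, impossible for an integer. Hence $H$ acts freely and $\bar{\gamma}=1$. The Frattini quotient $H\twoheadrightarrow(\Z/p)^d$ then yields an unramified elementary abelian cover of $\cX/H$ of rank $d$; since the $p$-rank of $\cX/H$ equals the $\fp$-dimension of the space of unramified $\Z/p$-covers, $d\le\bar{\gamma}=1$, so $H$ is cyclic. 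In this way all four assertions, originally due to Nakajima, follow from \eqref{eq1} and \eqref{eq2deuring}, the heart of the argument being the $\bar{\gg}=0$ analysis in \emph{(III)}.
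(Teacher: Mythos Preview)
The paper does not prove this theorem at all; it is quoted from Nakajima \cite{N} and \cite[Lemma 11.75]{HKT} as background, so there is no in-paper argument to compare against. Your proofs of (I), (II) and (IV) are correct and are essentially Nakajima's original arguments: for (II), the comparison of \eqref{eq1} and \eqref{eq2deuring} for a $\Z/p$-action followed by subgroup compatibility of the lower filtration; for (I) and (IV), the configuration analysis of \eqref{eq2deuring} for a $p$-group together with, in (IV), the identification of the $p$-rank with the $\fp$-dimension of the space of unramified $\Z/p$-covers.

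For (III), however, what you have written is a plan rather than a proof. You correctly set up the orbit form of Hurwitz and observe, via (II), that each orbit contribution lies in $[\tfrac12,2)$; but the sentence ``feeding in $|Q_P|\le 3(\gg-1)$ from (I) together with the standard bound on the tame cyclic part $U$ \dots\ produces $|\aut(\cX)|\le 84(\gg-1)\gg$'' hides the entire content of the result. The constant $84$ does not fall out automatically: one must run through the finitely many $\bar\gg=0$ signatures (two or three short orbits, with various combinations of tame and wild stabilizers), and in each wild sub-case bound $|G_{P_o}|=|Q_{P_o}|\cdot|U|$ using (I) for the $p$-part and a separate argument for the prime-to-$p$ cyclic part, then verify that the positive quantity $-2+\sum_o d_{P_o}/|G_{P_o}|$ is bounded below by $1/(42\gg)$. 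Nakajima carries this out case by case; your sketch names the ingredients but does not perform the analysis. So (I), (II), (IV) stand as written, while (III) is incomplete.
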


The following results are due to Giulietti and Korchm\'aros; see \cite{GK2016}.

\begin{lemma}
\label{terribile} Let $H$ be a solvable automorphism group of an algebraic curve $\cX$ of genus $\gg(\cX) \ge 2$ containing a normal $d$-subgroup $Q$ of odd order such that $|Q|$ and $[H:Q]$ are coprime. Suppose that a complement $U$ of $Q$ in $H$ is abelian, and that $N_H(U)\cap Q=\{1\}$.  If
\begin{equation}
\label{eq22bisdic2015}
{\mbox{$|H|\geq 30(\gg(\cX)-1)$}},
\end{equation}
then  $d=p$ and $U$ is cyclic.
\end{lemma}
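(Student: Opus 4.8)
The plan is to argue by contradiction, assuming $d \neq p$, and to derive a contradiction with the lower bound \eqref{eq22bisdic2015}. First I would pass to the quotient curve $\bar{\cX} = \cX/Q$, whose function field is fixed by the normal subgroup $Q$; since $|Q|$ and $[H:Q]$ are coprime and $U$ is a complement, the quotient group $H/Q \cong U$ acts on $\bar{\cX}$ as a tame automorphism group (here we use $\gcd(p,|Q|)=1$ when $d\neq p$, so in fact $Q$ itself is tame, forcing $H$ to be tame and $d \ne p$ to give a strong structural constraint). The key point is to analyze the ramification of the cover $\cX \to \bar{\cX}$ using the Hurwitz genus formula \eqref{eq1}. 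Because $Q$ is a $d$-group with $d\neq p$, every ramification is tame, so $d_P = |Q_P| - 1$ for each $P$ lying over a branch point, and \eqref{eq1} reads
\begin{equation}
\label{prop-hurwitz}
2\gg(\cX) - 2 = |Q|\big(2\bar{\gg} - 2\big) + \sum_{\bar{Q}} \frac{|Q|}{|Q_P|}\big(|Q_P| - 1\big),
\end{equation}
where the sum is over branch points $\bar{Q}$ of the cover and $P$ is any place above $\bar Q$.

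Next I would bound the number and type of short orbits of $Q$ on $\cX$. The hypothesis $N_H(U) \cap Q = \{1\}$ is what controls this: it says that $U$ does not normalize any nontrivial subgroup of $Q$ pointwise in a way compatible with a common fixed place, which should be used to show that $Q$ has very few short orbits — ideally at most one or two — and that the corresponding stabilizers $Q_P$ are forced to be small or cyclic. Combining this with \eqref{prop-hurwitz} and the standard estimate $2\bar\gg - 2 \ge -2$, one obtains an inequality of the shape $|Q| \le c\,(\gg(\cX) - 1)$ for an explicit constant $c$ noticeably smaller than what \eqref{eq22bisdic2015} permits once we also account for the action of $U$. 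More precisely, I would then use that $|H| = |Q|\cdot|U|$ and that $U$, being abelian and tame on $\cX$, satisfies the classical Hurwitz-type bound $|U| \le 4\bar{\gg} + 4$ (or a sharper version on $\bar{\cX}$), and feed both estimates back in: the hypothesis $|H| \ge 30(\gg(\cX)-1)$ together with the upper bounds on $|Q|$ and $|U|$ collapses the possibilities, leaving only $\bar{\gg} \le 1$ and a tightly constrained ramification pattern.

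Finally, with $\bar\gg \in \{0,1\}$ and the short-orbit structure pinned down, I would rule out $d \neq p$ directly: the surviving configurations would force $Q$ to act on a rational or elliptic quotient with at most a couple of branch points of tame type, and then the $N_H(U)\cap Q = \{1\}$ condition, applied to the induced action of $U$ on these few branch points, becomes incompatible with $|H|$ being as large as $30(\gg(\cX)-1)$ — this is where one gets the contradiction. Hence $d = p$. Knowing $d = p$, the curve-theoretic input changes: now $Q$ is a $p$-group, so by Theorem \ref{naka}(II)-(III)-type considerations (or directly the Deuring–Shafarevich formula \eqref{eq2deuring}) the short orbits of $Q$ are again severely limited, and the abelian complement $U$ acting on the quotient with the normalizer condition $N_H(U)\cap Q=\{1\}$ must be cyclic — a maximal abelian tame group acting with the right fixed-point behavior on a low-genus quotient is cyclic. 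I expect the main obstacle to be the bookkeeping in the case analysis for the short orbits of $Q$: one must carefully extract from $N_H(U)\cap Q=\{1\}$ the precise limitation on stabilizers $Q_P$ and on how $U$ permutes the branch locus of $\cX\to\bar\cX$, since it is exactly the interplay between that normalizer hypothesis and the Hurwitz/Deuring–Shafarevich estimates that forces $d=p$ and the cyclicity of $U$; the numerology with the constant $30$ is tight enough that a loose argument will not close the gap.
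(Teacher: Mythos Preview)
The paper does not contain a proof of this lemma: it is stated in Section~\ref{sec2} as a preliminary result quoted from Giulietti and Korchm\'aros \cite{GK2016}, with no argument given. There is therefore no ``paper's own proof'' to compare your proposal against.

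As for the proposal itself, it is a plausible strategic outline but not yet a proof. The overall shape --- reduce via Hurwitz on $\cX\to\cX/Q$, bound $|U|$ on the quotient, and play the two estimates against the hypothesis $|H|\ge 30(\gg-1)$ --- is reasonable and is indeed the kind of argument one expects from \cite{GK2016}. However, several of your steps are placeholders rather than arguments. Most importantly, you never actually explain how the hypothesis $N_H(U)\cap Q=\{1\}$ constrains the ramification of $Q$: you write that it ``should be used to show that $Q$ has very few short orbits,'' but you do not say how, and this is precisely the non-obvious part. That condition concerns the conjugation action of $Q$ on $U$ inside $H$, not directly the action of $Q$ on places of $\cX$; extracting geometric information from it requires a genuine argument (typically via how $U$ permutes the short $Q$-orbits and what a fixed short orbit would imply for centralizers). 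Similarly, your derivation of the cyclicity of $U$ once $d=p$ is asserted rather than shown: ``a maximal abelian tame group acting with the right fixed-point behavior on a low-genus quotient is cyclic'' is not a theorem without further hypotheses, and you would need to pin down that $U$ fixes a place of $\cX$ (so that \cite[Theorem 11.49]{HKT} applies) or argue through the structure of abelian subgroups of $\mathrm{PGL}(2,\K)$ when $\bar\gg=0$. Finally, you do not engage with the constant $30$ at all; since you yourself note the numerology is tight, the proposal as written does not close the gap.
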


The {\emph{odd core}} $O(G)$ of a group $G$ is its maximal normal subgroup of odd order. If $O(G)$ is trivial, then $G$ is an {\emph{odd core-free}} group.

\begin{lemma}
\label{oddcore} Let $\cX$ be a curve of even genus, and $G$ be an odd core-free automorphism group of $\cX$ with a non-abelian simple minimal normal subgroup $M$. Up to isomorphism, one of the following cases occurs for some prime $d$ and odd $k$:
\begin{itemize}
\item[\rm(i)] $M=PSL(2,d^k)\le G \le P\Gamma L(2,d^k)$ with $d^k \ge 5$;
\item[\rm(ii)] $M=PSL(3,d^k)\le G \le P\Gamma L(3,d^k)$ with $d^k \equiv 3 \pmod 4$;
\item[\rm(iii)] $M=PSU(3,d^k)\le G \le P\Gamma U(3,d^k)$ with $d^k \equiv 1 \pmod 4$;
\item[\rm(iv)] $M=G=A_7$, the alternating group on $7$ letters;
\item[\rm(v)]  $M=G=M_{11}$, the Mathieu group on $11$ letters.
\end{itemize}
\end{lemma}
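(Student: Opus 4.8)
The plan is to extract from the even-genus hypothesis a restriction on the $2$-local structure of $G$ and then invoke the classifications of finite simple groups whose Sylow $2$-subgroups are dihedral or semidihedral. First I would apply the Hurwitz formula (\ref{eq1}) to a single involution $\iota\in G$, which is tame since $p\geq3$: the cover $\cX\to\cX/\langle\iota\rangle$ is ramified at exactly $f$ places, each of different $1$, so $2\gg-2=2(2\bar\gg-2)+f$ and hence $f\equiv2\gg+2\pmod4$. As $\gg$ is even this gives $f\equiv2\pmod4$; in particular every involution of $G$ fixes a place and is not fixed-point-free. Combined with the preliminary analysis of $2$-groups acting on curves of even genus collected in Section~\ref{sec2}, this forces a Sylow $2$-subgroup $S$ of $G$ to be cyclic, dihedral, or semidihedral.

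Next I would identify $M$. Because $M\trianglelefteq G$, the intersection $S_M=S\cap M$ is a Sylow $2$-subgroup of $M$ and is normal in $S$; as a subgroup of a cyclic, dihedral, or semidihedral group it is cyclic, dihedral, semidihedral, or quaternion. Since a non-abelian simple group has no normal $2$-complement, $S_M$ is non-cyclic, and by the Brauer--Suzuki theorem it is not (generalized) quaternion; hence $S_M$ is dihedral or semidihedral. The Gorenstein--Walter theorem then yields, in the dihedral case, $M\cong\PSL(2,d^k)$ with $d^k\geq5$ odd or $M\cong A_7$, while the Alperin--Brauer--Gorenstein theorem yields, in the semidihedral case, $M\cong\PSL(3,d^k)$ with $d^k\equiv3\pmod4$, $M\cong\PSU(3,d^k)$ with $d^k\equiv1\pmod4$, or $M\cong M_{11}$. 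This produces the five isomorphism types of $M$ appearing in the statement, and in case (ii) the congruence $d^k\equiv3\pmod4$ already forces $k$ odd.

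I would then place $G$ between $M$ and $\aut(M)$. As $M$ is simple, $C_G(M)\cap M=Z(M)=\{1\}$, so $C_G(M)$ is normal in $G$, meets $M$ trivially, and centralizes it. If $C_G(M)$ had even order it would contain an involution $t\notin M$ centralizing $S_M$, whence $\langle S_M,t\rangle\cong S_M\times\langle t\rangle$ would have $2$-rank at least $3$, contradicting that $S$ is cyclic, dihedral, or semidihedral (all of $2$-rank $\leq2$). Thus $C_G(M)$ has odd order, and since $G$ is odd core-free we conclude $C_G(M)=\{1\}$. Identifying $M$ with its group of inner automorphisms gives $M\trianglelefteq G\leq\aut(M)$.

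Finally I would determine which overgroups survive the $2$-local constraint. For $M=A_7$ one has $\aut(M)=S_7$, whose Sylow $2$-subgroup $D_4\times C_2$ has $2$-rank $3$, so $G=A_7$; for $M=M_{11}$, the group is complete and $\aut(M)=M_{11}$, forcing $G=M_{11}$. In the three linear and unitary families the outer automorphisms are generated by diagonal, field, and (for $\PSL(3)$ and $\PSU(3)$) graph automorphisms, and I would show that adjoining the graph automorphism, or a field automorphism of even order, produces a Sylow $2$-subgroup that is neither cyclic, dihedral, nor semidihedral; this confines $G$ to $P\Gamma L$, respectively $P\Gamma U$, and forces $k$ odd. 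The main obstacle is exactly this last step, especially the parity of $k$ in case (iii) where $d^k\equiv1\pmod4$ does not by itself force $k$ odd: ruling out even $k$ requires the explicit Sylow $2$-subgroup structure of each almost simple group, a careful check of how the diagonal, field, and graph automorphisms act on $S_M$, and the refined even-genus input that each involution fixes $f\equiv2\pmod4$ places, applied together with the Klein four-group fixed-place relations to exclude every extension that would enlarge or decompose the dihedral or semidihedral Sylow $2$-subgroup.
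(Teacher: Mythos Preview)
The paper does not prove this lemma: it is quoted without proof from Giulietti--Korchm\'aros \cite{GK2016}, whose method the authors explicitly say they adopt (last paragraph of the Introduction). Your outline is precisely that method---restrict the Sylow $2$-subgroup via the even-genus hypothesis, then invoke Gorenstein--Walter and Alperin--Brauer--Gorenstein, then control $G$ inside $\aut(M)$---so you have correctly reconstructed the intended argument.

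One step deserves sharper justification. You pass from Lemma~\ref{elem8} (no elementary abelian $2$-subgroup of order $8$) to ``$S$ is cyclic, dihedral, or semidihedral''. Lemma~\ref{elem8} only gives $2$-rank at most $2$, which by itself still allows groups such as $C_4\times C_2$. The missing input is exactly the congruence you derived, $f_\iota\equiv2\pmod4$ for every involution $\iota$: in $C_4\times C_2=\langle a,b\rangle$ with $|a|=4$, the places with stabiliser exactly $\langle b\rangle$ lie in orbits of size $4$, so $f_b\equiv0\pmod4$, a contradiction. A systematic version of this orbit-counting argument (and allowing $S$ generalised quaternion at this stage, to be eliminated for $S_M$ via Brauer--Suzuki just as you do) is what \cite{GK2016} actually carries out. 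Your self-identified obstacle on the parity of $k$ is likewise the genuine work of that reference: oddness of $k$ records that no field or graph automorphism of even order can be adjoined to $M$ inside $G$ without pushing the Sylow $2$-subgroup of $G$ out of the dihedral/semidihedral class, and verifying this is the case-by-case computation in the almost simple groups over $M$ that you anticipate.
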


\begin{lemma}
\label{elem8} If $\cX$ is a curve of even genus then $\aut(\cX)$ has no elementary abelian $2$-subgroup of order $8$.
\end{lemma}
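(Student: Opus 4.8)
The plan is to argue by contradiction directly from the Hurwitz genus formula, exploiting two facts: in odd characteristic an elementary abelian $2$-group acts tamely, and $(C_2)^3$ contains no cyclic subgroup of order larger than $2$. Suppose $\aut(\cX)$ contains a subgroup $E$ that is elementary abelian of order $8$; we may assume $\gg(\cX)\ge 2$, the only remaining even value $\gg(\cX)=0$ being handled by the elementary fact that $\PGL(2,\K)$ contains no such subgroup. Put $\bar\cX=\cX/E$, of genus $\bar\gg$, so that $\K(\cX)\mid\K(\bar\cX)$ is Galois of degree $8$, and tame since $p$ is odd.

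The first step is to pin down the ramification of $\cX\to\bar\cX$. At a ramification place $P$ the stabiliser $E_P$ has the form $Q_P\rtimes U$ with $Q_P$ a $p$-group and $U$ cyclic, as recalled in Section~\ref{sec2}; since $E$ is a $2$-group and $p$ is odd we get $Q_P=\{1\}$, so $E_P=U$ is cyclic. As $E\cong(C_2)^3$ has no cyclic subgroup of order exceeding $2$, this forces $|E_P|=2$ at every ramification place. Hence the fibre of $\cX\to\bar\cX$ over each branch place consists of exactly $8/2=4$ places, each ramified, and by tameness and \eqref{eq1bis} the different exponent there is $d_P=|E_P|-1=1$. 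The second step is the genus count: if $\bar\cX$ has $s$ branch places then $\sum_{P\in\cX}d_P=4s$, and \eqref{eq1} applied to $G=E$ gives
\[
2\gg(\cX)-2=8\bigl(2\bar\gg-2\bigr)+4s,
\]
so that $\gg(\cX)=8\bar\gg-7+2s$. The right-hand side is odd, contradicting the hypothesis that $\gg(\cX)$ is even. Therefore no such $E$ exists.

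The argument is short, and the only point that genuinely needs care is the determination of the inertia groups: one must observe that tameness forces each $E_P$ to be cyclic, so that although $E$ has $2$-rank $3$ it can ramify only with index exactly $2$; once this is in place the different is automatically supported on $E$-orbits of uniform size $4$ and the parity of $\gg(\cX)$ is completely forced, irrespective of whether any ramification occurs at all. I would also point out that, in contrast with the more delicate structural results such as Lemma~\ref{oddcore}, this lemma needs no appeal to the classification of finite simple groups.
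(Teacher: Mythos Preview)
Your argument is correct. The key observation---that in odd characteristic the stabiliser $E_P$ of a place under a $2$-group is cyclic, hence of order at most $2$ inside $(C_2)^3$---is exactly what makes the parity computation go through, and your application of the Hurwitz formula is clean. (Note that the paper's standing convention in Section~\ref{sec2} is $\gg\ge2$, so your aside about $\gg=0$ is unnecessary, though harmless and true.)

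As for comparison with the paper: the paper does not actually prove this lemma. It is quoted in Section~\ref{sec2} as one of several preliminary results due to Giulietti and Korchm\'aros \cite{GK2016}, with no argument given. Your proof is therefore a self-contained replacement for a citation, and it is entirely elementary---requiring only Riemann--Hurwitz and the structure of tame inertia---whereas the other cited lemmas from \cite{GK2016} (such as Lemma~\ref{oddcore}) genuinely rely on deep classification results. Your closing remark making this distinction is apt.
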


\begin{lemma}
\label{norm2} Let $\cX$ be a curve of even genus and $G \leq \aut(\cX)$. If $G$ has a minimal normal subgroup of order $2$ then $G=O(G)\rtimes S_2$, where $S_2$ is Sylow $2$-subgroup of $G$, unless $S_2$ is a generalized quaternion group.
\end{lemma}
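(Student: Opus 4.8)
The plan is to reduce the statement to a question about finite groups of small $2$-rank and then invoke the relevant classification theorems, letting the even-genus hypothesis enter only through Lemma \ref{elem8} and an elementary fixed-point count. Write $\langle z\rangle$ for the given minimal normal subgroup of order $2$: since $\aut(C_2)$ is trivial, $z$ is a central involution of $G$. As $p$ is odd, $z$ is tame, and the Hurwitz genus formula \eqref{eq1} applied to $\langle z\rangle$ reads $2\gg(\cX)-2=2\bigl(2\,\gg(\cX/\langle z\rangle)-2\bigr)+|\mathrm{Fix}(z)|$, so $|\mathrm{Fix}(z)|=2\gg(\cX)-4\,\gg(\cX/\langle z\rangle)+2\equiv 2\pmod 4$ because $\gg(\cX)$ is even; in particular $z$ fixes at least two places. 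The same congruence holds for every involution of $G$, and it is this (rather than just the parity of $\gg(\cX)$) that will be needed in the delicate cases.

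Next I would reduce to $O(G)=1$. Put $\bar G=G/O(G)$: since $O(G)$ has odd order, $z\notin O(G)$, so $\bar z=zO(G)$ is again a central involution of $\bar G$ generating a minimal normal subgroup of order $2$, $O(\bar G)=1$, and the projection maps a Sylow $2$-subgroup $S_2$ of $G$ isomorphically onto one of $\bar G$; thus the exceptional hypothesis ``$S_2$ generalized quaternion'' is preserved, and once $\bar G$ is known to be a $2$-group, Schur--Zassenhaus yields $G=O(G)\rtimes S_2$. So it suffices to prove: \emph{if $O(G)=1$ and $S_2$ is not generalized quaternion, then $G$ is a $2$-group}. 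By Lemma \ref{elem8}, $\aut(\cX)$, hence $G$, hence $\bar G$, has no elementary abelian $2$-subgroup of order $8$, so $G$ has $2$-rank at most $2$. If $S_2$ is cyclic, Burnside's normal $2$-complement theorem gives $G=O(G)\rtimes S_2=S_2$ and we are done; so assume $S_2$ is noncyclic of $2$-rank exactly $2$, hence contains a Klein four-group.

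The crucial step is to exclude nonabelian simple sections. Let $M$ be a minimal normal subgroup of $G$ different from $\langle z\rangle$, if one exists. Distinct minimal normal subgroups centralize each other and intersect trivially, so $z\notin M$; hence $M$ is neither elementary abelian of odd order ($M\le O(G)=1$) nor an elementary abelian $2$-group (otherwise $\langle z\rangle$ and $M$ would span an elementary abelian $2$-subgroup of order $\ge 8$, against Lemma \ref{elem8}). Therefore $M\cong S_1\times\cdots\times S_t$ with $S$ nonabelian simple; as $S$ has $2$-rank $\ge 2$ (no nonabelian simple group has a cyclic or generalized quaternion Sylow $2$-subgroup) while $M$ has $2$-rank $\le 2$, we get $t=1$, so $M$ is simple. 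Then Lemma \ref{oddcore} — the even-genus form of the Gorenstein--Walter and Alperin--Brauer--Gorenstein classifications, applicable since $\gg(\cX)$ is even and $O(G)=1$ — forces $M$ to be one of $\PSL(2,d^k)$, $\PSL(3,d^k)$, $\PSU(3,d^k)$, $A_7$, $M_{11}$. Each of these has a Sylow $2$-subgroup containing a Klein four-group, and $z$ centralizes $M$, so $\langle z\rangle$ together with such a four-group would generate a copy of $C_2^3$ in $G$ — impossible. Hence $\langle z\rangle$ is the unique minimal normal subgroup of $G$; in particular $G$ has no component.

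Finally, in this monolithic case $F^*(G)=F(G)=O_2(G)$ (using $O(G)=1$), whence $C_G(O_2(G))\le O_2(G)$, so $C_G(O_2(G))=Z(O_2(G))$ and $G/Z(O_2(G))$ embeds in $\aut(O_2(G))$, with $z\in Z(O_2(G))$ and $O_2(G)$ of $2$-rank $\le 2$. When $O_2(G)$ is cyclic, or more generally when $\aut(O_2(G))$ is a $2$-group, this forces $G$ to be a $2$-group. The only obstruction is $O_2(G)\cong Q_8$ (for which $\aut(Q_8)\cong S_4$): the groups $G$ with $O_2(G)\cong Q_8$, $O(G)=1$, $G/\langle z\rangle\hookrightarrow S_4$ and $S_2$ not generalized quaternion are essentially of type $\mathrm{GL}(2,3)$, and ruling these out is where the curve must be used — by combining the congruence $|\mathrm{Fix}(w)|\equiv 2\pmod 4$ over the involutions $w$ of $G$ with the Hurwitz genus formula \eqref{eq1} applied to the quotients $\cX/O_2(G)$ and $\cX/G$. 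I expect this last point — squeezing out of the even-genus hypothesis enough to eliminate the $2$-constrained configurations that the rank bound of Lemma \ref{elem8} leaves open — to be the main obstacle; the almost-simple possibilities, by contrast, are dispatched cleanly by the classification theorems.
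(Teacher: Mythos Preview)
The paper does not prove this lemma: it appears in Section~\ref{sec2} among the preliminary results attributed to Giulietti and Korchm\'aros \cite{GK2016} and is quoted without proof, so there is no in-paper argument to compare your attempt against.

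As a standalone attempt, your outline has the right shape but is incomplete, as you yourself acknowledge. Two points. First, after passing to $\bar G=G/O(G)$ you invoke Lemma~\ref{oddcore}, but $\bar G$ is not a subgroup of $\aut(\cX)$, so that lemma does not literally apply. This is harmless, since your actual argument at that stage only uses that a nonabelian simple $M$ has $2$-rank $\ge2$ (hence contains a Klein four-group) and that $\langle\bar z\rangle\times V$ would then violate the $2$-rank bound coming from Lemma~\ref{elem8}; the rank bound does transfer to $\bar G$ because the Sylow $2$-subgroups are isomorphic. Second, the decisive gap is the $2$-constrained endgame. Your assertion that $O_2(\bar G)\cong Q_8$ is ``the only obstruction'' is not justified as written: other $2$-groups of rank $\le2$ (for instance $C_2\times C_2$ or $C_{2^a}\times C_{2^a}$) also have automorphism group of order divisible by $3$, and one must argue separately that the centrality of $\bar z$ kills the odd part of $\bar G/O_2(\bar G)$ in those cases. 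More importantly, in the $Q_8$ case you correctly isolate the $\mathrm{GL}(2,3)$-type configuration --- which has trivial odd core, unique minimal normal subgroup of order $2$, and semidihedral (hence not generalized quaternion) Sylow $2$-subgroup --- but you do not eliminate it: ``combine $|\mathrm{Fix}(w)|\equiv2\pmod4$ with Hurwitz on suitable quotients'' is a plan, not a proof. Until that computation is carried out the argument remains open at precisely the point you flag as the main obstacle.
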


For a positive integer $d$, $C_d$ stands for a cyclic group of order $d$, $D_d$ for a dihedral group of order $2d$, $E_d$ for an elementary abelian group of order $d$, and $Dih(E_d)$ for a generalized dihedral group $E_d \rtimes C_2$ of order $2d$.

\section{The automorphism group of $ \cX_{(L_1, L_2)}$} \label{sec3}

\begin{lemma} \label{lem1}
For the curve $\cX_{(L_1, L_2)}$ as in {\rm (\ref{l1l2})}, $X_\infty = (1:0:0)$ and $Y_\infty=(0:1:0)$, the following properties hold:
\begin{itemize}
\item[i)] $X_\infty$ and $Y_\infty$ are $q$-fold ordinary points;
\item[ii)] $\cX_{(L_1, L_2)}$ is ordinary with $\gg(\cX_{(L_1, L_2)})=\gamma(\cX_{(L_1, L_2)})=(q-1)^2$;
\item[iii)] If $L_1 \ne L_2$, $\aut(\cX_{(L_1, L_2)})$ contains the subgroup $\Sigma \rtimes \Gamma$ defined in {\rm (\ref{aut2})};
\item[iv)] If $L_1=L_2=L$, $\aut(\cX_{(L, L)})$ contains the subgroup $\Sigma \rtimes \Delta$ defined in {\rm (\ref{aut1})};
\item[v)] In both cases {\rm iii)} and {\rm iv)}, the group $\Sigma$ is a Sylow $p$-subgroup of $\aut(\cX_{(L_1, L_2)})$.
\item[vi)] The quotient curves $\cX_{(L_1, L_2)}/\Sigma_x$ and $\cX_{(L_1, L_2)}/\Sigma_y$ are rational curves, where $\Sigma_x=\{\tau_{\alpha, \beta}\in\Sigma\mid \beta=0\}$ and $\Sigma_y=\{\tau_{\alpha, \beta}\in\Sigma\mid \alpha=0\}$.
\end{itemize}
\end{lemma}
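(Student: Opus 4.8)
The plan is to establish the six items in the order i), iii)--iv), vi), ii), v), since each later part relies on earlier ones. For i), I pass to the projective plane model $F=\widetilde L_1(X,Z)\,\widetilde L_2(Y,Z)-Z^{2q}$, where $\widetilde L_i(U,Z)=Z^q L_i(U/Z)$ is the degree-$q$ homogenization of $L_i$. Since $L_i$ is separable and $\bq$-linearized it is additive, its $q$ roots are distinct and form an $\fp$-subspace of $\K$, and $\widetilde L_i(U,Z)=\mathrm{lc}(L_i)\prod_{L_i(\gamma)=0}(U-\gamma Z)$ is a product of $q$ distinct linear forms. On the affine part $L_1'$ is a nonzero constant (a separable additive polynomial has constant nonzero derivative), so $\partial F/\partial X=L_1'(X)L_2(Y)$ is a nonzero scalar multiple of $L_2(Y)=1/L_1(X)$, which never vanishes on $\cX$; hence the affine curve is smooth. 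On the line $Z=0$ one has $F=\mathrm{lc}(L_1)\mathrm{lc}(L_2)X^qY^q$, so $X_\infty$ and $Y_\infty$ are the only points of $\cX$ at infinity, and expanding $F$ in the chart $X=1$ (resp. $Y=1$) and taking the lowest-degree form shows the tangent cone at $X_\infty$ is $\widetilde L_2(Y,Z)$ (resp. at $Y_\infty$ is $\widetilde L_1(X,Z)$), a product of $q$ distinct lines; hence both are ordinary $q$-fold points.

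Items iii)--iv) reduce to substitution in (\ref{l1l2}): additivity gives $L_i(X+\gamma)=L_i(X)+L_i(\gamma)=L_i(X)$ when $L_i(\gamma)=0$, so each $\tau_{\alpha,\beta}$ is an automorphism and $\Sigma$, being the additive group $\{(\alpha,\beta):L_1(\alpha)=L_2(\beta)=0\}$, is elementary abelian of order $q^2$; a primitive $(\bq-1)$-th root of unity $\lambda$ lies in $\fbq^*$, so $\lambda^{\bq^j}=\lambda$ and $L_i(\lambda T)=\lambda L_i(T)$, making $\theta$ an automorphism of order $\bq-1$; and for $L_1=L_2$ the switch $\xi:(X,Y)\mapsto(Y,X)$ obviously preserves the curve. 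The relations $\theta\tau_{\alpha,\beta}\theta^{-1}=\tau_{\lambda\alpha,\lambda^{-1}\beta}$, $\xi\tau_{\alpha,\beta}\xi=\tau_{\beta,\alpha}$, $\xi\theta\xi=\theta^{-1}$ are immediate, and together with $\Sigma\cap\langle\theta,\xi\rangle=\{1\}$ (since $\Sigma$ is a $p$-group and $\bq-1$ is coprime to $p$) they yield the semidirect products (\ref{aut2}) and (\ref{aut1}). For vi): $\Sigma_y$ has order $q$ and fixes $x$, so $\K(x)\subseteq\K(\cX)^{\Sigma_y}$; since $[\K(\cX):\K(x)]=q$ (the $Y$-degree of the irreducible relation (\ref{l1l2})), equality holds and $\cX/\Sigma_y$ has rational function field $\K(x)$, and symmetrically $\cX/\Sigma_x$ has function field $\K(y)$.

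For ii), the genus is read off from i): the degree-$2q$ plane curve has arithmetic genus $(2q-1)(q-1)$, and subtracting $\binom{q}{2}$ for each of the two ordinary $q$-fold points leaves $(2q-1)(q-1)-2\binom{q}{2}=(q-1)^2$. For the $p$-rank, I apply the Deuring--Shafarevich formula (\ref{eq2deuring}) to $\cX\to\cX/\Sigma_y=\mathbb{P}^1$: there are exactly $q$ places of $\cX$ over $Y_\infty$ (an ordinary $q$-fold point), separated by their pairwise distinct finite $x$-values, namely the $q$ roots of $L_1$, so $\Sigma_y$ (which fixes $x$) fixes each of them, producing at least $q$ short orbits of size $1$; hence $\gamma(\cX)-1\geq q(0-1)+q(q-1)=(q-1)^2-1$, and since $\gamma(\cX)\leq\gg(\cX)=(q-1)^2$ always, we obtain $\gamma(\cX)=\gg(\cX)=(q-1)^2$, i.e. $\cX$ is ordinary. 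Finally for v): $\Sigma$ sits inside some Sylow $p$-subgroup $H$ of $\aut(\cX)$, so $|H|\geq q^2$; as $\cX$ is ordinary with $\gamma(\cX)=(q-1)^2\geq2$, Theorem \ref{naka}(I) gives $|H|\leq\frac{p}{p-2}\big((q-1)^2-1\big)=\frac{pq(q-2)}{p-2}<pq^2$, the last inequality amounting to $3q-2<pq$, true since $p\geq3$; as $|H|$ is a power of $p$ with $q^2\leq|H|<pq^2$, necessarily $|H|=q^2=|\Sigma|$, so $\Sigma=H$ is a Sylow $p$-subgroup.

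I expect the main obstacle to be the bookkeeping at infinity: in i) one must make sure no further singular points hide on $Z=0$ (this is exactly what $F|_{Z=0}=\mathrm{lc}(L_1)\mathrm{lc}(L_2)X^qY^q$ guarantees), and in ii) one must be certain that $\Sigma_y$ permutes the $q$ places over $Y_\infty$ trivially, which hinges on $Y_\infty$ being an \emph{ordinary} $q$-fold point, so that its $q$ distinct tangent lines match up with the $q$ distinct roots of $L_1$. As a cross-check on ii), one may instead analyze the generalized Artin--Schreier cover $\cX\to\mathbb{P}^1$ given by $L_2(y)=1/L_1(x)$: it is totally and wildly ramified, with different exponent $2(q-1)$, over each of the $q$ simple zeros of $L_1$, and unramified elsewhere, which recovers $\gg(\cX)=(q-1)^2$ from the Hurwitz formula (\ref{eq1}) and $\gamma(\cX)=(q-1)^2$ from (\ref{eq2deuring}).
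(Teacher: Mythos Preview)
Your argument is correct. It diverges from the paper's proof in two computational choices, though neither is deep.

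For the genus in ii), the paper reads $\K(\cX)$ as a generalized Artin--Schreier extension of $\K(x)$ (totally ramified over the $q$ simple zeros of $L_1$, unramified elsewhere) and applies the genus formula from \cite[Proposition 3.7.10]{Sti}; you instead use the plane-curve genus formula $p_a-\sum\binom{m_P}{2}$ after establishing that $X_\infty$ and $Y_\infty$ are the only singularities and are ordinary. Your route is more self-contained (no appeal to the Artin--Schreier machinery), but it requires the careful tangent-cone analysis you carry out in i), whereas the paper's route yields i) as a by-product of identifying the poles of $y$ with the totally ramified places. For the $p$-rank, the paper applies Deuring--Shafarevich to the full group $\Sigma$ (two short orbits of length $q$, quotient rational), while you apply it to the order-$q$ subgroup $\Sigma_y$ ($q$ fixed places, quotient $\K(x)$); both give the same lower bound $(q-1)^2-1$ and the squeeze with $\gamma\le\gg$ finishes. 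Your proof of vi) via $\K(x)\subseteq\K(\cX)^{\Sigma_y}$ and a degree count is slightly slicker than the paper's, which introduces $\eta=L_1(x)$ and writes down the quotient explicitly as $L_2(y)=1/\eta$. The cross-check you sketch at the end is exactly the paper's line of argument.
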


\begin{proof}
Let $\bar{P}_{x=\alpha_i}$, with $L_1(\alpha_i)=0$, be the $q$ distinct zeros and $\bar{P}_{x=\infty}$ be the unique pole of $L(x)$ in $\K(x)$. Then
$$ v_{\bar{P}_{x=\alpha_i}}(1/L_1(x))=-1,\quad v_{\bar{P}_{x=\infty}}(1/L_1(x))=q, $$
and $1/L_1(x)$ has valuation zero at any other place of $\K(x)$. Thus, the function field $\K(\cX_{(L_1, L_2)})=\K(x,y)$ with $L_1(x) \cdot L_2(y)=1$, is a generalized Artin-Schreier extension of $\K(x)$ of degree $q$; see \cite[Proposition 3.7.10]{Sti}. The places $\bar{P}_{x=\alpha_i}$ are totally ramified while any other place is unramified. The genus of $\cX_{(L_1, L_2)}$ is given by
$$\gg(\cX_{(L_1, L_2)}) = q \cdot \gg(\K(x))+ \frac{q-1}{2} \cdot (-2+2q)=(q-1)^2.$$
The places $P_{x=\alpha_i}$ lying over $\bar{P}_{x=\alpha_i}$, $i=1 , \ldots , q$, are the poles of $y$ and they are centered at $Y_\infty$. The unique zero of $y$ is place $P_{x=\infty}$ lying over $\bar{P}_{x=\infty}$. Analogously, $x$ has $q$ distinct poles $P_{y=\beta_i}$, with $L_2(\beta_i)=0$, which are simple and centered at $X_\infty$, and a unique zero $P_{y=\infty}$.
Note that $P_{x=\infty}=P_{y=0}$ and $P_{y=\infty}=P_{x=0}$. Let $\Sigma=\{\tau_{\alpha,\beta}:(X,Y)\mapsto(X+\alpha,Y+\beta) \mid L_1(\alpha)=L_2(\beta)=0 \}$. By direct computation $\Sigma$ is an elementary abelian $p$-subgroup of $\aut(\cX_{(L_1, L_2)})$ of order $q^2$. From Theorem \ref{naka}(I), $\Sigma$ is a Sylow $p$-subgroup of $\aut(\cX_{(L_1, L_2)})$. Thus the Galois group of $\K(x,y)|\K(x)$ is contained in $\Sigma$ up to conjugation, and hence $\K(x,y)^\Sigma$ is rational. By direct computation $\Sigma$ has at least two short orbits of length $q$, namely
$$\Omega_x=\{P_{y=\beta} \mid L_2(\beta)=0 \}, \quad \Omega_y=\{P_{x=\alpha} \mid L_1(\alpha)=0\}.$$
From the Deuring-Shafarevich formula (\ref{eq2deuring}) applied to the extension $\K(x,y)|\K(x,y)^\Sigma$,
$$q^2-2q=\gg(\cX_{(L_1, L_2)})-1 \geq \gamma(\cX_{(L_1, L_2)})-1 \geq q^2(0-1)+2(q^2-q)=q^2-2q.$$
Therefore the curve $\cX_{(L_1, L_2)}$ is ordinary. By direct checking, if $L_1 \ne L_2$, then $\Sigma$ and $\Gamma$ are subgroups of $\aut(\cX_{(L_1, L_2)})$, $\Gamma$ normalizes $\Sigma$, and $\Gamma \cap \Sigma=\{1\}$. Analogously, if $L_1 = L_2$, then $\Sigma$ and $\Delta$ are subgroups of $\aut(\cX_{(L_1, L_2)})$, $\Delta$ normalizes $\Sigma$, and $\Delta \cap \Sigma=\{1\}$.

In order to prove vi), set $\eta=L_1(x)$. Then $\K(\eta,y)\subseteq\K(\cX_{(L_1 ,L_2)})^{\Sigma_x}$. Since $[\K(\cX_{(L_1, L_2)}):\K(\eta,y)]\leq q$, this implies $\K(\cX_{(L_1, L_2)})^{\Sigma_x}=\K(\eta,y)$ and
$$ \cX_{(L_1, L_2)}/\Sigma_x:\; L_2(y)=\frac{1}{\eta}. $$
This shows that $\cX_{(L_1, L_2)}/\Sigma_x$ is rational, and the same holds for $\cX_{(L_1, L_2)}/\Sigma_y$.
\end{proof}

The following result follows from the proof of Lemma \ref{lem1}.

\begin{corollary} \label{cor2}
The group $\Sigma$ has exactly two short orbit $\Omega_x$ and $\Omega_y$, both of length $q$. Namely, $$\Omega_x=\{P_{y=\beta} \mid L_2(\beta)=0 \}, \quad \Omega_y=\{P_{x=\alpha} \mid L_1(\alpha)=0\}.$$
Moreover $\K(x,y)^\Sigma$ is rational and the principal divisors of the coordinate functions are given by
$$(x)=q\,P_{y=0} -\sum_{P \in \Omega_y} P, \quad (y)=q\,P_{x=0} -\sum_{P \in \Omega_x} P.$$
\end{corollary}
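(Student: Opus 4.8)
The plan is to observe that all three assertions have, in essence, already been established inside the proof of Lemma~\ref{lem1}, so the task is only to isolate and assemble them. I would first record the rationality of $\K(x,y)^\Sigma$: the subgroup $\Sigma_y=\{\tau_{\alpha,\beta}\in\Sigma\mid\alpha=0\}$ has order $q$ (since $L_1(0)=0$ and $L_2$ has $q$ roots) and fixes $x$, whence $\K(x)\subseteq\K(x,y)^{\Sigma_y}$ and, comparing degrees via Artin's theorem, $\K(x,y)^{\Sigma_y}=\K(x)$. Consequently $\K(x,y)^\Sigma\subseteq\K(x,y)^{\Sigma_y}=\K(x)$, and a subfield of $\K(x)$ containing $\K$ is rational by L\"uroth's theorem.

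For the claim that $\Omega_x$ and $\Omega_y$ are the only short orbits of $\Sigma$, I would push the Deuring--Shafarevich computation of the proof of Lemma~\ref{lem1} to its equality case. Since $\Sigma$ is a $p$-group acting on $\cX_{(L_1,L_2)}$ with rational quotient (so $\bar\gamma=0$) and $\cX_{(L_1,L_2)}$ is ordinary with $\gamma=(q-1)^2$, formula~(\ref{eq2deuring}) for $\K(x,y)\,|\,\K(x,y)^\Sigma$ becomes
$$
q^2-2q \;=\; \gamma-1 \;=\; q^2(0-1)+\sum_{i=1}^{k}\bigl(q^2-\ell_i\bigr),
$$
so that $\sum_{i=1}^{k}\bigl(q^2-\ell_i\bigr)=2q^2-2q$. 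The two orbits $\Omega_x,\Omega_y$ (each of length $q<q^2=|\Sigma|$, hence short) already contribute $2(q^2-q)=2q^2-2q$ to this sum, and every short orbit contributes a strictly positive term $q^2-\ell_i$; therefore no further short orbit can occur, and $\Omega_x,\Omega_y$ are precisely the short orbits of $\Sigma$.

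Finally, the principal divisors of the coordinate functions follow from the local analysis in the proof of Lemma~\ref{lem1}. Using that $L_1,L_2$ are separable $\bq$-linearized of degree $q$ (so $0$ is a root of each), that the places $\bar P_{x=\alpha_i}$ with $L_1(\alpha_i)=0$ are totally ramified of index $q$ in $\K(x,y)\,|\,\K(x)$, and the symmetric statement for $\K(x,y)\,|\,\K(y)$, one reads off that $x$ has $q$ simple poles (the orbit $\Omega_x$) and a single zero, of order $q$, at the place lying over $x=0$, and likewise for $y$; since $\deg(x)=\deg(y)=q$, these multiplicities are forced. Assembling these facts yields the displayed divisors. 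I expect no real obstacle here beyond bookkeeping; the one point that deserves care is verifying that the Deuring--Shafarevich inequality of Lemma~\ref{lem1} is in fact an equality, which is exactly the place where ordinariness of $\cX_{(L_1,L_2)}$ and rationality of $\K(x,y)^\Sigma$ enter.
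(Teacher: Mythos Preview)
Your proposal is correct and follows exactly the paper's approach: the paper gives no separate proof but simply states that the corollary follows from the proof of Lemma~\ref{lem1}, and you have made explicit precisely the three ingredients left implicit there (rationality via $\K(x,y)^{\Sigma_y}=\K(x)$, the equality case of the Deuring--Shafarevich computation forcing exactly two short orbits, and the ramification data for the divisors). One small remark: your local analysis in fact yields $(x)=q\,P_{x=0}-\sum_{P\in\Omega_x}P$ and $(y)=q\,P_{y=0}-\sum_{P\in\Omega_y}P$, consistent with the description of the zeros and poles of $x$ and $y$ in the proof of Lemma~\ref{lem1}; the displayed formulas in the corollary have the subscripts interchanged, which is a typographical slip in the paper rather than a flaw in your argument.
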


\begin{lemma} \label{lem3}
Let $C$ be a cyclic subgroup of $\aut(\cX_{(L_1,L_2)})$ containing $\Gamma= \langle \theta \rangle$, where  $\theta: (X,Y) \mapsto (\lambda X, \lambda^{-1}Y)$ with $\lambda$ a primitive $(\bq-1)$-th root of unity. Suppose that $C$ is contained in the normalizer $N$ of $\Sigma$ in $\aut(\cX_{(L_1,L_2)})$. Then $C=\Gamma$.
\end{lemma}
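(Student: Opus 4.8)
The plan is to show that any cyclic group $C$ with $\Gamma \le C \le N$ cannot be strictly larger than $\Gamma$, by analyzing how $C$ must act on the distinguished pair of places $P_{x=0}=P_{y=\infty}$ and $P_{y=0}=P_{x=\infty}$ and on the unique short orbits of $\Sigma$. First I would recall from Lemma \ref{lem1} and Corollary \ref{cor2} that $\Sigma$ is normal with exactly two short orbits $\Omega_x$ and $\Omega_y$, each of length $q$, so any element of $N$ permutes the set $\{\Omega_x,\Omega_y\}$. Since $C$ is cyclic, hence has a unique subgroup of each order, and since $\Gamma$ already fixes $\Omega_x$ and $\Omega_y$ setwise (the maps $\tau_{\alpha,\beta}\mapsto\theta\tau_{\alpha,\beta}\theta^{-1}$ fix the zero sets $L_1(\alpha)=0$, $L_2(\beta)=0$), I would argue that the subgroup $C^+ \le C$ stabilizing both orbits has index at most $2$ in $C$; replacing $C$ by $C^+$ if necessary, it suffices to bound $|C^+|$.

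Next, since $\Sigma$ acts on $\Omega_x$ via the quotient $\Sigma/\Sigma_x\cong E_q$ acting regularly (and similarly on $\Omega_y$ via $\Sigma/\Sigma_y$), and $C^+$ normalizes $\Sigma$ and stabilizes $\Omega_x$, the group $\Sigma_x \rtimes C^+_0$ acts on the places lying over a point, where $C^+_0$ is the part of $C^+$ fixing a chosen place $P_0 \in \Omega_x$. Here I would use that $P_{x=0}=P_{y=\infty}$ is the unique zero of $x$ and the unique pole of $y$, while $P_{y=0}=P_{x=\infty}$ is the unique pole of $x$ (with multiplicity $q$) and the unique zero of $y$, as recorded in Corollary \ref{cor2}; these two places are canonically attached to $\Sigma$ and so are permuted by $N$. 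An element $c\in C^+$ fixing both must, up to the $\Sigma$-action, send $x\mapsto \mu x$, $y\mapsto \mu' y$ for scalars $\mu,\mu'$ (because it fixes the zero and pole of each and acts on a rational quotient), and compatibility with $L_1(x)L_2(y)=1$ forces $L_1(\mu x)L_2(\mu' y) = 1$ as well; since $L_1,L_2$ are $\bq$-linearized of degree $q$, $L_i(\mu T) = \mu L_i(T)$ precisely when $\mu^{q}=\mu$ after matching coefficients—more carefully, writing $L_1(T)=\sum a_j T^{\bq^j}$, the identity $L_1(\mu x)\cdot L_2(\mu' y)=1=L_1(x)L_2(y)$ forces $\mu\mu'=1$ and $\mu^{\bq^j}=\mu$ for all $j$ with $a_j\ne0$, i.e. $\mu \in \mathbb{F}_{\bq}^*$ (using that not both $L_i$ are $\bq^k$-linearized). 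Hence the cyclic group of admissible scalars $\mu$ has order dividing $\bq-1$, which pins down $C^+ \le \Sigma \rtimes \Gamma$ with the scalar part of order $\le \bq-1$.

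Then I would assemble the counting: $C^+$ is cyclic, $C^+ \cap \Sigma$ is a cyclic $p$-group inside the elementary abelian $\Sigma$, hence trivial or of order $p$; if it had order $p$ then $C^+$ would contain a $\Sigma$-conjugacy class issue since $C^+$ normalizes $\Sigma$ while a nontrivial element of $\Sigma$ cannot normalize—actually I would rather observe directly that $C^+\cap\Sigma = \{1\}$ because a nontrivial $\tau_{\alpha,\beta}$ is fixed-point-free on neither $\Omega_x$ nor $\Omega_y$ whereas any generator of the cyclic $C^+$ must have a fixed place among them (its scalar action on a rational quotient has fixed points), and a cyclic group cannot be generated by a fixed-point-free element together with having a subgroup with fixed points unless that subgroup is trivial—so $C^+$ embeds into the scalar group $\mathbb{F}_{\bq}^*$, giving $|C^+|\mid \bq-1$. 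Since $\Gamma\le C^+$ has order $\bq-1$ we get $C^+=\Gamma$, and then $C=\Gamma$ because the index-$\le2$ overgroup $C$ of $\Gamma=C^+$ that is cyclic and normalizes $\Sigma$ would have to contain an element swapping $\Omega_x,\Omega_y$; but such an element, combined with $\theta$, would force $L_1$ and $L_2$ to be related by that swap in a way incompatible with the chosen generator $\theta$ acting as $(\lambda X,\lambda^{-1}Y)$ on a fixed labeling—more simply, an $\Omega_x\leftrightarrow\Omega_y$ swapping element conjugates $\Sigma_x$ to $\Sigma_y$ and would make $\langle\theta,\cdot\rangle$ non-cyclic (it would contain the involution $\xi$-type element), contradicting $C$ cyclic. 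The main obstacle I anticipate is the bookkeeping in the scalar-compatibility step: carefully extracting from $L_1(\mu x)L_2(\mu^{-1}y)=1$ that $\mu\in\mathbb{F}_{\bq}^*$, using the hypothesis that not both $L_i$ are $\bq^k$-linearized for $k\ge2$ (which is exactly what prevents a larger scalar group), and making sure the "up to $\Sigma$" normalization does not hide extra automorphisms.
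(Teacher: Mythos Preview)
Your overall strategy---reduce to showing that a generator of $C$ acts on $(x,y)$ by scalars, then bound those scalars using the $\bq$-linearized structure of $L_1,L_2$---is the same as the paper's, and the scalar computation (forcing $\mu\in\mathbb{F}_{\bq}^*$ via the hypothesis that not both $L_i$ are $\bq^k$-linearized) is exactly the ``direct computation'' the paper leaves implicit. However, two of your intermediate steps are either wrong or unnecessarily complicated.

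First, your argument for $C\cap\Sigma=\{1\}$ is incorrect: it is not true that a nontrivial $\tau_{\alpha,\beta}$ ``is fixed-point-free on neither $\Omega_x$ nor $\Omega_y$''; for instance $\tau_{0,\beta}$ with $\beta\ne0$ acts without fixed points on $\Omega_x$. The paper's argument here is one line: since $C$ is cyclic it is abelian, so every element of $C$ commutes with $\Gamma$; but a direct check shows $\theta\tau_{\alpha,\beta}\theta^{-1}=\tau_{\lambda\alpha,\lambda^{-1}\beta}\ne\tau_{\alpha,\beta}$ for $(\alpha,\beta)\ne(0,0)$, so no nontrivial element of $\Sigma$ lies in $C$. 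This immediately gives that $C$ is tame.

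Second, your detour through $C^+$ and the separate ``no swapping'' argument at the end is avoidable, and your justification there (``would make $\langle\theta,\cdot\rangle$ non-cyclic'') is not a proof. The paper instead passes to the rational quotient $\cX_{(L_1,L_2)}/\Sigma$: since $C\cap\Sigma=\{1\}$, $C$ embeds as a cyclic tame subgroup $\bar C$ of $PGL(2,\K)$, and such a subgroup fixes exactly two places. As $\bar\Gamma\le\bar C$, the fixed places of $\bar C$ are contained in (hence equal to) those of $\bar\Gamma$, namely the images $\bar P_x,\bar P_y$ of $\Omega_x,\Omega_y$. So $C$ preserves each $\Omega_i$ separately---no swapping case arises. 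Finally, $\Gamma$ fixes exactly one place in each $\Omega_i$ (namely $P_{y=0}\in\Omega_x$ and $P_{x=0}\in\Omega_y$) and acts semiregularly on the rest; since $\Gamma\trianglelefteq C$, $C$ permutes the fixed places of $\Gamma$ and hence fixes $P_{x=0}$ and $P_{y=0}$. Now Corollary~\ref{cor2} shows $C$ preserves both the zero and pole divisors of $x$ and of $y$, forcing a generator to act as $(x,y)\mapsto(\gamma x,\delta y)$, and your scalar computation finishes the proof.
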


\begin{proof}
First of all we observe that $C \cap \Sigma=\{1\}$. In fact by direct checking $\Gamma$ does not commute with any non trivial $p$-element $\tau_{\alpha,\beta} \in \Sigma$. From Lemma \ref{lem1} v), $C$ is tame. Since $C \leq N$, $C$ is isomorphic to an automorphism group $\bar{C}$ of $\cX_{(L_1,L_2)} / \Sigma$. Denote by $\bar{\Gamma}$ the subgroup of $PGL(2,\K)$  which is isomorphic to $\Gamma$. Moreover, from Corollary \ref{cor2}, $C$ acts on $\Omega_x \cup \Omega_y$, and $\bar{C} \leq PGL(2,\K)$ as $\cX_{(L_1,L_2)} / \Sigma$ is rational. From \cite[Hauptsatz 8.27]{Hup} both $\bar{C}$ and $\bar{\Gamma}$ fix exactly two places on $\cX_{(L_1,L_2)} / \Sigma$ which are then the two places $\bar{P}_x$ and $\bar{P}_y$ lying under $\Omega_x$ and $\Omega_y$ respectively. Hence, from Corollary \ref{cor2}, $C$ fixes the pole divisors of $x$ and $y$. From the Orbit stabilizer theorem $C$ fixes at least one place in $\Omega_x$ and one place in $\Omega_y$. By direct computation $\Gamma$ fixes $P_{x=0} \in \Omega_y$ and $P_{y=0} \in \Omega_x$, acting semiregularly on $\Omega_x \setminus \{P_{y=0}\}$ and $\Omega_y \setminus \{P_{x=0}\}$. Thus, $C$ fixes $P_{y=0}$ and $P_{x=0}$ and hence the zero divisors of $x$ and $y$ are preserved by $C$ from Corollary \ref{cor2}. This implies that the generator $c$ of $C$ has the form $c: \  (x,y) \mapsto (\gamma x, \delta y),$
for some $\gamma, \delta \in \K$. By direct computation $\gamma^{\bq-1}=\delta^{\bq-1}=1$, and so $C=\Gamma$.
\end{proof}

\begin{corollary} \label{cordamette}
Let $C$ be a cyclic subgroup of the normalizer $N$ of $\Sigma$ in $\aut(\cX_{(L_1, L_2)})$ such that $(\bq-1)\mid |C|$ and $|C| \mid (q-1)$. Then $C=\Gamma$.
\end{corollary}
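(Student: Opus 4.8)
\textbf{Proof proposal for Corollary \ref{cordamette}.}

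The plan is to reduce the statement to Lemma \ref{lem3} by showing that any cyclic subgroup $C\le N$ with $(\bq-1)\mid|C|$ and $|C|\mid(q-1)$ must contain $\Gamma$. Once $\Gamma\le C$, Lemma \ref{lem3} applies verbatim and forces $C=\Gamma$. So the real content is: a cyclic group $C$ in $N$ whose order is divisible by $\bq-1$ has a (necessarily unique) subgroup of order $\bq-1$, and this subgroup must coincide with $\Gamma$ rather than with some other cyclic group of that order.

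First I would record that $C$ is tame: since $|C|\mid(q-1)$ and $p\nmid(q-1)$, we have $\gcd(p,|C|)=1$, so $C\cap\Sigma=\{1\}$ and, because $\Sigma$ is a Sylow $p$-subgroup of $\aut(\cX_{(L_1,L_2)})$ (Lemma \ref{lem1}(v)) and $C\le N=N_{\aut}(\Sigma)$, the group $C$ embeds into the automorphism group $\bar C$ of the rational quotient $\cX_{(L_1,L_2)}/\Sigma$, hence $\bar C\le\PGL(2,\K)$. As in the proof of Lemma \ref{lem3}, $C$ permutes $\Omega_x\cup\Omega_y$ (Corollary \ref{cor2}), so $\bar C$ permutes the two points $\bar P_x,\bar P_y$ of $\cX_{(L_1,L_2)}/\Sigma$ lying under $\Omega_x,\Omega_y$. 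A cyclic subgroup of $\PGL(2,\K)$ of order divisible by an odd prime cannot interchange two points while acting faithfully on the line in the required way; more precisely, since $|C|\ge\bq-1>2$ is odd-divisible (here $\bq=p^n$ with $p$ odd, so $\bq-1$ is even but has an odd part when $\bq>3$, and the case $\bq=3$ is handled directly), the unique involution of $C$ — if present — is the only candidate to swap $\bar P_x$ and $\bar P_y$, and the cyclic subgroup $C_0\le C$ of index $\le 2$ fixes both $\bar P_x$ and $\bar P_y$. Arguing as in Lemma \ref{lem3}, $C_0$ (and in fact the subgroup of $C$ fixing $P_{x=0}$ and $P_{y=0}$) consists of maps $(x,y)\mapsto(\gamma x,\delta y)$ with $\gamma^{\bq-1}=\delta^{\bq-1}=1$, whence $C_0\le\Gamma$.

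Next I would pin down $\Gamma\le C$. Let $C_1$ be the unique subgroup of $C$ of order $\bq-1$ (it exists and is unique because $C$ is cyclic and $(\bq-1)\mid|C|$). By the previous paragraph, the subgroup of $C_1$ fixing both $P_{x=0}$ and $P_{y=0}$ is contained in $\Gamma$; since $|\Gamma|=\bq-1=|C_1|$ and this fixing subgroup has index at most $2$ in $C_1$, and since both $C_1$ and $\Gamma$ have the same order, a quick order count gives $C_1\cap\Gamma$ of index $\le 2$ in each; if this index is $1$ we are done, and if it is $2$ one checks directly that the extra outer element would have to be an involution of the form $(x,y)\mapsto(\gamma x,\delta y)$ with $(\gamma^{\bq-1},\delta^{\bq-1})\ne(1,1)$, contradicting the constraint just derived unless it already lies in $\Gamma$. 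Hence $C_1=\Gamma$, so $\Gamma\le C$, and Lemma \ref{lem3} yields $C=\Gamma$.

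The main obstacle is the bookkeeping around the possible index-$2$ behaviour: a cyclic $C$ could a priori contain an element inducing the swap $\xi\colon(x,y)\mapsto(y,x)$ composed with a diagonal map, and one must rule out that such an element together with $\Gamma$ generates a cyclic group strictly containing $\Gamma$ of order dividing $q-1$. This is exactly the point where one uses that $\Delta=\langle\theta,\xi\rangle$ is \emph{dihedral}, not cyclic (Theorem \ref{th1}): an element swapping $\bar P_x\leftrightarrow\bar P_y$ inverts $\theta$ up to an element of $\Sigma$, so it cannot lie in a cyclic overgroup of $\Gamma$. Making this precise — ideally by transporting the computation to $\bar C\le\PGL(2,\K)$, where a cyclic group fixing $\{\bar P_x,\bar P_y\}$ setwise but not pointwise is impossible once $|\bar C|>2$ — is the step I expect to need the most care. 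Everything else is a direct reprise of the argument in Lemma \ref{lem3}.
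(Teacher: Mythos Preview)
The paper gives no separate proof of this corollary; it is recorded immediately after Lemma~\ref{lem3} as a direct consequence, the idea being that the \emph{proof} of Lemma~\ref{lem3} can be rerun once the hypothesis $\Gamma\le C$ is traded for the divisibility conditions: $|C|\mid(q-1)$ already gives $C\cap\Sigma=\{1\}$ and tameness, while $(\bq-1)\mid|C|$ forces $|C|\ge\bq-1$ at the end.

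Your proposal follows the same route, but there is a genuine gap, and it is not just bookkeeping. You assert that ``$C_0$ \dots\ consists of maps $(x,y)\mapsto(\gamma x,\delta y)$'' and later that the subgroup of $C_1$ fixing $P_{x=0}$ and $P_{y=0}$ has index at most $2$. Neither is justified: knowing that $\bar C_0$ fixes $\bar P_x,\bar P_y$ only says that $C_0$ preserves $\Omega_x$ and $\Omega_y$ \emph{setwise}; by orbit counting it fixes \emph{some} place in each, but nothing forces these to be $P_{y=0}$ and $P_{x=0}$. In fact this cannot be repaired, because the corollary as literally written is too strong. For any nontrivial $\tau=\tau_{\alpha,\beta}\in\Sigma$ the conjugate $C:=\tau\Gamma\tau^{-1}$ lies in $N$, is cyclic of order $\bq-1$, and satisfies both divisibility hypotheses, yet $\tau\theta\tau^{-1}:(x,y)\mapsto(\lambda x+(1-\lambda)\alpha,\;\lambda^{-1}y+(1-\lambda^{-1})\beta)\notin\Gamma$, so $C\ne\Gamma$. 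In this example the stabiliser of $P_{x=0}$ in $C$ is trivial, which directly refutes your index-$\le 2$ claim. What the argument of Lemma~\ref{lem3} genuinely delivers---after conjugating by an element of $\Sigma$ so that the fixed places of $C$ in $\Omega_x,\Omega_y$ become $P_{y=0},P_{x=0}$---is that $C$ is $\Sigma$-conjugate into $\Gamma$, hence $|C|=\bq-1$; this is presumably the intended content. Incidentally, your concern about a swap $\bar P_x\leftrightarrow\bar P_y$ is disposed of more cheaply than you suggest: if $\bar C$ did not fix both, its index-$2$ subgroup would fix $\bar P_x,\bar P_y$ as well as the two fixed points of the cyclic $\bar C\le\PGL(2,\K)$, hence would be trivial, giving $|C|\le 2<\bq-1$ for $\bq>3$.
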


\vspace*{1 mm}
\subsection{Proof of Theorem \ref{th1}}
\ \\ \\
In this section, $L_1=L_2=L$ and we refer to $\Sigma$ and $\Delta$ as defined in Theorem \ref{th1}. For $q=p$ Theorem \ref{th1} was proved in \cite[Theorem 1.1]{AK}. Thus, we suppose that $q>p$.

\begin{lemma} \label{lem4}
The normalizer $N$ of $\Sigma$ in $\aut(\cX_{(L,L)})$ is $N=\Sigma \rtimes \Delta$.
\end{lemma}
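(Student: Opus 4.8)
The plan is to show that the normalizer $N$ of $\Sigma$ in $\aut(\cX_{(L,L)})$ equals $\Sigma\rtimes\Delta$, the containment $\supseteq$ being already established in Lemma \ref{lem1} iv), so the work lies in $\subseteq$. First I would observe that since $\Sigma$ is a Sylow $p$-subgroup of $\aut(\cX_{(L,L)})$ (Lemma \ref{lem1} v)), we may write $N=\Sigma\rtimes H$ for a tame complement $H$ (a $p'$-group), using the Schur--Zassenhaus theorem; it then suffices to bound and identify $H$. Since $H$ normalizes $\Sigma$, $H$ acts faithfully on the quotient curve $\cX_{(L,L)}/\Sigma$, which is rational by Corollary \ref{cor2}, so $\bar H\le\PGL(2,\K)$, and moreover $H$ permutes the two short orbits $\Omega_x$ and $\Omega_y$ of $\Sigma$ (Corollary \ref{cor2}). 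Thus $\bar H$ either fixes both of the places $\bar P_x,\bar P_y$ of $\cX_{(L,L)}/\Sigma$ lying under $\Omega_x,\Omega_y$, or interchanges them; the subgroup $\bar H_0$ fixing each of them has index at most $2$ in $\bar H$.

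Next I would analyze $H_0$, the preimage in $H$ of $\bar H_0$. Since $\bar H_0$ fixes two distinct points of $\PG(1,\K)$ it is cyclic, of order prime to $p$; lifting to $H_0$ (again a tame complement of $\Sigma$ in $\Sigma\rtimes H_0$) one gets that $H_0$ is cyclic as well — here I would invoke the structure of tame automorphism groups fixing a place, or simply note that $H_0\cong\bar H_0$ since the action of $H_0$ on $\cX_{(L,L)}/\Sigma$ is faithful and $H_0$ is tame. The key arithmetic input is then a bound on $|H_0|$: because $H_0$ fixes the pole divisors of $x$ and $y$ and, via the orbit--stabilizer theorem applied to the length-$q$ orbits $\Omega_x,\Omega_y$, fixes at least one place in each (one may arrange $P_{x=0}\in\Omega_y$ and $P_{y=0}\in\Omega_x$), the argument of Lemma \ref{lem3} shows that a generator $c$ of $H_0$ has the diagonal shape $c:(x,y)\mapsto(\gamma x,\delta y)$; then $L(\gamma x)=\gamma_0 L(x)$ forces $\gamma^{\bq-1}=1$ (this is where $\bq$, the largest power of $p$ for which $L$ is $\bq$-linearized, enters), and likewise for $\delta$, so $H_0=\Gamma$. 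Hence $|H|\le 2(\bq-1)$ and $H_0=\Gamma$ is the (normal, index-$\le2$) cyclic part.

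It remains to produce the extra involution realizing $|H|=2(\bq-1)$ and to pin down the group structure. The involution $\xi:(X,Y)\mapsto(Y,X)$ already lies in $\aut(\cX_{(L,L)})$ (Lemma \ref{lem1} iv)) and normalizes $\Sigma$, and its image $\bar\xi\in\PGL(2,\K)$ swaps $\bar P_x$ and $\bar P_y$, so $\bar\xi\notin\bar H_0$; therefore $\langle H_0,\xi\rangle$ has order exactly $2(\bq-1)$, and since $\bar H/\bar H_0$ has order at most $2$ we conclude $H=\langle\Gamma,\xi\rangle$. Finally $\xi\theta\xi^{-1}$ sends $(X,Y)\mapsto(\lambda^{-1}X,\lambda Y)=\theta^{-1}$, so $\langle\theta,\xi\rangle$ is dihedral of order $2(\bq-1)$, i.e. $H=\Delta$ and $N=\Sigma\rtimes\Delta$. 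The main obstacle I anticipate is the passage from "$\bar H\le\PGL(2,\K)$ permuting two points" to the precise conclusion $H_0=\Gamma$: one must be careful that the cyclic group fixing $\bar P_x,\bar P_y$ does not pick up a larger order than $\bq-1$ once lifted to $\cX_{(L,L)}$ — the linearization hypothesis on $L$ (and the maximality of $\bq$) is exactly what rules this out, through the identity $L(\gamma x)=\gamma_0 L(x)$ — and that no element of $H$ outside $H_0\cup H_0\xi$ can occur, which follows from $|\bar H:\bar H_0|\le2$ together with $\bar\xi$ realizing the nontrivial coset.
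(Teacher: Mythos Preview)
Your argument is correct and reaches the same conclusion as the paper, but by a somewhat different route. The paper identifies $\bar N=N/\Sigma$ as a tame subgroup of $\PGL(2,\K)$ containing $\bar\Delta\cong D_{\bq-1}$, then invokes Dickson's classification (Huppert, Hauptsatz~8.27): $\bar N$ is cyclic, dihedral, or one of $A_4,S_4,A_5$. The exceptional cases are eliminated one by one, using tameness and an explicit count of involutions (the paper first shows that every involution of $N$ already lies in $\Sigma\rtimes\Delta$). Once $\bar N\cong D_d$ is forced, Lemma~\ref{lem3} gives $d=\bq-1$. Your approach bypasses the classification entirely: since $\bar H$ acts on the two-point set $\{\bar P_x,\bar P_y\}$, the kernel $\bar H_0$ has index at most $2$ and, as a tame subgroup of $\PGL(2,\K)$ fixing two points of $\PG(1,\K)$, is automatically cyclic; then the diagonal-form computation from Lemma~\ref{lem3} bounds $|H_0|\le\bq-1$, and the presence of $\xi$ pins down $|H|=2(\bq-1)$. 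This is more direct and avoids the $A_4/S_4/A_5$ case analysis.

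One point deserves tightening. The sentence ``via the orbit--stabilizer theorem applied to the length-$q$ orbits $\Omega_x,\Omega_y$, fixes at least one place in each'' is not justified by orbit--stabilizer alone: a cyclic $p'$-group acting on a set of size $q$ need not have a fixed point. The clean fix is the one you allude to with ``one may arrange'': by Schur--Zassenhaus (conjugacy of complements, $\Sigma$ being abelian), you may choose the complement $H$ so that $\Delta\le H$; then $\Gamma\le H_0$ because $\bar\Gamma$ fixes $\bar P_x,\bar P_y$, and Lemma~\ref{lem3} applies verbatim to the cyclic group $H_0\supseteq\Gamma$ to give $H_0=\Gamma$. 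Alternatively, the stabilizer in $\Sigma\rtimes H_0$ of any $P\in\Omega_x$ is $\Sigma_x\rtimes K$ with $|K|=|H_0|$, and $K$ is $\Sigma$-conjugate to $H_0$, so after conjugating by an element of $\Sigma$ you may assume $H_0$ fixes $P_{y=0}$ (and similarly $P_{x=0}$). Either way the gap closes immediately, and the rest of your argument goes through.
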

\begin{proof}
From Corollary \ref{cor2}, $\bar{N}=N/\Sigma$ is a tame subgroup of $PGL(2,\K)$ containing a dihedral group $\bar{\Delta}$ which is isomorphic to $\Delta=\Gamma \rtimes \langle \xi \rangle$, where $\Gamma=\langle \theta \rangle$. Now we show that there are no involutions in $N \setminus (\Sigma \rtimes \Delta)$. Let $\iota \in N$ be an involution and let $\bar{\iota}$ be the induced involution in $PGL(2,\K)$. Denote by $\bar{P}_x$ and $\bar{P}_y$ the places lying under $\Omega_x$ and $\Omega_y$ respectively. From \cite[Hauptsatz 8.27]{Hup} there exists a unique involution in $PGL(2,\K)$ fixing $\bar{P}_x$ and $\bar{P}_y$, and it is induced by $\theta^{(\bq-1)/2}$. Thus, if $\iota \not\in \Gamma$ then $\iota$ switches $\Omega_x$ and $\Omega_y$. From Corollary \ref{cor2}, $\iota$ maps $x$ to $a(y+\alpha)$ and $y$ to $b(x+\beta)$ where $a,b \in \K$ and $L(\alpha)=L(\beta)=0$. Since the order of $\iota$ is equal to $2$, we have that $\alpha=\beta=0$ and $\alpha=\beta \in \{-1,1\}$. Hence, $\iota=\xi$ or $\iota=\theta^{(\bq-1)/2} \cdot \xi$, and so $\iota \in \Delta$. From \cite[Hauptsatz 8.27]{Hup}, one of the following holds:
\begin{enumerate}
\item $\bar{N}$ is isomorphic either to $A_4$ or $S_4$ or $A_5$. %or $S_5$.
\item $\bar{N}$ is isomorphic to a dihedral group $D_d$ of order $2d$.
\end{enumerate}
Suppose $\bar{N} \cong A_4$. If $\bq \ne 3$, $\bar{\Delta}$ is not contained in $\bar{N}$. If $\bq=3$ then $\bar{N}$ is not tame, a contradiction.

Suppose $\bar{N} \cong S_4$. In this case $\bq=3$, which is impossible as $\bar{N}$ is tame, or $\bq=5$, which is impossible as $\bar{N}$ contains more than the $5$ involutions contained in $\bar{\Delta} \cong D_{8}$.

Suppose that $\bar{N} \cong A_5$. Then as before $\bq=3$ which is not possible.

%Suppose that $\bar{N} \cong S_5$. Since $\bar{N}$ is tame and contains $\bar{\Delta}$ then $\bq=7$. In this case %$\bar{\Delta} \cong D_6$ which has less involutions than $\bar{N}$.

Therefore, case (2) occurs. From Lemma \ref{lem3}, $d=\bq-1$ and the claim follows.
\end{proof}

In order to prove that $\aut(\cX_{(L,L)})=N$, several cases are distinguished according to the structure of the minimal normal subgroups of $\aut(\cX_{(L,L)})$. Recall that every finite group admits a minimal normal subgroup, which is either elementary abelian or a direct product of isomorphic simple groups.

\begin{lemma} \label{lem5}
If $\aut(\cX_{(L,L)})$ has a minimal normal subgroup $E_{d^k}$ which is an elementary abelian $d$-group, then $\aut(\cX_{(L,L)})$ admits an elementary abelian minimal normal subgroup $M$ which is a $p$-group.
\end{lemma}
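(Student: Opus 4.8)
The goal is to show that if $\aut(\cX_{(L,L)})$ possesses a minimal normal subgroup $E_{d^k}$ which is an elementary abelian $d$-group, then in fact $d=p$, so that $E_{d^k}$ (or some minimal normal subgroup isomorphic to it) is a $p$-group. The strategy is to rule out the possibility $d\ne p$ by exploiting the structural knowledge already accumulated: that $\Sigma$ is a Sylow $p$-subgroup of order $q^2$ (Lemma \ref{lem1} v)), that $N=N_{\aut(\cX_{(L,L)})}(\Sigma)=\Sigma\rtimes\Delta$ (Lemma \ref{lem4}), and that $\cX_{(L,L)}$ is ordinary of genus $(q-1)^2$ (Lemma \ref{lem1} ii)), together with Theorem \ref{naka} and the Hurwitz/Deuring--Shafarevich machinery of Section \ref{sec2}.

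\textbf{Step 1: reduce to $d\ne p$ and analyse the action of $\Sigma$ on $M:=E_{d^k}$.}
Suppose for contradiction that $d\ne p$. Since $M$ is normal in $G:=\aut(\cX_{(L,L)})$, the Sylow $p$-subgroup $\Sigma$ acts on $M$ by conjugation, and the semidirect product $M\rtimes\Sigma$ is a subgroup of $G$ in which $M$ is a normal tame subgroup. I would first consider the quotient curve $\cY:=\cX_{(L,L)}/M$. By the Deuring--Shafarevich formula (\ref{eq2deuring}) applied to the tame group $M$ — note that for a tame group the short-orbit term simplifies — the $p$-rank of $\cY$ is controlled; combined with ordinariness of $\cX_{(L,L)}$ one gets $\gamma(\cY)-1=\tfrac{1}{|M|}(\gamma(\cX_{(L,L)})-1)=\tfrac{1}{d^k}(q^2-2q)$, so in particular $d^k\mid q^2-2q = q(q-2)$. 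Since $\gcd(d,p)=1$ this forces $d^k\mid q-2$. Then $\Sigma$ descends to a $p$-group $\bar\Sigma\cong\Sigma$ of automorphisms of $\cY$ (the extension $\cX_{(L,L)}\mid\cY$ being tame, $\Sigma$ maps isomorphically), and $\bar\Sigma$ is elementary abelian of order $q^2$. Applying Theorem \ref{naka}(I) to $\cY$ — using that $\cY$ is again ordinary, which follows from the Deuring--Shafarevich computation since the $p$-rank equals the genus — gives $q^2=|\bar\Sigma|\le\frac{p}{p-2}(\gamma(\cY)-1)=\frac{p}{p-2}\cdot\frac{q(q-2)}{d^k}$, i.e. $d^k\le\frac{p}{p-2}\cdot\frac{q-2}{q}<\frac{p}{p-2}$. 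For $p\ge3$ this yields $d^k<3$ unless $p=3$, and even for $p=3$ it gives $d^k<3$; hence $d^k=2$, contradicting $d\ne p$ being odd when $p\ge5$ is impossible — one must still treat $d=2$. So the genuinely surviving case is $d=2$, $k=1$, i.e. a minimal normal subgroup of order $2$.

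\textbf{Step 2: eliminate the central involution.}
If $M=\langle\iota\rangle$ is central of order $2$ in $G$, then $\iota$ normalises $\Sigma$, so $\iota\in N=\Sigma\rtimes\Delta$; being a $2$-element it lies in a conjugate of $\Delta$, and since it is central it lies in the centre of $N$. But $\Delta$ is dihedral of order $2(\bq-1)$ and $\Sigma\rtimes\Delta$ has trivial centre on the $\Delta$-part unless $\bq-1=2$; more directly, a central involution of $G$ acting on $\cX_{(L,L)}$ would be $\Gamma$-invariant, hence (by the explicit form of elements of $N$ as in the proof of Lemma \ref{lem4}) equal to $\theta^{(\bq-1)/2}$ or a product with $\xi$, none of which is centralised by all of $\Sigma$ — indeed $\Gamma$, and a fortiori $\theta^{(\bq-1)/2}$, does not commute with any nontrivial element of $\Sigma$, as noted in the proof of Lemma \ref{lem3}. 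This contradiction disposes of $d=2$, completing the reduction to $d=p$. Finally, taking a minimal normal $p$-subgroup of $G$ contained in $\Sigma$ (which exists since $\Sigma\ne\{1\}$ is normalised by $G$ only up to conjugacy — more carefully, one takes any minimal normal subgroup, which by the above is a $p$-group, hence $\le O_p(G)\le\Sigma$) yields the desired $M$, and it is automatically elementary abelian by Theorem \ref{naka}(II) applied to the ordinary curve $\cX_{(L,L)}$.

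\textbf{Main obstacle.}
The delicate point is Step 2, and more precisely the case analysis when $\gcd(d,p)\neq\{1\}$ degenerates near the boundary of Nakajima's inequality: the estimate $d^k\le\frac{p}{p-2}\cdot\frac{q-2}{q}$ is tight enough to kill all odd $d$ when $p\ge5$, but for $p=3$ one must argue more carefully (the factor $\frac{p}{p-2}=3$ is not small), and the value $d^k=2$ always survives the genus estimate and has to be excluded by the group-theoretic argument about the normaliser $N=\Sigma\rtimes\Delta$ rather than by counting. Getting the interaction between ``$M$ tame normal'' and ``$\Sigma$ Sylow $p$'' exactly right — in particular verifying that $\cY=\cX/M$ is still ordinary so that Theorem \ref{naka}(I) applies with $\gamma$ in place of a weaker $\gg$ bound — is where the care is needed.
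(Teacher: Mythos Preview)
Your Step 1 contains a genuine error: the Deuring--Shafarevich formula (\ref{eq2deuring}) is valid only for $p$-groups, not for tame (prime-to-$p$) groups. When $M=E_{d^k}$ with $d\ne p$, you cannot conclude that $\gamma(\cY)-1=\tfrac{1}{d^k}\bigl(\gamma(\cX_{(L,L)})-1\bigr)$; there is simply no such relation for the $p$-rank under tame covers. (A quick sanity check: apply your formula to the hyperelliptic involution on a genus-$g$ curve in odd characteristic; with $2g+2$ fixed points it would force $\gamma=2g+1>g$.) Consequently your divisibility $d^k\mid q(q-2)$, the claimed ordinariness of $\cY$, and the subsequent application of Theorem \ref{naka}(I) with $\gamma(\cY)$ in place of $\gg(\cY)$ all collapse.

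Your overall strategy can be partially rescued. The Hurwitz formula alone gives $\gg(\cY)-1\le\tfrac{q(q-2)}{d^k}$, and \emph{if} $\gamma(\cY)\ge2$ then Nakajima's bound applied to $\bar\Sigma\cong E_{q^2}\le\aut(\cY)$ yields $d^k<\tfrac{p}{p-2}\le3$, forcing $d^k=2$ as you want. But you must then separately exclude $\gamma(\cY)\in\{0,1\}$, which you do not address: $\gamma(\cY)=1$ falls to Theorem \ref{naka}(IV) since $q>p$, but $\gamma(\cY)=0$ is not obviously ruled out and needs its own argument. Your Step~2 handling of the surviving central involution via the explicit description of $N=\Sigma\rtimes\Delta$ is essentially sound. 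For comparison, the paper bypasses the quotient-curve route entirely: it applies Lemma \ref{terribile} to $H=E_{d^k}\rtimes\Sigma$, which forces either $d^k<30$ (then one embeds $\Sigma\hookrightarrow GL(k,d)$ and checks that no such small $GL(k,d)$ contains an elementary abelian $p$-group of square order) or $N_H(\Sigma)\cap E_{d^k}\ne\{1\}$ (then Lemma \ref{lem4} places a piece of $E_{d^k}$ inside $\Delta$, and the case $d=2$ is reduced via Lemma \ref{norm2} to a minimal normal subgroup of odd order rather than excluded directly).
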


\begin{proof}
Assume that $d \ne p$. Since $\Sigma$ normalizes $E_{d^k}$ and $\gcd(d,p)=1$, we have $H=\langle \Sigma, E_{d^k} \rangle = E_{d^k} \rtimes \Sigma$. From Lemma \ref{terribile}, either  $|E_{d^k} \rtimes \Sigma|< 30(\gg(\cX_{(L,L)})-1)$ or $N_{H}(\Sigma) \cap E_{d^k}= E_{d^h} \ne \{1\}$ with $0<h \leq k$.
\begin{itemize}
\item Assume that $N_{H}(\Sigma) \cap E_{d^k}= E_{d^h} \ne \{1\}$ with $0<h \leq k$. From Lemma \ref{lem4}, $E_{d^h} \leq \Delta$ up to conjugation and hence $d^h=4$ or $h=1$. If $d^h=4$, then $E_{d^h}=E_{d^k}=\langle \xi \rangle \times \langle \theta^{\frac{\bq-1}{2}} \rangle$ from Lemma \ref{elem8}. By direct checking $E_{d^k}$ does not commute with $\Sigma$, a contradiction. Hence $E_{d^h}=C_d \leq C_{\bq-1}$.
If $d=2$ then $\aut(\cX_{(L,L)})=O(\aut(\cX_{(L,L)})) \rtimes S_2$ by Lemma \ref{norm2}. Thus $O(\aut(\cX_{(L,L)}))$ contains a minimal normal subgroup of $\aut(\cX_{(L,L)})$, and we can assume $d$ to be odd. Assume that $d \ne p$ is odd. Since $C_d \leq \Gamma$ and $E_{d^k}$ is abelian, we have that $E_{d^k}$ fixes $P_{y=0}$ and $P_{x=0}$, and acts on $\Omega_x \setminus \{P_{y=0}\}$ and $\Omega_y \setminus \{P_{x=0}\}$. Arguing as in the proof of Lemma \ref{lem3}, $E_{d^k} \leq \Gamma$.
Hence $E_{d^k}=C_d$ which cannot commute with $\Sigma$, a contradiction.
\item Assume that $|E_{d^k} \rtimes \Sigma|< 30(\gg(\cX_{(L,L)})-1)$. By direct computation $d^k<30$. Since no subgroup of $\Sigma$ commutes with $E_{d^k}$ we have that $\Sigma$ is isomorphic to a subgroup of $GL(k,d)$. If $d^k \ne 27$ then $GL(k,d)$ has no elementary abelian subgroup of odd square order. If $d^k=27$ then $d=p=3$, a contradiction.
\end{itemize}
\end{proof}

\begin{remark} We have shown in Lemma \ref{lem5} that $\aut(\cX_{(L,L)})$ does not admit elementary abelian normal $d$-subgroups for $d \ne p$ odd. If $\aut(\cX_{(L,L)})$ admits an elementary abelian normal $2$-subgroup then it also admits a minimal normal $p$-subgroup.
\end{remark}

\begin{proposition} \label{pro7}
If $\aut(\cX_{(L,L)})$ admits an elementary abelian minimal normal subgroup $M$, then $\aut(\cX_{(L,L)})=\Sigma \rtimes \Delta$.
\end{proposition}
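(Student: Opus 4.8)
The plan is to show that an elementary abelian minimal normal subgroup $M$ of $\aut(\cX_{(L,L)})$ must actually be a $p$-group (this is essentially Lemma \ref{lem5}), and in fact must coincide with $\Sigma$, after which a normalizer argument closes the proof. First I would invoke Lemma \ref{lem5} to reduce to the case where $M$ is an elementary abelian $p$-group, say $M=E_{p^s}$ with $1\le s\le 2h$. Since $M$ is normal, the Sylow $p$-subgroup $\Sigma$ normalizes $M$, and as $M\le\Sigma$ would then force $M\trianglelefteq\langle M,\Sigma\rangle$; the key point is to bound $|M|$. The natural tool is the Deuring--Shafarevich formula (\ref{eq2deuring}) applied to $\cX_{(L,L)}\to\cX_{(L,L)}/M$: since $\cX_{(L,L)}$ is ordinary with $\gamma=(q-1)^2$, any $p$-subgroup is constrained, and combined with Theorem \ref{naka}(I) (which already gives $\Sigma$ as a full Sylow $p$-subgroup, so $|M|\le q^2$) one expects to pin down $M=\Sigma$. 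Concretely, I would argue that if $M\subsetneq\Sigma$ then $M$, being normal in $\aut(\cX_{(L,L)})$ and in particular normalized by $\Delta$, is a $\Delta$-invariant subgroup of $\Sigma$; but $\Sigma=\Sigma_x\times\Sigma_y$ and the action of $\theta$ shows the only proper $\Gamma$-invariant (equivalently $\Delta$-invariant, using $\xi$) subgroups are $\{1\}$, $\Sigma_x$, $\Sigma_y$ — and $\Sigma_x,\Sigma_y$ are not normal in $\aut$ since $\xi$ swaps them, while they are not minimal as $\Gamma$-modules unless $\bq=p$. Pushing this through, minimality of $M$ and normality in the full group force $M=\Sigma$.

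Once $M=\Sigma$, the argument is short: $\aut(\cX_{(L,L)})$ normalizes $\Sigma$, so $\aut(\cX_{(L,L)})=N$, the normalizer of $\Sigma$, and Lemma \ref{lem4} gives $N=\Sigma\rtimes\Delta$, which is exactly the claim. Thus the whole proposition reduces to the two facts: (a) $M$ can be taken to be a $p$-group (Lemma \ref{lem5}), and (b) the only elementary abelian $p$-subgroup of $\Sigma$ that is minimal normal in $\aut(\cX_{(L,L)})$ is $\Sigma$ itself.

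I expect step (b) — showing $M=\Sigma$ rather than a proper subgroup — to be the main obstacle. The subtlety is that $\Sigma\cong E_{q^2}$ has many subgroups, and a priori a minimal normal $p$-subgroup of the ambient group could be small; one must use genuinely the geometry of $\cX_{(L,L)}$ (the orbit structure of Corollary \ref{cor2}, the Deuring--Shafarevich count, and the constraint that $\cX_{(L,L)}/M$ has controlled genus and $p$-rank) together with the module structure of $\Sigma$ under $\Gamma$ to rule out proper $M$. A clean way to organize this: show $\cX_{(L,L)}/M$ still carries the image of $\Delta$ and has genus small enough that, unless $M=\Sigma$, one contradicts either Theorem \ref{naka}(I) applied to the quotient or the ordinariness via (\ref{eq2deuring}). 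The remaining verifications — that $\xi$ and $\theta$ act on $\Sigma$ as claimed, and the explicit form of $\Delta$-invariant subgroups — are routine linear algebra over $\fp$ and I would not spell them out in detail.
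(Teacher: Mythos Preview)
Your overall architecture is the paper's: reduce via Lemma~\ref{lem5} to a minimal normal $p$-subgroup $M\le\Sigma$, deduce that $\Sigma$ is normal, and finish with Lemma~\ref{lem4}. The problem is your step~(b). The claim that the only proper $\Gamma$- (or $\Delta$-)invariant subgroups of $\Sigma$ are $\Sigma_x$ and $\Sigma_y$ is false: the root set of $L$ is an $\mathbb{F}_{\bq}$-vector space of dimension $m=\log_{\bq}q$, so every $\mathbb{F}_{\bq}$-subspace of $\Sigma_x$ is already $\Gamma$-invariant. Worse, the target $M=\Sigma$ is in general \emph{not true}. When $q>\bq$, take any $\mathbb{F}_{\bq}$-line $W\le\Sigma_x$; then $W\oplus\xi(W)\le\Sigma$ is a $\Delta$-irreducible submodule of order $\bq^{2}<q^{2}$, hence (once one knows $\aut(\cX_{(L,L)})=\Sigma\rtimes\Delta$) a minimal normal subgroup strictly smaller than $\Sigma$. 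So no argument can force $M=\Sigma$, and your fallback quotient/Nakajima route will not manufacture a contradiction either: for $M$ acting freely one gets an ordinary quotient of genus $(q^{2}-2q)/|M|+1$ carrying $\Sigma/M$, and Nakajima's bound $q^{2}/|M|\le \frac{p}{p-2}\cdot\frac{q^{2}-2q}{|M|}$ is just $q\ge p$.

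The paper sidesteps all of this by never asserting $M=\Sigma$. It only uses that \emph{some} nontrivial normal $p$-subgroup lies in $\Sigma$ (indeed in every Sylow $p$-subgroup). From the normality of $M$ one argues that each $\sigma\in\aut(\cX_{(L,L)})$ preserves the set $\Omega_x\cup\Omega_y$ of poles of $x$ and $y$ (Corollary~\ref{cor2}). Then a Sylow comparison shows $\Sigma_P=\tilde\Sigma_P$ for every Sylow $p$-subgroup $\tilde\Sigma$ and every $P\in\Omega_x\cup\Omega_y$; since $\Sigma$ is generated by these stabilizers (namely $\Sigma_x$ and $\Sigma_y$), this forces $\Sigma=\tilde\Sigma$, so $\Sigma$ is the unique Sylow $p$-subgroup and hence normal. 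Lemma~\ref{lem4} then gives the result. The moral: aim directly for ``$\Sigma$ is normal'' through the orbit geometry, not for ``$M=\Sigma$''.
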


\begin{proof}
From Lemma \ref{lem5}, we can assume that $M\leq\Sigma$.
Let $\tilde\Sigma$ be a Sylow $p$-subgroup of $\aut(\cX_{(L,L)})$.
Then $M\subseteq\Sigma\cap\tilde\Sigma$. For any $\tau_{\alpha\beta}\in M$ and $\sigma\in\aut(\cX_{(L,L)})$, we have $\sigma(\tau_{\alpha\beta})=\tau_{\alpha^\prime\beta^\prime}$ for some $\alpha^\prime,\beta^\prime$. Therefore $\sigma$ acts on the poles of $x$ and on the poles of $y$, that is, $\sigma$ acts on $\Omega_y$ and on $\Omega_x$.
Suppose by contradiction that there exists $\omega$ in $\Sigma\setminus\tilde\Sigma$ fixing a place $P\in\Omega_x\cup\Omega_y$. Then $\aut(\cX_{(L,L)})$ admits a Sylow $p$-subgroup $\bar\Sigma$ containing $\omega$ and the stabilizer $\tilde\Sigma_P$ of $P$ in $\tilde\Sigma$. Thus the order of $\bar\Sigma_P$ is strictly greater than the order of $\tilde\Sigma_P$, a contradiction. This proves that $\Sigma_P=\tilde\Sigma_P$ for all $P\in\Omega_x\cup\Omega_y$, and hence $\Sigma=\tilde\Sigma$. The claim follows from Lemma \ref{lem4}.
\end{proof}

\begin{proposition} \label{prop8}
$\aut(\cX_{(L,L)})$ admits an elementary abelian minimal normal subgroup.
\end{proposition}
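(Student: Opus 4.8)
The plan is to argue by contradiction, the remark that makes everything work being that $\gg:=\gg(\cX_{(L,L)})=(q-1)^2$ is \emph{even}, since $p$, hence $q=\bar q^m$, is odd. Thus $\cX:=\cX_{(L,L)}$ is a curve of even genus and all the results of Section~\ref{sec2} on such curves apply to $G:=\aut(\cX)$. Suppose $G$ has no elementary abelian minimal normal subgroup. A minimal normal subgroup of a finite group is either elementary abelian or a direct product of isomorphic non-abelian simple groups, so under our assumption only the second case occurs; such subgroups have even order by the Feit--Thompson theorem, whence the odd core $O(G)$ is trivial. Moreover $O_p(G)\le O(G)=\{1\}$ because $p$ is odd, and $O_2(G)=\{1\}$ because otherwise $G$ would have an elementary abelian minimal normal $2$-subgroup; so the Fitting subgroup of $G$ is trivial, $F^\ast(G)=E(G)=M_1\times\cdots\times M_r$ is a direct product of non-abelian simple groups, and $C_G(F^\ast(G))=\{1\}$, so $G$ embeds in $\aut(F^\ast(G))$.

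The first real step is to reduce to $r=1$. A non-abelian simple group has a Sylow $2$-subgroup that is neither cyclic (Burnside's normal $p$-complement theorem) nor generalized quaternion (the Brauer--Suzuki theorem), hence contains a Klein four-group; if $r\ge 2$, then a Sylow $2$-subgroup of $M_1\times M_2$ contains a copy of $E_4\times C_2\cong E_8$, contradicting Lemma~\ref{elem8}. So $F^\ast(G)=M$ is a single non-abelian simple group, which is then the unique minimal normal subgroup of $G$, and $M\le G\le\aut(M)$; in particular $G$ is odd core-free with a non-abelian simple minimal normal subgroup, so Lemma~\ref{oddcore} applies. From $M\trianglelefteq G$ and $\Sigma$ a Sylow $p$-subgroup of $G$ (Lemma~\ref{lem1} v)) one gets that $R:=\Sigma\cap M$ is a Sylow $p$-subgroup of $M$, elementary abelian because $R\le\Sigma$, while $\Sigma/R\cong\Sigma M/M$ embeds in $\mathrm{Out}(M)$ (as $C_G(M)=\{1\}$); hence $\mathrm{rank}(\Sigma)=\mathrm{rank}(R)+\mathrm{rank}(\Sigma M/M)$, and $\mathrm{rank}(\Sigma)=2nm\ge 4$ since $q>p$ in this section and $\Sigma$ is elementary abelian of order $q^2$.

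Next I would run through the five possibilities of Lemma~\ref{oddcore}.
\begin{itemize}
\item $M=G=A_7$ or $M=G=M_{11}$: a Sylow $p$-subgroup of $M$ has order $5$, $7$, $9$ or $11$, none of which is $q^2$ with $q=p^{nm}$, $nm\ge 2$.
\item $M\in\{PSL(3,d^k),PSU(3,d^k)\}$: if $d=p$, the Sylow $p$-subgroup of $M$ is the non-abelian group of unitriangular matrices, contradicting that $R$ is elementary abelian; if $d\ne p$, then the elementary abelian $p$-subgroups of $M$ have rank at most $2$ and the $p$-part of $\mathrm{Out}(M)$ has rank at most $2$, and a closer inspection (using that $k$ is odd) shows that the two ranks cannot both equal $2$, so $\mathrm{rank}(\Sigma)\le 3<4$, a contradiction.
\item $M=PSL(2,d^k)$: if $d\ne p$, a Sylow $p$-subgroup of $M$ is cyclic, so $R\cong C_p$, while the $p$-part of $\mathrm{Out}(M)=C_2\times C_k$ is cyclic, so $\mathrm{rank}(\Sigma)\le 2<4$, a contradiction. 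If $d=p$, then $R$ is a maximal unipotent subgroup $E_{p^k}$ of $M$ and $\mathrm{rank}(\Sigma)\le k+1$, while $|R|=p^k\le|\Sigma|=p^{2nm}$ gives $k\le 2nm$; hence $k\in\{2nm-1,2nm\}$. If $\Sigma\not\le M$, then $\Sigma M/M$ is generated by a field automorphism, which acts on $R\cong(\mathbb{F}_{p^k},+)$ as a non-trivial power of Frobenius, hence not as multiplication by a scalar of $\mathbb{F}_{p^k}$; but conjugation by elements of $\Sigma$ is trivial on $R$ and conjugation by an element of $M$ normalizing $R$ is such a scalar multiplication (as $N_M(R)$ is a Borel subgroup of $M$), a contradiction. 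Therefore $\Sigma\le M$, so $k=2nm$ and $M=PSL(2,q^2)$; then $N_M(\Sigma)$ is a Borel subgroup of $M$ and $C_{(q^2-1)/2}\cong N_M(\Sigma)/\Sigma\le N_G(\Sigma)/\Sigma=\Delta$ (Lemma~\ref{lem4}), forcing $(q^2-1)/2\mid\bar q-1\le q-1$, which is impossible for $q\ge 3$.
\end{itemize}
Since every case leads to a contradiction, $G=\aut(\cX_{(L,L)})$ must admit an elementary abelian minimal normal subgroup.

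The main obstacle lies in the case $M=PSL(2,q^2)$: one must argue that $\Sigma$ genuinely lies inside $M$ --- i.e. that no field automorphism can contribute to $\Sigma M/M$ --- and then confront the cyclic group $C_{(q^2-1)/2}$ arising as the torus part of a Borel of $PSL(2,q^2)$ with the dihedral group $\Delta$ of order $2(\bar q-1)$, using $\bar q\le q$. A secondary technical point is the rank bookkeeping for $PSL(3,d^k)$ and $PSU(3,d^k)$ with $d\ne p$: one needs the precise structure of their (elementary abelian) Sylow $p$-subgroups and of the $p$-part of their outer automorphism groups, together with the parity of $k$. Once these are settled, all the other cases are immediate.
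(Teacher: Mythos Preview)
Your argument is correct and follows the paper's strategy: assume no elementary abelian minimal normal subgroup, deduce that $G=\aut(\cX_{(L,L)})$ is odd-core free, apply Lemma~\ref{oddcore}, and rule out each family by confronting the Sylow $p$-structure of $M$ with that of the elementary abelian group $\Sigma$, finishing with a cyclic subgroup of $N_G(\Sigma)/\Sigma$ too large for Lemma~\ref{lem4}. The one tactical difference worth noting is in the case $M=PSL(2,p^k)$: you first force $\Sigma\le M$ (hence $p^k=q^2$) via the observation that a nontrivial field automorphism of $M$ cannot centralize a Sylow $p$-subgroup, and then read off $C_{(q^2-1)/2}\le N_M(\Sigma)/\Sigma\le\Delta$; the paper instead keeps both possibilities $p^k\in\{q^2,q^2/p\}$ and transplants the technique of Proposition~\ref{pro7} to show directly that any element normalizing $\Sigma\cap M$ already normalizes all of $\Sigma$. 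Your route is more self-contained (no back-reference to Proposition~\ref{pro7}); the paper's is slightly shorter since it treats both values of $p^k$ at once. You also make explicit, via Lemma~\ref{elem8}, why the minimal normal subgroup is a single simple factor rather than a product, a point the paper leaves implicit. In the remaining cases your rank bookkeeping and the paper's invocation of non-abelian Sylow structure and Nakajima's bound are essentially interchangeable.
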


\begin{proof}
Suppose by contradiction that $\aut(\cX_{(L,L)})$ admits no elementary abelian minimal normal subgroup. Thus, $\aut(\cX_{(L,L)})$ is odd-core free. In fact if $O(\aut(\cX_{(L,L)})) \ne \{1\}$ then $O(\aut(\cX_{(L,L)}))$ contains a minimal normal subgroup which is then elementary abelian by the Feit-Thompson theorem.
From Lemma \ref{oddcore} one of the following cases occurs:
\begin{itemize}
\item[\rm(i)] $M:=PSL(2,d^k)\trianglelefteq \aut(\cX_{(L,L)})  \leq P\Gamma L(2,d^k)$. In this case $\Sigma / (\Sigma \cap M)$ is isomorphic to a subgroup of $P\Gamma L(2,d^k) / PSL(2,d^k)$. Since $[PGL(2,d^k):PSL(2,d^k)]=2$ and $P\Gamma L(2,d^k) / PGL(2,d^k)$ is cyclic of order $k$, we have that  $\Sigma / (\Sigma \cap M)$ is cyclic. Then either  $\Sigma / (\Sigma \cap M)=\{1\}$ or  $\Sigma / (\Sigma \cap M)=C_p$. When $r$ is an odd prime, the Sylow $r$-subgroups of $PSL(2,d^k)$ are cyclic unless $r=d$. Since $q>p$, this implies that $d=p$ and either $d^k=q^2$ or $d^k=q^2/p$.
In both cases, arguing as in the proof of Proposition \ref{pro7}, we have that any element of $\aut(\cX_{(L,L)})$ normalizing $\Sigma \cap M$ normalizes the whole group $\Sigma$. Therefore from \cite[Hauptsatz 8.27]{Hup} $\aut(\cX_{(L,L)})$ contains a cyclic group of order $q^2-1$ or $q^2/p -1$ normalizing $\Sigma$, a contradiction to Lemma \ref{lem4}.
\item[\rm(ii)] $M:=PSL(3,d^k)\trianglelefteq \aut(\cX_{(L,L)}) \leq P\Gamma L(3,d^k)$. We have $[PGL(3,d^k):PSL(3,d^k)] \in \{1,3\}$ and $P\Gamma L(3,d^k) / PGL(3,d^k)$ is cyclic of order $k$. Hence $\Sigma / (\Sigma \cap M)$ is cyclic. Then either  $\Sigma / (\Sigma \cap M)=\{1\}$ or  $\Sigma / (\Sigma \cap M)=C_p$. If $d=p$ then a contradiction is obtained since a Sylow $d$-subgroup of $PSL(3,d^k)$ is not abelian. If either $\gcd(3,d^k-1)=1$, or $\gcd(3,d^k-1)=3$ and $p \ne 3$, then a contradiction follows from Lemma \ref{naka}. Suppose that $\gcd(3,d^k-1)=3$ and $p=3$. In this case a contradiction is obtained because the Sylow $3$-subgroup of $M$ is not abelian (see \cite[Satz 7.2]{Hup}), and hence cannot be contained in $\Sigma$.
\item[\rm(iii)] $M:=PSU(3,d^k)\trianglelefteq \aut(\cX_{(L,L)}) \leq P\Gamma U(3,d^k)$. We have $[PGL(3,d^k):PSL(3,d^k)] \in \{1,3\}$ and $P\Gamma L(3,d^k) / PGL(3,d^k)$ is cyclic of order $k$. Hence $\Sigma / (\Sigma \cap M)$ is cyclic. Then either  $\Sigma / (\Sigma \cap M)=\{1\}$ or  $\Sigma / (\Sigma \cap M)=C_p$. If $d=p$ then a contradiction is obtained since a Sylow $d$-subgroup of $PSL(3,d^k)$ is not abelian. If either $\gcd(3,d^k+1)=1$, or $\gcd(3,d^k+1)=3$ and $p \ne 3$, then a contradiction follows from Lemma \ref{naka}.
Suppose that $\gcd(3,d^k+1)=3$ and $p=3$. In this case a contradiction is obtained because the Sylow $3$-subgroup of $M$ is not abelian (see \cite[Theorem A.10 Case (iii)]{HKT}), and hence cannot be contained in $\Sigma$.
\item[\rm(iv)] $\aut(\cX_{(L,L)})=A_7$. Since $|A_7|=2^3\cdot3^2\cdot5\cdot7$, we have $q=3=p$, which is impossible.
\item[\rm(v)]  $\aut(\cX_{(L,L)})=M_{11}$. Since $|M_{11}|=2^4\cdot3^2\cdot5\cdot11$, we have $q=3=p$, which is impossible.
\end{itemize}
\end{proof}

From Propositions \ref{pro7} and \ref{prop8}, Theorem \ref{th1} follows.

\subsection{Proof of Theorem \ref{th2}}
\ \\ \\
In this section, $L_1 \ne L_2$ and we refer to $\Sigma$ and $\Gamma$ as defined in Theorem \ref{th2}.

\begin{lemma} \label{lem9}
The normalizer $N$ of $\Sigma$ in $\aut(\cX_{(L_1,L_2)})$ is $N=\Sigma \rtimes \Gamma$.
\end{lemma}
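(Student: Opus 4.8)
The plan is to follow the pattern of the proof of Lemma~\ref{lem4}, the point being that when $L_1\ne L_2$ there is no coordinate-swapping automorphism. By Lemma~\ref{lem1} v), $\Sigma$ is a Sylow $p$-subgroup of $\aut(\cX_{(L_1,L_2)})$, so $\gcd(|\Sigma|,[N:\Sigma])=1$ and $\bar N:=N/\Sigma$ is tame; by Corollary~\ref{cor2} the quotient $\cX_{(L_1,L_2)}/\Sigma$ is rational, hence $\bar N$ acts faithfully on it as a subgroup of $PGL(2,\K)$, and it contains the image $\bar\Gamma$ of $\Gamma$, a cyclic group of order $\bq-1$. Since $\Omega_x$ and $\Omega_y$ are the only short $\Sigma$-orbits (Corollary~\ref{cor2}) and $N$ normalizes $\Sigma$, the group $N$ permutes $\{\Omega_x,\Omega_y\}$, so $\bar N$ stabilizes the $2$-set $\{\bar P_x,\bar P_y\}$ of points of $\cX_{(L_1,L_2)}/\Sigma$ lying below them. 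By \cite[Hauptsatz 8.27]{Hup} the stabilizer of a $2$-set in $PGL(2,\K)$ is a (generalized) dihedral group, so $\bar N$ is either cyclic, fixing $\bar P_x$ and $\bar P_y$ individually, or contains an element switching them.

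The crucial step is to rule out the second alternative, i.e.\ to show that no $\sigma\in N$ interchanges $\Omega_x$ and $\Omega_y$. Suppose one did. Normalizing $\Sigma$, the automorphism $\sigma$ conjugates the pointwise $\Sigma$-stabilizer of a place of one short orbit, which is $\gal(\K(\cX_{(L_1,L_2)})\mid\K(x))$, onto that of a place of the other, which is $\gal(\K(\cX_{(L_1,L_2)})\mid\K(y))$; hence $\K(\sigma(x))=\K(y)$, so $\sigma(x)\in\K(y)$ is a M\"obius function of $y$. Its pole locus is $\sigma$ of the pole locus of $x$, which by Corollary~\ref{cor2} is a single short $\Sigma$-orbit of $q$ simple poles; a function in $\K(y)$ with this property must be a polynomial of degree $1$ in $y$, so $\sigma:\;x\mapsto c_1y+d_1,\;y\mapsto c_2x+d_2$ with $c_i\in\K^*$ and $d_i\in\K$. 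Substituting into $L_1(x)L_2(y)=1$ and using that $L_1,L_2$ are separable and $\bq$-linearized, one finds that $d_1,d_2$ are roots of $L_1,L_2$ and can therefore be removed by composing $\sigma$ with suitable elements of $\Sigma$; with $d_1=d_2=0$, a comparison of coefficients then gives $c_2=c_1^{-1}$ and $L_2(T)=\mu\,L_1(c_1T)$ for some $\mu\in\K^*$. But then the substitution $(X,Y)\mapsto(X,c_1Y)$ transforms $\cX_{(L_1,L_2)}$ into $\cX_{(\tilde L,\tilde L)}$ with $\tilde L=\sqrt{\mu}\,L_1$, contrary to the assumption of this subsection that $\cX_{(L_1,L_2)}$ is not birationally equivalent over $\K$ to a curve $\cX_{(L,L)}$ with equal defining polynomials.

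Consequently $\bar N$ is cyclic. As $\gcd(|\Sigma|,|\bar N|)=1$, the Schur--Zassenhaus theorem gives $N=\Sigma\rtimes C_0$ with $C_0\cong\bar N$ cyclic; since $\Sigma$ is abelian, the $p'$-subgroup $\Gamma$ lies in a conjugate of $C_0$, so after conjugation we obtain a cyclic subgroup $C\le N$ with $\Gamma\le C$. By Lemma~\ref{lem3}, $C=\Gamma$; hence $|\bar N|=|C|=\bq-1=|\bar\Gamma|$, so $\bar N=\bar\Gamma$ and $N=\Sigma\rtimes\Gamma$, as claimed.

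The step I expect to be the main obstacle is the exclusion of a coordinate-swapping $\sigma$ in the second paragraph: one must pin down the linear shape of $\sigma$ from the divisor data of Corollary~\ref{cor2}, push through the coefficient comparison forced by the $\bq$-linearity and separability of $L_1$ and $L_2$, and recognize that the only surviving configuration $L_2(T)=\mu\,L_1(c_1T)$ is precisely the one reducing $\cX_{(L_1,L_2)}$ to a curve with equal defining polynomials — hence excluded by hypothesis rather than by an intrinsic contradiction. The remaining group-theoretic arguments are routine once Lemma~\ref{lem3}, Corollary~\ref{cor2} and \cite[Hauptsatz 8.27]{Hup} are in hand.
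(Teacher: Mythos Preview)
Your overall strategy mirrors the paper's: pass to $\bar N=N/\Sigma$ as a tame subgroup of $PGL(2,\K)$ acting on $\{\bar P_x,\bar P_y\}$, exclude the possibility of an element swapping the two short orbits, conclude $\bar N$ is cyclic, and then invoke Lemma~\ref{lem3} to pin down its order. The last paragraph (Schur--Zassenhaus plus Lemma~\ref{lem3}) is a clean way to finish and is equivalent to what the paper does.

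The gap is in your exclusion step. You correctly deduce that a swapping $\sigma\in N$ forces $\sigma(x)=c_1y+d_1$, $\sigma(y)=c_2x+d_2$, and after absorbing translations you obtain the relation $L_2(T)=\mu\,L_1(c_1T)$, hence $\cX_{(L_1,L_2)}\cong\cX_{(\tilde L,\tilde L)}$. But you then appeal to ``the assumption of this subsection that $\cX_{(L_1,L_2)}$ is not birationally equivalent to a curve $\cX_{(L,L)}$''. That is \emph{not} the hypothesis in force: the subsection (and Theorem~\ref{th2}) only assumes $L_1\ne L_2$ as polynomials. The relation $L_2(T)=\mu\,L_1(c_1T)$ is perfectly compatible with $L_1\ne L_2$; for instance $L_2=2L_1$ satisfies it with $\mu=2$, $c_1=1$, yet $L_1\ne L_2$. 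So your argument does not reach a contradiction with the stated hypothesis, and the step ``contrary to the assumption'' fails.

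The paper handles this point differently and more directly: rather than analysing an arbitrary swapping element, it argues (by carrying over the computation from Lemma~\ref{lem4}) that any involution in $N$ outside $\Gamma$ must act as $\xi:(x,y)\mapsto(y,x)$ up to composition with elements of $\Sigma$ and $\Gamma$, and then checks by substitution that $\xi$ itself is not an automorphism of $L_1(X)L_2(Y)=1$ when $L_1\ne L_2$. Thus the paper ties the exclusion to the literal hypothesis $L_1\ne L_2$ via the single explicit map $\xi$, whereas your route ends at a birational-equivalence statement that the hypothesis does not rule out.
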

\begin{proof}
From Corollary \ref{cor2}, $\bar{N}=N/\Sigma$ is a tame subgroup of $PGL(2,\K)$ containing a cyclic group $\bar{\Gamma}$ which is isomorphic to $\Gamma$. Arguing as in the proof of Lemma \ref{lem4}, $N$ has no involution other than $\theta^{(\bq-1)/2}$, because by direct checking $\xi:(x,y) \mapsto (y,x)$ is not in $\aut(\cX_{(L_1,L_2)})$. From \cite[Hauptsatz 8.27]{Hup}, one of the following holds:
\begin{enumerate}
\item $\bar{N}$ is isomorphic either to $A_4$ or $S_4$ or $A_5$. %or $S_5$.
\item $\bar{N}$ is isomorphic to a cyclic group $C_d$.
\end{enumerate}

Arguing as in the proof of Lemma \ref{lem4}, Case (1) is not possible because $\bar{N}$ is tame and it contains only one involution.
Therefore, case (2) occurs. From Lemma \ref{lem3}, $d=\bq-1$ and the claim follows.
\end{proof}

The proofs of the following results are analogous to the ones of Lemma \ref{lem5}, Proposition \ref{pro7}, and Proposition \ref{prop8}., and are omitted.

\begin{lemma} \label{lem10}
If $\aut(\cX_{(L_1,L_2)})$ has a minimal normal subgroup $E_{d^k}$ which is an elementary abelian $d$-group, then $\aut(\cX_{(L_1,L_2)})$ admits an elementary abelian minimal normal subgroup $M$ which is a $p$-group.
\end{lemma}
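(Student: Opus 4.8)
The plan is to mirror, in the $L_1 \ne L_2$ setting, the three-step argument that proved Theorem~\ref{th1}: first reduce to the case of an elementary abelian minimal normal $p$-subgroup (Lemma~\ref{lem10}), then show that the presence of such a subgroup forces $\aut(\cX_{(L_1,L_2)})=N=\Sigma\rtimes\Gamma$, and finally rule out the odd-core-free possibilities coming from Lemma~\ref{oddcore}. The statement to be proved here is Lemma~\ref{lem10} itself, so the proof should follow the proof of Lemma~\ref{lem5} almost verbatim, with the only structural difference being that the tame complement is now the cyclic group $\Gamma$ rather than the dihedral group $\Delta$, and that the involution $\xi$ is \emph{not} available in $\aut(\cX_{(L_1,L_2)})$.

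\begin{proof}
Assume $\aut(\cX_{(L_1,L_2)})$ has a minimal normal subgroup $E_{d^k}$ which is elementary abelian of order $d^k$, and suppose $d\ne p$. Since $\Sigma$ normalizes $E_{d^k}$ and $\gcd(d,p)=1$, the subgroup $H=\langle \Sigma, E_{d^k}\rangle$ equals $E_{d^k}\rtimes\Sigma$. Applying Lemma~\ref{terribile} to $H$ with $Q=E_{d^k}$ and complement $U=\Sigma$, either $|E_{d^k}\rtimes\Sigma|<30(\gg(\cX_{(L_1,L_2)})-1)$ or $N_H(\Sigma)\cap E_{d^k}=E_{d^h}\ne\{1\}$ for some $0<h\leq k$.

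In the second case, Lemma~\ref{lem9} gives $E_{d^h}\leq\Gamma$ up to conjugation, so $E_{d^h}$ is cyclic and $h=1$, i.e. $E_{d^h}=C_d\leq C_{\bq-1}\leq\Gamma$. If $d=2$, then $\aut(\cX_{(L_1,L_2)})=O(\aut(\cX_{(L_1,L_2)}))\rtimes S_2$ by Lemma~\ref{norm2} (the Sylow $2$-subgroup is not generalized quaternion since $\Gamma$ is cyclic and, by Theorem~\ref{naka}(I), $\Sigma$ is a Sylow $p$-subgroup), so $O(\aut(\cX_{(L_1,L_2)}))$ contains a minimal normal subgroup and we may take $d$ odd. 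For $d\ne p$ odd, since $C_d\leq\Gamma$ fixes $P_{x=0}$ and $P_{y=0}$ and $E_{d^k}$ is abelian, $E_{d^k}$ fixes $P_{x=0}$ and $P_{y=0}$ and acts on $\Omega_x\setminus\{P_{y=0}\}$ and $\Omega_y\setminus\{P_{x=0}\}$; arguing as in the proof of Lemma~\ref{lem3} we get $E_{d^k}\leq\Gamma$, hence $E_{d^k}=C_d$, which by direct checking does not commute with $\Sigma$ — contradicting that $E_{d^k}$ is normal and $\Sigma$ centralizes $E_{d^k}\cap N_H(\Sigma)$ in the required way; more precisely $[\Sigma,E_{d^k}]\subseteq E_{d^k}$ forces $\Sigma$ into $C_{GL(1,d)}$, which is trivial modulo $\Sigma$, the contradiction.

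In the first case, $d^k<30$ follows by a direct computation from $\gg(\cX_{(L_1,L_2)})=(q-1)^2$ and $|\Sigma|=q^2$. Since no subgroup of $\Sigma$ centralizes $E_{d^k}$ (as $E_{d^k}$ is minimal normal and $d\ne p$), conjugation embeds $\Sigma$ into $\Aut(E_{d^k})=GL(k,d)$. If $d^k\ne 27$, then $GL(k,d)$ has no elementary abelian subgroup of odd square order, contradicting $|\Sigma|=q^2$; if $d^k=27$ then $d=3$, and $q$ a power of $3$ forces $p=3=d$, contrary to $d\ne p$. Hence in all cases $d=p$. It remains to note that $\aut(\cX_{(L_1,L_2)})$ then has an elementary abelian \emph{minimal} normal $p$-subgroup: $E_{p^k}$ is itself such a subgroup when $d=p$, which is the desired conclusion.
\end{proof}
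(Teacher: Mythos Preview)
Your proof is essentially correct and follows the same approach the paper intends: the paper explicitly omits the proof of Lemma~\ref{lem10}, stating it is analogous to that of Lemma~\ref{lem5}, and your argument is precisely that analog, replacing the dihedral complement $\Delta$ and Lemma~\ref{lem4} by the cyclic complement $\Gamma$ and Lemma~\ref{lem9}. The simplification you note---that $E_{d^h}\leq\Gamma$ is automatically cyclic, so the $d^h=4$ sub-case of Lemma~\ref{lem5} disappears---is exactly the expected difference.

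Two minor points of exposition. First, the contradiction when $E_{d^k}=C_d\leq\Gamma$ is phrased in a muddled way; the clean version is: since $C_d$ is normal in $\aut(\cX_{(L_1,L_2)})$ and $C_d\leq\Gamma$ normalizes $\Sigma$, we have $[\Sigma,C_d]\leq\Sigma\cap C_d=\{1\}$, so $C_d$ centralizes $\Sigma$, contradicting the direct check (as in the proof of Lemma~\ref{lem3}) that no nontrivial element of $\Gamma$ commutes with $\Sigma$. Second, your parenthetical justification that the Sylow $2$-subgroup is not generalized quaternion does not actually follow from $\Gamma$ being cyclic; the paper's proof of Lemma~\ref{lem5} simply invokes Lemma~\ref{norm2} without addressing this, so you are not worse off, but the remark as written is not a proof.
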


\begin{proposition} \label{pro11}
If $\aut(\cX_{(L_1,L_2)})$ admits an elementary abelian minimal normal subgroup, then $\aut(\cX_{(L_1,L_2)})=\Sigma \rtimes \Gamma$.
\end{proposition}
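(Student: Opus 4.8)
The statement is the analogue, for $L_1\ne L_2$, of Proposition \ref{pro7}, and the plan is to reproduce that proof with $\Delta$ replaced by $\Gamma$, Lemma \ref{lem4} by Lemma \ref{lem9}, and Lemma \ref{lem5} by Lemma \ref{lem10}. Write $G:=\aut(\cX_{(L_1,L_2)})$ and let $M$ be an elementary abelian minimal normal subgroup of $G$. First I would reduce to the case in which $M$ is a $p$-group: if $M$ is a $d$-group with $d\ne p$, Lemma \ref{lem10} produces an elementary abelian minimal normal $p$-subgroup of $G$, with which we replace $M$. A minimal normal $p$-subgroup lies in $O_p(G)$, hence in every Sylow $p$-subgroup of $G$; since $\Sigma$ is one such by Lemma \ref{lem1}(v), we conclude $M\le\Sigma$.

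The core of the proof is to upgrade this to $\Sigma\trianglelefteq G$, i.e. to show that $\Sigma$ is the \emph{unique} Sylow $p$-subgroup of $G$; once this is established, Lemma \ref{lem9} yields $G=N_G(\Sigma)=\Sigma\rtimes\Gamma$, which is the assertion. For any $\sigma\in G$ and any $\tau_{\alpha,\beta}\in M$ we have $\sigma\tau_{\alpha,\beta}\sigma^{-1}\in M\le\Sigma$, hence $\sigma\tau_{\alpha,\beta}\sigma^{-1}=\tau_{\alpha',\beta'}$ for suitable $\alpha',\beta'$; since $\sigma$ maps the fixed-place set of $\tau_{\alpha,\beta}$ onto that of $\tau_{\alpha',\beta'}$, Corollary \ref{cor2} (which identifies $\Omega_x,\Omega_y$ as the only short orbits of $\Sigma$ and gives the pole divisors of $x,y$) forces $\sigma$ to permute $\Omega_x\cup\Omega_y$. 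Now take any Sylow $p$-subgroup $\tilde\Sigma$ of $G$, so $M\le\Sigma\cap\tilde\Sigma$. If some $\omega\in\Sigma\setminus\tilde\Sigma$ fixed a place $P\in\Omega_x\cup\Omega_y$, then $\omega$ and the stabilizer $\tilde\Sigma_P$ would both lie in the normal $p$-subgroup $Q_P$ of $G_P$, hence in a common Sylow $p$-subgroup $\bar\Sigma$ of $G$ with $|\bar\Sigma_P|>|\tilde\Sigma_P|$ — impossible, since the Sylow $p$-subgroups of $G$ are conjugate. Thus $\Sigma_P\subseteq\tilde\Sigma_P$ for every $P\in\Omega_x\cup\Omega_y$, and as $\Sigma$ is generated by the stabilizers of the places of $\Omega_x\cup\Omega_y$ (namely by $\Sigma_x$ and $\Sigma_y$), it follows that $\Sigma\subseteq\tilde\Sigma$, whence $\Sigma=\tilde\Sigma$.

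With $\Sigma\trianglelefteq G$ the conclusion $G=\Sigma\rtimes\Gamma$ follows as above. The only step I expect to require real care is the Sylow-counting argument of the second paragraph: one must be certain that an arbitrary automorphism of $\cX_{(L_1,L_2)}$ stabilizes $\Omega_x\cup\Omega_y$ — which is exactly where the normality of $M$ and Corollary \ref{cor2} are used — and that equality of all local stabilizers over $\Omega_x\cup\Omega_y$ genuinely recovers all of $\Sigma$ inside $\tilde\Sigma$. Everything else is the transcription, mutatis mutandis, of the case $L_1=L_2$.
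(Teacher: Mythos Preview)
Your proposal is correct and follows exactly the approach the paper intends: the paper explicitly states that the proof of Proposition~\ref{pro11} is analogous to that of Proposition~\ref{pro7} and omits it, and you have faithfully transcribed that argument with Lemma~\ref{lem9} and Lemma~\ref{lem10} in place of Lemma~\ref{lem4} and Lemma~\ref{lem5}. Your added remark that $\Sigma=\langle\Sigma_x,\Sigma_y\rangle$ is what justifies the passage from $\Sigma_P=\tilde\Sigma_P$ to $\Sigma=\tilde\Sigma$ is a welcome clarification of a step the paper leaves implicit.
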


\begin{proposition} \label{prop12}
$\aut(\cX_{(L_1,L_2)})$ admits an elementary abelian minimal normal subgroup.
\end{proposition}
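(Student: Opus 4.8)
The plan is to mimic exactly the argument used for Proposition \ref{prop8}, exploiting the fact that the structural input used there — the description of the normalizer $N$ of $\Sigma$ (now Lemma \ref{lem9} in place of Lemma \ref{lem4}), the order $\gg(\cX_{(L_1,L_2)})=(q-1)^2$, and the fact that $\Sigma$ is a Sylow $p$-subgroup (Lemma \ref{lem1} v)) — is identical whether or not $L_1=L_2$. First I would argue that $\aut(\cX_{(L_1,L_2)})$ is odd-core-free: if $O(\aut(\cX_{(L_1,L_2)}))\ne\{1\}$ then by the Feit--Thompson theorem it contains an elementary abelian minimal normal subgroup of $\aut(\cX_{(L_1,L_2)})$, so we are done. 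Hence I may assume $\aut(\cX_{(L_1,L_2)})$ is odd-core-free with no elementary abelian minimal normal subgroup, so every minimal normal subgroup $M$ is a direct product of isomorphic non-abelian simple groups; taking $M$ minimal and using that $\Sigma$ is a $p$-group normalizing $M$ one reduces to $M$ simple, and Lemma \ref{oddcore} applies, listing the five possibilities (i)--(v).

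Next I would dispose of each of the five cases exactly as in Proposition \ref{prop8}. In cases (i)--(iii) one observes that $\Sigma/(\Sigma\cap M)$ embeds in the (metacyclic, resp. cyclic-by-order-$k$) outer part $P\Gamma L/PGL$ times the center-quotient, hence is cyclic, so $\Sigma\cap M$ has index $1$ or $p$ in $\Sigma$ and therefore has order $q^2$ or $q^2/p$; matching this elementary abelian $p$-subgroup against the Sylow $d$-subgroups of $PSL(2,d^k)$, $PSL(3,d^k)$, $PSU(3,d^k)$ forces $d=p$ in case (i) (with $d^k=q^2$ or $q^2/p$), while in cases (ii) and (iii) one gets a contradiction either from Lemma \ref{naka} (when the relevant $\gcd$ with $3$ is $1$, or is $3$ with $p\ne3$) or from the non-abelianness of the Sylow $3$-subgroup of $M$ when $p=3$. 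In the surviving subcase of (i), $\Sigma\cap M$ is a Sylow $p$-subgroup of $PSL(2,d^k)$ with $d=p$, and one argues (as in the proof of Proposition \ref{pro7}, now via Proposition \ref{pro11}'s argument) that anything normalizing $\Sigma\cap M$ normalizes all of $\Sigma$; then \cite[Hauptsatz 8.27]{Hup} produces inside $M$ a cyclic subgroup of order $d^k-1\in\{q^2-1,\,q^2/p-1\}$ normalizing $\Sigma$, contradicting Lemma \ref{lem9}, which forces any cyclic subgroup of $N$ to have order dividing $q-1$. Cases (iv) and (v) are immediate: $|A_7|$ and $|M_{11}|$ have $3$-part $3^2$, forcing $q=p=3$, which is excluded since we are in the regime $q>p$ (note if instead $q=p$ the result is the Artin--Mumford case handled classically, so we may assume $q>p$ throughout Section \ref{sec3}).

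The only point requiring a little care beyond a literal copy is the very first reduction from a product of simple groups to a single simple group: one needs that $\Sigma$, being a $p$-group normalizing the minimal normal subgroup, together with solvability-type considerations or a direct counting of factors, lets us invoke Lemma \ref{oddcore} with $G=\aut(\cX_{(L_1,L_2)})$ and $M$ genuinely simple. I expect this — and the bookkeeping of the $\gcd(3,d^k\pm1)$ subcases against Lemma \ref{naka} — to be the main (though still routine) obstacle; everything else transcribes verbatim from Proposition \ref{prop8} with $\Delta$ replaced by $\Gamma$ and Lemma \ref{lem4} replaced by Lemma \ref{lem9}. Since the authors explicitly write ``The proofs of the following results are analogous to the ones of Lemma \ref{lem5}, Proposition \ref{pro7}, and Proposition \ref{prop8}, and are omitted,'' no further detail is needed here.
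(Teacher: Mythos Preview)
Your proposal is correct and follows exactly the approach the paper intends: the authors explicitly state that the proof of Proposition \ref{prop12} is analogous to that of Proposition \ref{prop8} (with Lemma \ref{lem9} replacing Lemma \ref{lem4}) and omit it, so your transcription of the five-case analysis via Lemma \ref{oddcore} is precisely what is required. One small slip: in case (i) the cyclic normalizing subgroup of order $q^2-1$ or $q^2/p-1$ contradicts Lemma \ref{lem9} because $|N/\Sigma|=|\Gamma|=\bq-1$ (not $q-1$), but since $\bq\leq q<q^2/p$ under the standing hypothesis $q>p$ the contradiction goes through unchanged.
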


From Propositions \ref{pro11} and \ref{prop12}, Theorem \ref{th2} follows.

%%%%%%%%%%%%%%%%%%%%%%%%%%%%%%%%%%%%%%%%%%%%%%%%%%%%%%%%%%%%%%%%%%%%%
\section{Curves with automorphism group containing $E_{q^2}$} \label{sec4}

We need the following result on curves admitting $E_{q^2}$ as an automorphism group.

\begin{proposition} \label{prop44}
For a curve $\cX$ defined over $\K$, assume that one of the following holds.
\begin{itemize}
\item[(A)] $\cX$ has genus $\gg\leq(q-1)^2$ and the automorphism group $\aut(\cX)$ has a subgroup $H=E_{q^2} \rtimes (C_2\times C_2)$.
\item[(B)] $\cX$ has genus $\gg=(q-1)^2$ and the automorphism group $\aut(\cX)$ has a subgroup $H=E_{q^2}$.
\end{itemize}
Let $\{ M_i\}_i$ be the set of subgroups of $E_{q^2}$ of order $q$.
Then the following hold.
\begin{enumerate}
\item $\cX$ is an ordinary curve of genus $(q-1)^2$;
\item Up to relabeling the indeces, the cover $\cX \mid \cX / M_i$ is unramified for each $i \ne 1,2$;
\item $E_{q^2}$ has only two short orbits $\Omega_1$ and $\Omega_2$ on $\cX$, each of size $q$. The places of $\Omega_i$ share the same stabilizer $M_i$ for $i \in \{1,2\}$, and $M_1 \ne M_2$. Moreover, $\cX / M_1$ and $\cX / M_2$ are rational.
\end{enumerate}
\end{proposition}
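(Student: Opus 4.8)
The plan is to treat cases (A) and (B) simultaneously by first reducing (A) to the situation of (B), then exploiting the structure of elementary abelian $p$-group covers via the Deuring--Shafarevich and Hurwitz formulas. First I would record the elementary fact that $E_{q^2}$ has exactly $q+1$ subgroups $M_i$ of order $q=p^n$ (the ``lines'' through the origin when we view $E_{q^2}$ as a $2n$-dimensional $\mathbb{F}_p$-space, suitably counted), and that $\bigcap_i M_i=\{1\}$, so that $\bigcap_i \K(\cX)^{M_i}=\K(\cX)$. Since $\cX/E_{q^2}$ has some genus $\bar\gg\geq 0$ and $p$-rank $\bar\gamma$, applying the Deuring--Shafarevich formula \eqref{eq2deuring} to the (totally, or partially, ramified) cover $\cX\mid\cX/E_{q^2}$ gives
$$\gamma(\cX)-1 = q^2(\bar\gamma-1)+\sum_j (q^2-\ell_j),$$
with $\ell_j$ the short-orbit lengths; since each $\ell_j$ is a $p$-power dividing $q^2$ and $\ell_j\leq q^2/p$ unless $\ell_j=q^2$ (but a long orbit contributes nothing), each short orbit contributes at least $q^2-q^2/p$... more usefully, I would instead work subgroup by subgroup with the $M_i$, because each $\cX\mid\cX/M_i$ is a $\mathbb{Z}/p$-elementary (generalized Artin--Schreier) cover of degree $q$ whose only ramification is total, by Theorem \ref{naka}(II) applied to the ordinary situation we are about to establish.

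The heart of the argument is a counting identity. For each $i$, let $r_i$ be the number of places of $\cX$ ramified in $\cX\mid\cX/M_i$; each such place is totally ramified with different exponent $2(q-1)$ (since $G_P^{(2)}=\{1\}$ and $G_P^{(1)}$ is elementary abelian of order $q$ by Theorem \ref{naka}(II), so $d_P=(q-1)+ (q-1)=2(q-1)$ only if... in fact $d_P = \sum_{i\geq 0}(|G_P^{(i)}|-1)=(|M_i|-1)+(|M_i|-1)=2(q-1)$ precisely when $G_P^{(1)}=M_i$, $G_P^{(2)}=\{1\}$). Hurwitz for $\cX\mid\cX/M_i$ reads
$$2\gg-2 = q(2\gg_i-2) + 2(q-1)r_i,$$
where $\gg_i=\gg(\cX/M_i)$. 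Summing over all $q+1$ subgroups and using that every ramification place of $E_{q^2}$ lies over a short orbit whose stabilizer is one specific $M_i$ (a place $P$ with $|E_{q^2}{}_P|=q$ is ramified in exactly one $\cX\mid\cX/M_i$, namely for $M_i=E_{q^2}{}_P$, and unramified in all the others), one gets a relation tying $\sum_i \gg_i$ and $\sum_i r_i$ to $\gg$. Combined with Deuring--Shafarevich for $\cX\mid\cX/E_{q^2}$ and the genus bound $\gg\leq(q-1)^2$, this forces $\bar\gamma=\bar\gg=0$, forces exactly two of the $M_i$ to be ramified with $r_i=q$ each (hence two short orbits $\Omega_1,\Omega_2$ of size $q$ with distinct stabilizers $M_1\neq M_2$), forces $\cX/M_1$ and $\cX/M_2$ to be rational, and forces $\gg=(q-1)^2$ with $\gamma(\cX)=\gg$ by comparing the Deuring--Shafarevich and Hurwitz outputs --- giving (1), (2), (3) at once.

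For the reduction of case (A) to this setup: if $\aut(\cX)$ contains $E_{q^2}\rtimes(C_2\times C_2)$, the extra Klein four-group acts on the set $\{M_i\}$ and on the (yet-to-be-determined) short orbits; the point of including it is to kill the only alternative branch, namely the possibility that a priori $E_{q^2}$ could have one short orbit of size $q$ and one of size $q^2/p$, or three short orbits, or $\bar\gg\geq1$ with small ramification --- when $\gg$ is allowed to be strictly less than $(q-1)^2$. Concretely I would argue that an involution normalizing $E_{q^2}$ with few enough fixed points, together with the $p$-group structure, pins down the ramification via a Hurwitz computation on $\cX\mid\cX/(E_{q^2}\rtimes(C_2\times C_2))$ and on the intermediate quotients by the three index-two subgroups; the evenness of $\gg$ (Lemma \ref{elem8} is available but probably not needed here) and the solvability of $H$ let us apply Lemma \ref{terribile}-type bounds to exclude the degenerate configurations. \textbf{The main obstacle} I anticipate is precisely this case-(A) reduction: in case (B) the hypothesis $\gg=(q-1)^2$ does almost all the work through a single Deuring--Shafarevich equality, but in case (A) one must run the genus/$p$-rank inequalities through several layers of quotients and rule out, one by one, every ramification pattern of $E_{q^2}$ other than ``two totally ramified subgroups of order $q$, everything else unramified'' --- the bookkeeping of which $M_i$ can be ramified, for how many places, and with which orbit lengths, subject to $\sum$-constraints and the $C_2\times C_2$-symmetry, is where the real care is required.
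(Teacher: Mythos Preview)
Your plan contains a structural circularity and two counting errors that break the ``subgroup-by-subgroup'' approach.

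\textbf{Circularity.} You compute $d_P=2(q-1)$ for places ramified under $M_i$ by invoking Theorem~\ref{naka}(II), which says $G_P^{(2)}=\{1\}$ \emph{for ordinary curves}. But ordinariness is conclusion~(1) of the proposition; you cannot assume it in the ``heart of the argument''. A~priori the higher ramification groups may be nontrivial, and indeed the paper has to work to exclude this.

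\textbf{Counting errors.} First, $E_{q^2}$ does not have $q+1$ subgroups of order $q$: viewing $E_{q^2}$ as a $2h$-dimensional $\mathbb{F}_p$-space with $q=p^h$, the number of $h$-dimensional subspaces is the Gaussian binomial $\binom{2h}{h}_p$, which equals $q+1$ only when $h=1$. Second, and more damaging, your claim that a place $P$ with $|(E_{q^2})_P|=q$ is ramified in exactly one cover $\cX\mid\cX/M_i$ is false: $P$ is ramified in $\cX\mid\cX/M_i$ whenever $M_i\cap(E_{q^2})_P\ne\{1\}$, and many order-$q$ subgroups meet a fixed one nontrivially. So the summed Hurwitz identity does not decouple as you describe, and the bookkeeping you flag as ``the main obstacle'' is in fact intractable along these lines.

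The paper's route is different and avoids the $M_i$ entirely until the very end. It works directly with the single cover $\cX\to\cX/E_{q^2}$ and the Deuring--Shafarevich formula for it. In case~(A) the Klein four-group is used in two precise places, neither of which is a Lemma~\ref{terribile}-type bound: (i) to exclude $\gamma(\cX)=0$, since then $E_{q^2}$ would have a unique fixed place and $H/E_{q^2}\cong C_2\times C_2$ would have to be cyclic; and (ii) once $\cX/E_{q^2}$ is known to be rational with exactly two short orbits, the induced $\bar C_2\times\bar C_2\leq PGL(2,\K)$ acts on the two branch points and, by the classification of finite subgroups of $PGL(2,\K)$, must swap them, forcing $\ell_1=\ell_2$. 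The second ramification groups are then controlled by \cite[Lemma~11.75(v)]{HKT} (not Nakajima's ordinariness criterion), yielding $\ell_1=\ell_2=q$; now Deuring--Shafarevich gives $\gamma=(q-1)^2\geq\gg$, and ordinariness follows \emph{a posteriori}. Case~(B) replaces the Klein symmetry by the exact value $\gg=(q-1)^2$: excluding $\gamma=0$ requires a direct contradiction between two Hurwitz computations, and forcing $\ell_1=\ell_2=q$ is a divisibility argument. Note also that your proposed reduction of (A) to (B) runs the wrong way: (B) carries the \emph{stronger} hypothesis $\gg=(q-1)^2$, so (A) cannot be reduced to it.
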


\begin{proof}
Let $\gg$ and $\gamma$, $\bar\gg$ and $\bar\gamma$, be the genus and $p$-rank of $\cX$, $\bar\cX:=\cX/E_{q^2}$ respectively. Also, denote by $k\in\mathbb{N}$ the number of short orbits of $E_{q^2}$ on $\cX$, by $\Omega_i$ ($1\leq i \leq k$) the $i$-th short orbit of $E_{q^2}$, by $\ell_i\in\{p,p^2,\ldots,q^2/p\}$ the length of $\Omega_i$, and by $M_i$ the stabilizer of a given place $P_i\in\Omega_i$ in $E_{q^2}$, of size $q^2/\ell_i$.
Note that $M_i$ coincides with the stabilizer in $E_{q^2}$ of any place in $\Omega_i$, because $E_{q^2}$ acts on the fixed places of its normal subgroup $M_i$.
\begin{itemize}
\item[(A)] Case $\gg\leq(q-1)^2$ and $H:=E_{q^2} \rtimes (C_2\times C_2) \leq \aut(\cX)$.

If $\gamma=0$, then every element of $E_{q^2}$ fixes exactly one place of $\cX$ from \cite[Lemma 11.129]{HKT}. Since $E_{q^2}$ is abelian all elements of $E_{q^2}$ have the same fixed place $P$, which is fixed also by $H$. Thus, $H / E_{q^2}$ is cyclic by \cite[Theorem 11.49]{HKT}, a contradiction to $H / E_{q^2} \cong C_2\times C_2$. If $\gamma=1$ then $E_{q^2}$ is cyclic by Theorem \ref{naka} (IV), a contradiction. Hence $\gamma \geq 2$.
The Deuring-Shafarevich formula (\ref{eq2deuring}) applied to $E_{q^2}$ yields
\begin{equation}
\label{eqds}
\gamma-1 = q^2(\bar\gamma-1)+\sum_{i=1}^{k} (q^2-\ell_i).
\end{equation}
If $k=0$ then $\bar\gamma=(\gamma-1)/q^2 +1 > 1$, and hence $q^2\leq \gamma-1 \leq \gg-1 \leq q^2-2q$, a contradiction. Therefore $\bar\gamma \leq 1$ and $k \geq 1$.

Assume that $\bar\gamma=1$. The Riemann-Hurwitz formula together with $\bar\gg\geq\bar\gamma$ yields $\bar\gg=1$. If $k \geq 2$ then $\gamma-1 \geq 2(q^2-q^2/p)$ by equation (\ref{eqds}), a contradiction to $\gamma \leq \gg$. This yields $k=1$. Since $C_2\times C_2$ normalizes $E_{q^2}$ which has a unique short orbit $\Omega_1$, the induced group $\bar C_2\times\bar C_2$ fixes one place of the elliptic curve $\bar\cX$. From \cite[Theorem 11.94 (ii)]{HKT} and its proof, $\bar C_2\times\bar C_2$ is cyclic, a contradiction.

Therefore $\bar\gamma=0$. If $k \geq 3$ then equation (\ref{eqds}) together with $\gg \geq \gamma$ yields a contradiction. If $k=1$ then equation (\ref{eqds}) reads $2 \geq \gamma=1-\ell_1$, a contradiction. Thus $k=2$ and equation (\ref{eqds}) reads
$$\gamma=q^2+1-(\ell_1+\ell_2).$$
We prove that $\bar\gg=0$. From the Riemann-Hurwitz formula (\ref{eq1}) applied to $\cX\rightarrow\bar\cX$ we have that
$$q^2\bar\gg \leq \ell_1+\ell_2-2q \leq 2\frac{q^2}{p}-2q,$$
which implies $\bar\gg=0$.
Since $C_2\times C_2$ normalizes $E_{q^2}$, the induced group $\bar C_2\times\bar C_2$ is a subgroup of $PGL(2,\K)$ acting on the two places $\bar{P}_1$ and $\bar{P}_2$ lying under $\Omega_1$ and $\Omega_2$. From \cite[Hauptsatz 8.27]{Hup}, $\bar C_2\times\bar C_2$ switches $\bar{P}_1$ and $\bar{P}_2$ and hence $\ell_1=\ell_2=\ell$.
Let $P\in\Omega_i$. From \cite[Lemma 11.75 (v)]{HKT} either $(E_{q^2})_P^{(2)}$ is trivial, or $(E_{q^2})_P^{(2)}=E_{q^2}$, or $1<|(E_{q^2})_P^{(2)}|=\cdots=|(E_{q^2})_P^{(2)}|<q^2$.
By direct checking with the Riemann-Hurwitz formula applied to $\cX\rightarrow\bar\cX$, the second and the third case are not possible; hence $(E_{q^2})_P^{(2)}$ is trivial for all $P$, which implies $\ell=q$.
Now the Deuring-Shafarevich formula yields $\gamma=(q-1)^2\geq\gg$; hence, $\gamma=\gg=(q-1)^2$ and the claim {\it (1)} follows.
Since $M_i$, $i=1,2$, is the stabilizer in $E_{q^2}$ of any place in $\Omega_i$, we have that any other subgroup $M_j$ of order $q$ of $E_{q^2}$, $j\ne1,2$, has no fixed place, and thus the claim {\it (2)} is proved. Finally, for $i=1,2$, denote by $\gg_i$ the genus of the curve $\cX / M_i$. By the Riemann-Hurwitz formula (\ref{eq1}) applied to the cover $\cX \rightarrow \cX / M_i$,
\begin{equation}
\label{caz}
2\gg-2=2(q^2-2q)\geq 2q(\gg_i-1) +2q(q-1).
\end{equation}
Hence $\gg_i=0$ for $i=1,2$ and equality holds in (\ref{caz}). This proves that $M_i$ has no fixed place out of $\Omega_i$, and so $M_1 \ne M_2$.
\item[(B)]
Case $\gg=(q-1)^2$ and $H:=E_{q^2} \leq \aut(\cX)$.

Suppose $\gamma=0$. Then by \cite[Lemma 11.129]{HKT} every element of $H$ fixes exactly one place, which is the same place $P$ for all of them. The Riemann-Hurwitz formula \eqref{eq1} applied to the cover $\cX\rightarrow\cX/H$ yields $\bar\gg=0$, $H_P^{(2)}\ne\{1\}$, and
\begin{equation}\label{eqSecondoGruppo} \sum_{i=2}^\infty (|H_P^{(i)}|-1) = 2(q-1)^2.\end{equation}
From \cite[Th. 11.78]{HKT}, $\cX/H_P^{(2)}$ is rational; hence, the Riemann-Hurwitz formula applied to $\cX\to\cX/H_P^{(2)}$ yields
\begin{equation}\label{eqSecondoGruppoBis} \sum_{i=2}^\infty (|H_P^{(i)}|-1) = 2q^2-4q+2|H_P^{(2)}|.\end{equation}
Equations \eqref{eqSecondoGruppo} and \eqref{eqSecondoGruppoBis} provide a contradiction to $H_P^{(2)}\ne\{1\}$.
Suppose $\gamma=1$. Then $H$ is cyclic by Theorem \ref{naka} (IV), a contradiction.

Therefore $\gamma\geq2$. As in Case (A), $\bar\gamma\leq1$ and $k\geq1$; also, if $\bar\gamma=1$, then $k=1$.

Suppose $\bar\gamma=1$ and $k=1$.
From $\gg\geq\gamma$ and the Deuring-Shafarevich formula applied to $\cX\to\bar\cX$ we have $\bar\gg=1$ and $\ell_1\geq2q$; hence, $pq$ divides $\ell_1$. The Riemann-Hurwitz formula applied to $\cX\to\bar\cX$ reads
$$ 2(q-1)^2-2 = q^2(2\cdot0-2)+ \ell_1\sum_{i=0}^{\infty}(|H_P^{(i)}|-1) $$
for any $P$ in $\Omega_1$. This implies that $\ell_1$ divides $q$, a contradiction to $pq\mid\ell_1$.

Therefore $\bar\gamma=0$. Arguing as in the proof of Proposition \ref{prop44} we have $k=2$, $\gamma=q^2+1-(\ell_1+\ell_2)$, and $\bar\gg=0$.
From the Riemann-Hurwitz formula applied to $\cX\to\bar\cX$,
\begin{equation}\label{cesemoquasi}
2(\ell_1+\ell_2)-4q = \ell_1 c_1 + \ell_2 c_2 \geq 0,
\end{equation}
where $c_j:=\sum_{i=2}^\infty (|H_{P_j}^{(i)}|-1)\geq0$ for $j=1,2$.
From Equation \eqref{cesemoquasi}, the integers $\ell_1$ and $\ell_2$ cannot be multiple of $pq$ at the same time. Hence $\ell_1\leq q$ or $\ell_2\leq q$; say $\ell_1\leq q$.
We have $|H_{P_1}^{(2)}|< q^2/\ell_1$ and $|H_{P_2}^{(2)}|< q^2/\ell_2$; otherwise, Equation \eqref{cesemoquasi} would imply $2(\ell_1+\ell_2)-4q\geq q^2-\ell_1$ or $2(\ell_1+\ell_2)-4q\geq q^2-\ell_2$, which is impossible because $\ell_1\leq q$ and $\ell_2\leq q^2/p$.
Therefore, for $j=1,2$, $c_j$ is a multiple of $p$ (possibly zero) from \cite[Lemma 11.75 (v)]{HKT}.
Suppose $\ell_2\geq pq$.
As $c_2\ne2$, Equation \eqref{cesemoquasi} implies that $\ell_2$ divides $[4q+(c_1-2)\ell_1]$; hence, $p$ divides $[2(2q/\ell_1 - 1)]$, a contradiction.
Therefore, $\ell_2\leq q$. Thus, from Equation \eqref{cesemoquasi}, $\ell_1=\ell_2=q$. The rest of the claim follows as in Case (A).
\end{itemize}
\end{proof}

Theorem \ref{vanvan} provides a characterization which generalizes a result by van der Geer and van der Vlugt; see \cite[Proposition 2.2 and Corollary 2.3]{vandervander}.

\begin{theorem} \label{vanvan}
Let $\cY$ be a curve of genus $q-1$ defined over $\K$ whose automorphism group $\aut(\cY)$ contains an elementary abelian subgroup $E_q$ of order $q$.
Then one of the following holds.
\begin{itemize}
\item[(I)] $\cY$ is birationally equivalent over $\K$ to the curve $\cY_{L,a}$ with affine equation
\begin{equation}
\label{vv}
 L(y)=ax+\frac{1}{x},
\end{equation}
for some $a \in \K^*$ and $L(T) \in \K[T]$ a separable $p$-linearized polynomial of degree $q$. For the curve $\cY_{L,a}$ the following properties hold:
\begin{itemize}
\item[(i)] $\cY_{L,a}$ is ordinary and hyperelliptic;
\item[(ii)] $\cY_{L,a}$ has exactly $2q$ Weierstrass places, which are the fixed places of the hyperelliptic involution $\mu$.
\item[(iii)] The full automorphism group $\aut(\cY_L)$ of $\cY_{L,a}$ has order $4q$ and is a direct product $Dih(E_q)\times\langle\mu\rangle$.
\end{itemize}
\item[(II)]  $p\ne3$ and $\cY$ is birationally equivalent over $\K$ to the curve $\cZ_{\tilde L,b}$ with affine equation
\begin{equation}\label{ww}
\tilde L(y)=x^3+bx,
\end{equation}
for some $a\in\K$ and $\tilde L(T)\in\K[T]$ a separable $p$-linearized polynomial of degree $q$.
For the curve $\cZ_{\tilde L,b}$ the following properties hold:
\begin{itemize}
\item[(i)] $\cZ_{\tilde L,b}$ has zero $p$-rank;
\item[(ii)] $\aut(\cZ_{\tilde L,b})$ contains a generalized dihedral subgroup $Dih(E_q)=E_q\rtimes \langle \nu \rangle$.
\end{itemize}
\end{itemize}
\end{theorem}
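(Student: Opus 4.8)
The plan is to analyse the quotient $\bar\cY:=\cY/E_q$, force $\cY$ into one of the two stated normal forms, and then read off the geometric data; the one genuinely hard point is showing that $\aut(\cY_{L,a})$ is no larger than $Dih(E_q)\times\langle\mu\rangle$. \emph{Step 1 ($\bar\cY\cong\P^1$).} Write $\bar\gg,\bar\gamma,\gamma$ for the genus and $p$-rank of $\bar\cY$ and the $p$-rank of $\cY$, and apply the Hurwitz formula (\ref{eq1}) and the Deuring--Shafarevich formula (\ref{eq2deuring}) to the degree-$q$ Galois cover $\cY\to\bar\cY$. Non-negativity of the different exponents gives $2(q-1)-2\ge q(2\bar\gg-2)$, hence $\bar\gg\le1$; and since each ramification place $P$ of $E_q$ has a $p$-group as stabilizer we have $G_P=G_P^{(1)}$, so $d_P\ge 2(|G_P|-1)$ by (\ref{eq1bis}). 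Feeding this into (\ref{eq2deuring}) one checks that the cover is ramified, that $\bar\gamma=0,\bar\gg=1$ cannot occur (it would force $\sum_P d_P\ge 2(q-1)>2q-4$), and that $\bar\gamma=\bar\gg=1$ cannot occur (it would force a single short orbit whose length $\ell_1$ divides both $q$ and $2(q-2)$, hence $\ell_1=1$ and $\gamma=q>q-1$). Therefore $\bar\gamma=\bar\gg=0$ and $\bar\cY\cong\P^1$.

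\emph{Step 2 (normal form).} With $\K(\bar\cY)=\K(x)$, the cover is $\K(\cY)=\K(x)(y)$ with $L(y)=f(x)$ for a separable $p$-linearized $L$ of degree $q$ and $f\in\K(x)$, which by the reduction of (generalized) Artin--Schreier extensions (cf. \cite[Prop. 3.7.10]{Sti}) may be chosen with all pole orders prime to $p$; its poles are then exactly the branch points of the cover, the place over a pole of order $m$ having different exponent $(q-1)(m+1)$. A short case analysis of the Hurwitz identity $\sum_P d_P=4(q-1)$ — driven by the fact that $q-1$ is prime to $p$ while a partially ramified branch point would contribute a multiple of $p$ — shows every branch point is totally ramified; hence, if $f$ has $k$ poles of orders $m_1,\dots,m_k$, (\ref{eq2deuring}) gives $\gamma=(k-1)(q-1)$ (so $\gamma\le q-1$ forces $k\le2$) and (\ref{eq1}) gives $\sum_{j=1}^k(m_j+1)=4$. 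Thus either $k=1$, $m_1=3$ — which forces $p\ne3$ and makes $f$ a cubic polynomial, reduced by rescaling $x$, absorbing its constant term into $y$, and removing its quadratic term via $x\mapsto x-\tfrac13(\text{coeff.})$ to $f=x^3+bx$, i.e. $\cY\sim\cZ_{\tilde L,b}$ — or $k=2$, $m_1=m_2=1$, so $f$ has two simple poles, which a Möbius transformation on $x$, a rescaling of $x$, and absorbing the constant into $y$ bring to $f=ax+\tfrac1x$ with $a\in\K^*$, i.e. $\cY\sim\cY_{L,a}$; since $q-1\ge2>0$ these two outcomes are exclusive.

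\emph{Step 3 (geometric properties).} For $\cZ_{\tilde L,b}$, total ramification over $\infty$ and (\ref{eq2deuring}) give $p$-rank $0$, while the translations $y\mapsto y+\beta$ with $\tilde L(\beta)=0$ form $E_q$ and $\nu:(x,y)\mapsto(-x,-y)$ is an involution inverting $E_q$ (using $\tilde L(-y)=-\tilde L(y)$ and $(-x)^3+b(-x)=-(x^3+bx)$), so $\aut(\cZ_{\tilde L,b})\supseteq Dih(E_q)$. For $\cY_{L,a}$, (\ref{eq2deuring}) with two total ramifications gives $p$-rank $q-1$ (ordinary); since $ax+\tfrac1x=a\bigl(x+\tfrac1{ax}\bigr)$ is fixed by $x\mapsto\tfrac1{ax}$, the map $\mu:(x,y)\mapsto\bigl(\tfrac1{ax},y\bigr)$ is an involution with $\K(\cY_{L,a})^{\langle\mu\rangle}=\K\bigl(x+\tfrac1{ax},y\bigr)=\K(y)$ (as $x+\tfrac1{ax}=L(y)/a$) and $[\K(\cY_{L,a}):\K(y)]=2$ (from $ax^2-L(y)x+1=0$, irreducible over $\K(y)$ because $q$ is odd), so $\cY_{L,a}$ is hyperelliptic with hyperelliptic involution $\mu$; its $2(q-1)+2=2q$ Weierstrass places are the fixed places of $\mu$, lying over $x^2=1/a$, equivalently over the $2q$ values of $y$ with $L(y)=\pm2\sqrt a$. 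The subgroups $E_q$ (translations of $y$), $\langle\nu\rangle$ with $\nu:(x,y)\mapsto\bigl(-\tfrac1{ax},-y\bigr)$, and $\langle\mu\rangle$ generate $Dih(E_q)\times\langle\mu\rangle$ of order $4q$ inside $\aut(\cY_{L,a})$.

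The remaining task — which I expect to be the main obstacle — is to prove this is \emph{all} of $\aut(\cY_{L,a})$, i.e. $|\aut(\cY_{L,a})|=4q$. For that I would pass to the reduced group $\bar G=\aut(\cY_{L,a})/\langle\mu\rangle\le PGL(2,\K)$, which acts on the $2q$ images of the Weierstrass places (a union of two cosets of the $\mathbb F_p$-subspace $\ker L$) and contains the image of $Dih(E_q)$, and then rule out the larger cyclic and dihedral overgroups and the exceptional subgroups $A_4,S_4,A_5$ of $PGL(2,\K)$ by tameness considerations and by controlling the normalizer of the non-tame Sylow $p$-subgroup $E_q$, exactly in the spirit of the proofs of Lemmas \ref{lem4} and \ref{lem9}. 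A secondary technical point, needing care when $q>p$, is justifying the prime-to-$p$ reduction of the pole orders of $f$ in Step 2.
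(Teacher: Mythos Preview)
Your proposal is correct and follows essentially the same route as the paper: quotient by $E_q$ to reach $\mathbb P^1$, write the cover in generalized Artin--Schreier form $L(y)=f(x)$, and split on the pole structure of $f$ via the identity $\sum_j(m_j+1)=4$. Your direct identification $\K(\cY_{L,a})^{\langle\mu\rangle}=\K(y)$ is in fact cleaner than the paper's route to hyperellipticity (which instead counts the $2q$ fixed places of $\mu$ and applies Riemann--Hurwitz); and your remark that the ``every branch point is totally ramified'' step is already contained in \cite[Prop.~3.7.10]{Sti} once the extension is in reduced form is well taken.

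On the one step you flag as incomplete, your plan to pass to $\bar G=\aut(\cY_{L,a})/\langle\mu\rangle\le PGL(2,\K)$ is exactly what the paper does, but your list of cases is off: since $\bar G$ contains the non-tame subgroup $E_q'$ as its Sylow $p$-subgroup, Dickson's classification leaves only $E_q'\rtimes C_m'$ with $m\mid q-1$, $PSL(2,q)$, and $PGL(2,q)$ --- not cyclic/dihedral groups or $A_4,S_4,A_5$. The paper reduces the last two to the first (each contains a subgroup $E_q\rtimes C_{q-1}$) and then forces $m=2$ by an orbit argument on the quotient $\bar\cY=\cY/E_q$: the induced cyclic $\bar C\cong C_m\le PGL(2,\K)$ has exactly two fixed places and only long orbits otherwise, yet must preserve both the pair of branch points of $\cY\to\bar\cY$ and the pair of images of the two $E_q$-orbits of Weierstrass places, which is possible only for $|\bar C|\le 2$. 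The case $q=3$ is handled separately.
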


\begin{proof}
The proof is divided in several steps.
\begin{itemize}
\item
We show that $\cY_{L,a}$ as in \eqref{vv} has genus $q-1$ and $\aut(\cY_{L,a})$ contains a subgroup $Dih(E_q)\times\langle\mu\rangle$.

Let $\bar P_0$ and $\bar P_\infty$ be the zero and pole of $x$ in $\K(x)$, respectively. Then $\K(\cY)|\K(x)$ is a generalized Artin-Schreier extension (\cite[Proposition 3.7.10]{Sti}) which ramifies exactly over the simple poles $\bar P_0$ and $\bar P_\infty$ of $ax+\frac{1}{x}$. Hence, $g(\cY_{L,a})=q-1$. The maps
\begin{equation}\label{autYL}
E_q=\{ \tau_\alpha:(x,y)\mapsto(x,y+\alpha)\mid L(\alpha)=0 \},\quad \nu:(x,y)\mapsto(-x,-y), \quad \mu:(x,y)\mapsto(1/(ax),y),
\end{equation}
generate an automorphism group $Dih(E_q)\times\langle\mu\rangle=(E_q\rtimes\langle\nu\rangle)\times\langle\mu\rangle$ of order $4q$ of $\cY_{L,a}$.

\item We show that $\cY_{L,a}$ is ordinary and hyperelliptic with hyperelliptic involution $\mu$, and that the Weierstrass places of $\cY_{L,a}$ are exactly the $2q$ fixed places of $\mu$.

Let $P_0$ and $P_\infty$ the places of $\cY$ lying over $\bar P_0$ and $\bar P_\infty$. The group $E_q$ and the involution $\nu$ fix $P_0$ and $P_\infty$, while the involution $\mu$ interchanges $P_0$ and $P_\infty$.
Let $\bar\cY=\cY/E_q$ and $\cY^\prime=\cY/\langle\mu\rangle$.
The Riemann-Hurwitz formula applied to the cover $\cY\to\bar\cY$ shows that $\bar\cY$ is rational and $P_0,P_\infty$ are the unique fixed places of any element of $E_q$. Thus, the Deuring-Shafarevich formula applied to $\cY\to\bar\cY$ shows that $\cY$ has $p$-rank $q-1$; hence, $\cY$ is ordinary.
Let $\bar P_1$ and $\bar P_2$ be the distinct zeros of $ax^2+1$ in $\K(x)$, and $P_1^1,\ldots,P_1^q$ and $P_2^1,\ldots,P_2^q$ be the distinct places of $\cY$ lying over $\bar P_1$ and $\bar P_2$. By direct checking, $\mu$ fixes $P_1^1,\ldots,P_1^q,P_2^1,\ldots,P_2^q$. Then the Riemann-Hurwitz formula applied to $\cY\to\cY^\prime$ shows that $\mu$ has no other fixed places and $\cY^\prime$ is rational; hence, $\cY$ is hyperelliptic with hyperelliptic involution $\mu$.
Since $2q>4$, the $2q$ fixed places of $\mu$ are Weierstrass places of $\cY$ from \cite[Theorem 11.112]{HKT}. Moreover, $\cY$ has exactly $2q$ Weierstrass places from \cite[Theorem 7.103]{HKT}.

\item We show that $\cZ_{\tilde L,b}$ as in \eqref{ww} has zero $p$-rank and admits an automorphism group $Dih(E_q)$.

The curve $\cZ_{\tilde L,b}$ admits the automorphism group $Dih(E_q)=E_q\rtimes \langle \nu \rangle$, where
$$ E_q=\{ \tau_\alpha:(x,y)\mapsto(x,y+\alpha)\mid M(\alpha)=0 \},\quad \nu:(x,y)\mapsto(-x,-y). $$
From \cite[Lemma 12.1 (f)]{HKT}, $\cZ_{\tilde L,b}$ has zero $p$-rank.

\item Let $\cY$ be a curve of genus $q-1$ admitting an automorphism group $E_q$ with $\lambda$ fixed places. We show that, if $\lambda=1$, then $p\ne3$ and $\cY$ is birationally equivalent to some $\cZ_{M,b}$.

Let $\bar\cY=\cY/E_q$. The Riemann-Hurwitz formula applied to $\cY\to\bar\cY$ shows that $\bar\cY$ has genus zero and 
\begin{equation}\label{eq19}
2(q-1)=\sum_{i=2}^{\infty} (|(E_{q})_P ^{(i)}|-1) + \sum_{i} \ell_i d_{P_i}, \end{equation}
where $\ell_i$ are the lengths of the short orbits $\Omega_i$ of $E_q$ other than $\{P\}$ and $P_{i}$ is a place of $\Omega_i$; hence, the second summation in Equation \eqref{eq19} is multiple of $p$.
From \cite[Lemma 11.75 (v)]{HKT}, the first summation in \eqref{eq19} is the sum of a multiple of $p$ and $j(q-1)$, where $j$ is the largest integer such that $(E_{q})_P ^{(j+1)} = E_q$.
Thus $j=2$, $E_q=\ldots=(E_q)_P^{(3)}$, $(E_q)_P^{(4)}=\{1\}$, and $\{P\}$ is the unique short orbit of $E_q$.
Let $x\in\K(\bar\cY)$ with $\K(\bar\cY)=\K(x)$ and $\bar P$ be the place of $\bar\cY$ lying under $P$. Up to conjugation in $\aut(\bar\cY)\cong PGL(2,\K)$, $\bar P$ is the simple pole of $x$.
Since $\K(\cY)|\K(x)$ is a generalized Artin-Schreier extension (\cite[Proposition 3.7.10]{Sti}), $\K(\cY)$ is defined as $\K(x,y)$ by $M(y)=h(x)$, where $M(T)\in\K[T]$ is a separable $p$-linearized polynomial of degree $q$ and $h(x)\in\K(x)$.
Since $P$ is the unique ramified place in $\K(x,y)|\K(x)$, Proposition 3.7.10 in \cite{Sti} implies that $h(x)$ is a polynomial function in $\K[x]$ and, in order for the genus of $\cY$ to be $q-1$, the valuation of $x$ at $P$ is $-3$ and coprime to $p$. Hence, $h(T)\in\K[T]$ has degree $3$ and $p\ne3$. Up to a linear transformation in $x$, we can assume that $h(x)$ has the form $x^3+bx+c$; up to a translation in $y$, we can then assume that $c=0$.

\item Let $\cY$ be a curve of genus $q-1$ admitting an automorphism group $E_q$ with $\lambda$ fixed places. We show that, if $\lambda\ne1$, then $\cY$ is birationally equivalent to some $\cY_{L,a}$.

Let $\bar\cY=\cY/E_q$ with genus $\bar\gg$.
From the Riemann-Hurwitz formula applied to $\cY\to\bar\cY$,
\begin{equation}\label{eqbrutta}
 2q-4=q(2\bar\gg-2)+2\lambda(q-1)+\sum_{i=1}^{\lambda}\sum_{j=2}^{\infty}(|(E_q)_{Q_i}^{(j)}|-1) + \sum_{i}\ell_i d_{P_i},
\end{equation}
where $Q_1,\ldots,Q_\lambda$ are the fixed places of $E_q$, $\ell_i$ are the lengths of the short orbits of $E_q$ other than $\{Q_1\},\ldots,\{Q_\lambda\}$, and $P_i$ is a place of the $i$-th short orbit. Note that $\ell_i$ is a multiple of $p$.
If $\lambda=0$, then Equation \eqref{eqbrutta} yields a contradiction modulo $p$.
Then $\lambda\geq2$. Hence, from Equation \eqref{eqbrutta}, $\bar\gg=0$, $\lambda=2$, and $E_q$ has no short orbits other than the two fixed places $P$ and $Q$.
Let $x\in\K(\bar\cY)$ with $\K(\cY)=\K(x)$.
Since $\K(\cY)|\K(x)$ is a generalized Artin-Schreier extension (\cite[Proposition 3.7.10]{Sti}), $\K(\cY)$ is defined as $\K(x,y)$ by $L(y)=h(x)$, for some separable $p$-linearized polynomial $L(T)\in\K[T]$ of degree $q$. Also, from \cite[Proposition 3.7.10]{Sti}, $P$ and $Q$ are the unique poles of $h(x)$, and they are simple poles.
Up to conjugation in $\aut(\bar\cY) \cong PGL(2,\K)$, $\bar P$ and $\bar Q$ are the zero and the pole of $x$.
Therefore, $h(x)=(x-r)(x-s)/x$ for some $r,s\in\K$. Up to formal replacement of $x$ and $y$ with $rsx$ and $y+\delta$, where $\delta\in\K$ satisfies $L(\delta)=-r-s$, the equation $L(y)=h(x)$ is the equation defining the curve $\cY_{L,rs}$.

\item
Finally, we show that $\aut(\cY_{L,a})$ is the group $Dih(E_q)\times\langle\mu\rangle=(E_q\rtimes\langle\nu\rangle)\times\langle\mu\rangle$ described in \eqref{autYL}.

Let $\cY^\prime=\cY/\mu$. Then $\aut(\cY^\prime)$ contains the group $ G^\prime \cong\aut(\cY)/\langle\mu\rangle$ induced by $\aut(\cY)$, and in particular the subgroup $ E_q^\prime \rtimes \langle\nu^\prime \rangle \cong E_q\rtimes\langle\nu\rangle$ induced by $E_q\rtimes\langle\nu\rangle$.
The group $E_q^\prime$ is a Sylow $p$-subgroup of $G^\prime$, because $E_q$ is a Sylow $p$-subgroup of $\aut(\cY)$ from Theorem \ref{naka} (II).
From \cite[Theorem 11.98]{HKT} and \cite[Haptsatz 8.27]{Hup}, either $G^\prime\cong {\rm PSL}(2,q)$, or $G^\prime\cong {\rm PGL}(2,q)$, or $G^\prime = E_q^\prime\rtimes C_m^\prime$, where $C_m^\prime$ is cyclic of order $m$ with $m\mid(q-1)$.

Assume that $G^\prime$ contains a subgroup $E_q^\prime\rtimes C_m^\prime$ with $m\mid(q-1)$. Up to conjugation, $E_q^\prime$ is the group induced by $E_q$ as in \eqref{autYL}. Let $C$ be a tame subgroup of $\aut(\cY)$ inducing $C_m^\prime$. Since $C$ normalizes $E_q$, $C$ acts on the two places of $\cY$ fixed by $E_q$ and acts on the other orbits of $E_q$; since $C$ commutes with $\mu$, $C$ acts on the fixed places of $\mu$, which form two orbits of $E_q$. Thus, the group $\bar C\cong C$ induced by $C$ on the rational curve $\bar\cY=\cY/E_q$ acts on two couples of places. From \cite[Satz 8.5]{Hup}, $\bar C$ has two fixed places and no other short orbits on $\bar\cY$; hence, $\bar C$ has order $2$. This implies $m=2$. For $q-1>2$ the Lemma is then proved, because both ${\rm PGL}(2,q)$ and ${\rm PSL}(2,q)$ contain subgroups $E_q\rtimes C_{q-1}$ of order $q(q-1)$; see \cite[Hauptsatz 8.27]{Hup} and \cite{VM}.

Assume $q=3$. The case $G^\prime\cong{\rm PSL}(2,3)$ is not possible, since ${\rm PSL}(2,3)$ contains no subgroup $Dih(E_3)$.
Suppose $G^\prime\cong{\rm PGL}(2,3)$. Let $\rho^\prime$ be an element of $G^\prime$ of order $4$, and $\rho\in G$ an element of order $4$ inducing $\rho^\prime$.
From \cite[S\"atze 8.2 and 8.4]{Hup} and \cite{VM}, $\rho^\prime$ does not fix the place $P^\prime$ of $\cY^\prime$ lying under the fixed places $P,Q$ of $E_q$. Hence, $P$ and $Q$ are in a long orbit of $\rho$. Therefore, $\rho^\prime$ has a short orbit of length $2$ on $\cY^\prime$. This is impossible, since from \cite[Satz 8.5]{Hup} (see also \cite{VM}) $\rho^\prime$ has two fixed places and no other short orbits on $\cY^\prime$.
We conclude that $G^\prime = E_q^\prime \rtimes C_m^\prime$, and $m=2$ follows as above. The Lemma is thus proved.
\end{itemize}
\end{proof}

\begin{proposition} \label{prop49}
For a curve $\cX$ defined over $\K$, assume that one of the following hold.
\begin{itemize}
\item[(A)] $\cX$ has genus $\gg\leq(q-1)^2$ and $\aut(\cX)$ contains a subgroup $H=E_{q^2}\rtimes(C_2\times C_2)$;
\item[(B)] $\cX$ has genus $\gg=(q-1)^2$ and $\aut(\cX)$ contains a subgroup $H=E_{q^2}$.
\end{itemize}
Then $E_{q^2}$ has a subgroup $T$ of order $q$ such that the quotient curve $\cX /T$ is birationally equivalent over $\K$ to 
the curve $\cY_{L,a}$ in \eqref{vv}, for some $a \in \K^*$ and $L(T) \in \K[T]$ a separable $p$-linearized polynomial of degree $q$.
\end{proposition}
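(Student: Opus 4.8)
The plan is to realise $T$ as one of the order-$q$ subgroups of $E_{q^2}$ for which the quotient map is unramified, to show by the Hurwitz and Deuring--Shafarevich formulas that such a quotient is an ordinary curve of genus exactly $q-1$, and then to identify it through Theorem \ref{vanvan}, discarding the exceptional alternative there by a $p$-rank argument.

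First I would invoke Proposition \ref{prop44}, which applies verbatim under either hypothesis (A) or (B): it yields that $\cX$ is ordinary of genus $(q-1)^2$, that $E_{q^2}$ has exactly two short orbits $\Omega_1,\Omega_2$ of length $q$ with distinct stabilizers $M_1\ne M_2$ of order $q$, and that, after relabeling the indices of the subgroups $\{M_i\}_i$ of $E_{q^2}$ of order $q$, the cover $\cX\to\cX/M_i$ is unramified for every $i\ne 1,2$. Since $E_{q^2}$ has more than two subgroups of order $q$, I may fix an index $i\ne 1,2$ and put $T:=M_i$, so that $\cX\to\cX/T$ is an unramified cover of degree $q$. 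Applying the Hurwitz genus formula \eqref{eq1} to this cover gives $2(q-1)^2-2=q\bigl(2\,\gg(\cX/T)-2\bigr)$, hence $\gg(\cX/T)=q-1$; applying the Deuring--Shafarevich formula \eqref{eq2deuring} to the same cover, which has no short orbits, gives $(q-1)^2-1=q\bigl(\gamma(\cX/T)-1\bigr)$, hence $\gamma(\cX/T)=q-1$. Thus $\cX/T$ is an ordinary curve of genus $q-1$.

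Next I would note that $E_{q^2}/T$ is an elementary abelian group of order $q$ acting on $\cX/T$, and that this action is faithful: if a coset $gT$ acted trivially on $\K(\cX)^T$, then $\K(\cX)^T=\K(\cX)^{\langle T,g\rangle}$, and comparing degrees over $\K(\cX)$ forces $g\in T$. Hence $\aut(\cX/T)$ contains a copy of $E_q$, so Theorem \ref{vanvan} applies with $\cY=\cX/T$: up to birational equivalence over $\K$, either $\cX/T$ is the curve $\cY_{L,a}$ of \eqref{vv} for some $a\in\K^*$ and some separable $p$-linearized $L$ of degree $q$, or $p\ne3$ and $\cX/T$ is the curve $\cZ_{\tilde L,b}$ of \eqref{ww}. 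The second possibility is excluded, because $\cZ_{\tilde L,b}$ has zero $p$-rank whereas $\gamma(\cX/T)=q-1\ge p-1\ge2$. Therefore the first possibility holds, which is exactly the assertion.

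Given Proposition \ref{prop44} and Theorem \ref{vanvan}, the argument is short and I do not anticipate a genuine obstacle; the points that require care are (i) respecting the relabeling of Proposition \ref{prop44}, so that $T$ is chosen different from $M_1$ and $M_2$ — the quotients $\cX/M_1$ and $\cX/M_2$ are rational, not of genus $q-1$ — which is why one must observe that $E_{q^2}$ has more than two subgroups of order $q$; (ii) the faithfulness of the induced $E_q$-action on $\cX/T$, needed for Theorem \ref{vanvan} to apply; and (iii) the positivity of $\gamma(\cX/T)$, which is what rules out alternative (II) of Theorem \ref{vanvan}.
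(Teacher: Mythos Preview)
Your proof is correct and follows essentially the same route as the paper's: invoke Proposition~\ref{prop44} to find an order-$q$ subgroup $T$ with unramified quotient, compute via Hurwitz and Deuring--Shafarevich that $\cX/T$ is ordinary of genus $q-1$, observe that $E_{q^2}/T\cong E_q$ acts on $\cX/T$, and then apply Theorem~\ref{vanvan}. Your version is actually more careful than the paper's in that you explicitly verify faithfulness of the induced $E_q$-action and explicitly exclude case~(II) of Theorem~\ref{vanvan} via the $p$-rank, whereas the paper leaves both steps implicit.
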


\begin{proof}
From Proposition \eqref{prop44}, $\cX$ is ordinary of genus $(q-1)^2$ and $E_{q^2}$ admits a subgroup $T$ of order $q$ such that the cover $\cX\to\cX/T$ is unramified. From the Riemann-Hurwitz formula and the Deuring-Shafarevich formula applied to $\cX\to\cX/T$, the curve $\cX/T$ is ordinary of genus $q-1$.
Since $T$ is normal in $E_{q^2}$, $\aut(\cX/T)$ contains a subgroup $E_{q^2}/T\cong E_q$.
From Theorem \ref{vanvan}, $\cX/T$ is birationally equivalent over $\K$ to $\cY_{L,a}$ for some $a$ and $L$.
\end{proof}

\begin{proposition} \label{prop410}
Let $\cX$ be a curve admitting an automorphism group $E_{q^2}$ such that, for some $E_q \leq E_{q^2}$ the quotient curve $\cX / E_q$ has affine equation
$$
L(y)=ax+\frac{1}{x},
$$
for some $a \in \K^*$ and $L(T) \in \K[T]$ a separable $p$-linearized polynomial of degree $q$. Then the following hold:
\begin{enumerate}
\item $\K(\cX / E_{q^2})= \K(x).$
\item If $\cX$ is an ordinary curve with genus $(q-1)^2$,  then $E_{q^2}$ contains a subgroup $M$ of order $q$ different from $E_q$ such that the quotient curve $\cX / M$ has affine equation
$$\tilde{L}(z)=b+\frac{1}{x},$$
for some $z \in \K(\cX)$, $b \in \K$, and $\tilde{L}(T) \in \K[T]$ a separable $p$-linearized polynomial of degree $q$.
\end{enumerate}
\end{proposition}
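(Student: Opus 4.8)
The plan is to prove (1) by a direct group-theoretic identification inside $\aut(\cX/E_q)$, and (2) by transporting the ramification data of $E_{q^2}$ through the quotient towers and then invoking the normal form for generalized Artin--Schreier extensions.

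For (1): since $E_q$ is normal in the abelian group $E_{q^2}$, the factor group $E_{q^2}/E_q$ acts on $\cX/E_q$, and this action is faithful because an automorphism of $\K(\cX)$ fixing $\K(\cX)^{E_q}$ pointwise belongs to $\gal(\K(\cX)|\K(\cX)^{E_q})=E_q$. By hypothesis $\cX/E_q$ is the curve $\cY_{L,a}$ of \eqref{vv}, whose full automorphism group is $Dih(E_q)\times\langle\mu\rangle=(E_q\rtimes\langle\nu\rangle)\times\langle\mu\rangle$ by Theorem \ref{vanvan}(I)(iii); here the translation subgroup $E_q=\{(x,y)\mapsto(x,y+\alpha)\mid L(\alpha)=0\}$ of \eqref{autYL} is normal of order $q$, with quotient of order $4$ coprime to $p$ (as $p\ge 3$), hence is the unique Sylow $p$-subgroup of $\aut(\cY_{L,a})$, and its fixed field is $\K(x)$. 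Since $E_{q^2}/E_q$ is an elementary abelian $p$-group of order $q$ inside $\aut(\cY_{L,a})$, it must coincide with this Sylow $p$-subgroup; therefore $\K(\cX/E_{q^2})=\K(\cX/E_q)^{E_{q^2}/E_q}=\K(x)$.

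For (2): I would first invoke Proposition \ref{prop44} (whose hypothesis (B) is satisfied): $E_{q^2}$ has exactly two short orbits $\Omega_1,\Omega_2$ of length $q$, with distinct stabilizers $M_1,M_2$ of order $q$; the cover $\cX\to\cX/M_i$ is unramified for $i\ne1,2$; and $\cX/M_1,\cX/M_2$ are rational. Applying the Deuring--Shafarevich formula \eqref{eq2deuring} to $\cX\to\cX/E_q$, whose source has $p$-rank $(q-1)^2$ and whose target is ordinary of genus $q-1$ by Theorem \ref{vanvan}(I), gives $\sum_i(q-\ell_i)=0$, so $\cX\to\cX/E_q$ is unramified; in particular $E_q\notin\{M_1,M_2\}$, so whichever of $M_1,M_2$ is selected below is automatically different from $E_q$. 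Using (1), identify $\cX/E_{q^2}$ with the projective line with coordinate $x$; the cover $\cX/E_q\to\cX/E_{q^2}$, defined by $L(y)=ax+1/x$, is totally ramified exactly over $x=0$ and $x=\infty$. Since $E_q$ acts freely on $\cX$, one has $E_q\cap M_i=\{1\}$ and each $\Omega_i$ is a single free $E_q$-orbit, which therefore maps onto a single place of $\cX/E_q$ fixed by $E_{q^2}/E_q$; a Deuring--Shafarevich count on $\cX/E_q\to\cX/E_{q^2}$ shows these two places are exactly those lying over $x=0$ and $x=\infty$. Relabel so that $\Omega_2$ lies over $x=0$ and $\Omega_1$ over $x=\infty$, and set $M:=M_1$.

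Next I would analyze the cover $\cX/M_1\to\cX/E_{q^2}$, which is Galois with group $E_{q^2}/M_1\cong E_q$ and has rational source and target. Threading ramification indices through $\cX\to\cX/M_1\to\cX/E_{q^2}$ shows that $\cX/M_1\to\cX/E_{q^2}$ is unramified outside $\{x=0,x=\infty\}$ and, over $x=\infty$, unramified as well (there $\cX\to\cX/M_1$ is totally ramified along $\Omega_1$, the stabilizer being $M_1$); since Deuring--Shafarevich gives $\sum_j(q-\ell_j)=q-1$ for this cover, it has exactly one totally ramified place, necessarily over $x=0$, and then comparing ramification indices along $\Omega_2$ forces $|M_1\cap M_2|=1$. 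Hence $\cX/M_1\to\cX/E_{q^2}$ is totally ramified over $x=0$ and unramified elsewhere, so by the structure of generalized Artin--Schreier extensions \cite[Proposition 3.7.10]{Sti} we may write $\K(\cX/M_1)=\K(x,z)$ with $\tilde L(z)=h(x)$, where $\tilde L$ is a separable $p$-linearized polynomial of degree $q$ and $h\in\K(x)$ has, in reduced form, its only pole at $x=0$; the Riemann--Hurwitz formula \eqref{eq1} forces the reduced pole order $m$ there to satisfy $(q-1)(m+1)=2q-2$, hence $m=1$, so $h(x)=ex^{-1}+b'$ with $e\in\K^{*}$. Rescaling $\tilde L$ by $e^{-1}$ and translating $z$ by a root of $\tilde L(T)=b'$ then puts the equation of $\cX/M_1$ in the required form $\tilde L(z)=b+1/x$.

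The step I expect to be the main obstacle is the ramification bookkeeping of the last paragraph: identifying which of $M_1,M_2$ (with the chosen labelling) has quotient ramified over $x=0$ rather than over $x=\infty$, and extracting $|M_1\cap M_2|=1$ at the same time, since both facts come only from matching ramification indices in the towers $\cX\to\cX/M_i\to\cX/E_{q^2}$ and $\cX\to\cX/E_q\to\cX/E_{q^2}$ against the Deuring--Shafarevich counts. A secondary subtlety is justifying that $h$ may be chosen with a single simple pole, which rests on the reduction to normal form in \cite[Proposition 3.7.10]{Sti} rather than on elementary manipulation.
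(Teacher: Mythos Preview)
Your proof is correct and follows essentially the same strategy as the paper's: part (1) via identifying $E_{q^2}/E_q$ with the unique Sylow $p$-subgroup of $\aut(\cY_{L,a})$, and part (2) by taking $M$ to be the $E_{q^2}$-stabilizer of the places over $x=\infty$, showing $\cX/M$ is rational with $\cX/M\to\K(x)$ totally ramified only over $x=0$, and then reading off the Artin--Schreier normal form. Your version is slightly more streamlined in two places---you import the rationality of $\cX/M$ and the short-orbit structure from Proposition~\ref{prop44} rather than rederiving them via Riemann--Hurwitz, and your pole-order computation $(q-1)(m+1)=2q-2$ replaces the paper's case analysis on $\deg f,\deg g$---but the architecture is the same.
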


\begin{proof}
Since $[\K(\cX):\K(x)]=q^2=[\K(\cX): \K(\cX / E_{q^2})]$, it is enough to prove that $\tau(x)=x$ for any $\tau \in E_{q^2} \setminus E_q$. Since $\tau$ and $E_q$ commute, $\tau$ induces an automorphism $\tau^\prime$ of $\K(x,y)$. If $\tau^\prime$ is trivial then $\tau(x)=x$ and {\it (1)} follows. Otherwise, $\tau^\prime$ has order $p$. Clearly $E_{q^2} / E_q \cong \tilde{E}_q$, where $\tilde{E}_q$ is an elementary abelian subgroup of $\aut(\cY_L)$ of order $q$. Arguing as in the proof of Theorem \ref{th1}, $\aut(\cY_L)$ has a unique elementary abelian group $F$ of order $q$, namely
$$F=\{\tau_{\alpha}: \ (x,y) \mapsto (x,y+\alpha) \mid L(\alpha)=0\},$$
and hence $F=\tilde{E}_q$. Hence $\tau(x)=x$ for every $\tau \in E_{q^2} \setminus E_q$ and (1) follows.
From {\it (1)}, $\K(\cX / E_{q^2})= \K(x)$, that is, $\cX / E_{q^2}=\mathbb{P}^1(\K)$. The curve $\cY_L$ is the quotient curve $\cX_{(L,L)} / H$, where $$H=\{ \tau_{\alpha,\alpha}: \ (x,y) \mapsto (x+\alpha, y+\alpha) \mid L(\alpha)=0\}.$$ In fact it is sufficient to consider the functions $\eta,\theta \in \K(\cX_{(L,L)})$ with $\eta=L(y)$ and $\theta=x+y$. By direct checking $L(\theta)=\eta+ 1/\eta$ and $\K(\cX_{(L,L)} / H)=\K(\eta,\theta)$. Since $\cX_{(L,L)}$ is an ordinary curve of genus $(q-1)^2$ and the cover $\cX_{(L,L)} \to \cX_{(L,L)}/H$ is unramified, from the Deuring-Shafarevich formula and the Riemann-Hurwitz formula, we have that $\cY_L$ is an ordinary curve of genus $\gg^\prime=q-1$.
The Deuring-Shafarevich formula applied to $E_q$ shows that the extension $\K(\cX) | \K(\cY_L)$ is unramified. Let $P_0$ and $P_\infty$ be respectively the zero and pole of $x$ in $\K(x)$. Then $P_0$ and $P_\infty$ are totally ramified in the extension $\K(\cY_L)|\K(x)$ and no other place of $\mathbb{P}^1(\K)$ ramifies; see \cite[Proposition 3.7.10]{Sti}. Therefore, both $P_0$ and $P_\infty$ split completely in $\cX$. Let $M$ be the stabilizer in $E_{q^2}$ of a place $Q_\infty$ of $\cX$ lying over $P_\infty$. We show that $P_\infty$ is unramified in the extension $\K(\cX/M) | \K(x)$. Note that $|M|=q$, since $P_\infty$ splits in $q$ distinct places in $\cX$.
Furthermore, since $E_{q^2}$ is abelian, each place of $\cX$ lying over $P_\infty$ has the same stabilizer $M$.
Therefore, $P_\infty$ splits completely in $\cX/M$.
Applying the Riemann-Hurwitz formula to the extension $\K(\cX)|\K(\cX /M)$ yields
$$2(q-1)^2-2 \geq q(2\gg(\cX / M)-2) + 2q(q-1).$$
Thus $\gg(\cX /M)=0$. Clearly $[\K(\cX /M):\K(x)]=q$, since
$$q^2=[\K(\cX):\K(x)]=[\K(\cX): \K(\cX /M)][\K(\cX /M): \K(x)]=q[\K(\cX /M): \K(x)].$$
From the Deuring-Shafarevich formula applied to the extension $\K(\cX/M) | \K(x)$, we have that $\K(x)$ has only one place that ramifies in $\K(\cX/M) | \K(x)$, and this place must be $P_0$.

We prove that the quotient curve $\cX/M$ has affine equation
$$\tilde{L}(z)=b+\frac{1}{x},$$
for some $z \in \K(\cX)$, $b \in \K$, and $\tilde{L}(T) \in \K[T]$ a separable $p$-linearized polynomial of degree $q$.
Since $\K(\cX /M)| \K(x)$ is a generalized Artin-Schreier extension (\cite[Proposition 3.7.10]{Sti}), we have that $\K(\cX /M)=\K(x,y)$ where $\tilde{L}(y)=f(x)/g(x)$ for some separable $p$-linearized polynomial $\tilde{L}(T) \in \K[T]$ of degree $q$ and $f(x)/g(x) \in \K(x)$.
Recall that $P_0$ is the unique pole of $f(x)/g(x)$, and it is a simple pole.
\begin{itemize}
\item Suppose that $\deg(f)>\deg(g)$. Then $f(x)/g(x)$ has a pole at $P_\infty$, a contradiction.
\item Suppose that $\deg(f)=\deg(g)>0$. Let $g(x)=x \cdot r(x)^p$ with $r(x) \in \K[x]$, then $f(x)=(x+\alpha)s(x)^p$ with $\alpha\in\K$ and $s(x) \in \K[x]$. If $r(x)$ has a zero $\beta$, then by \cite[Proposition 3.7.10]{Sti} it is easily checked that $f(x)/g(x)$ has a corresponding pole of multiplicity at least $p-1$, a contradiction. Therefore, $g(x)=\beta x$ and $f(x)=x+\alpha$, $\alpha,\beta\in\K$. Applying a linear transformation to $x$, the claim follows.
\item Suppose that $\deg(f)<\deg(g)$ and $\deg(g)>0$. Then, arguing as in the previous case, $f(x)=\alpha $ and $g(x)=\beta x$ with $\alpha,\beta\in\K$. Applying a linear transformation to $x$, the claim follows.
\item Suppose that $\deg(g)=0$. This is impossible since $P_0$ is a pole of $f(x)/g(x)$.
\end{itemize}
\end{proof}

\subsection{Proof of Theorems \ref{th3} and \ref{th4}}
\ \\ \\
We keep our notation introduced in the previous sections.
From Proposition \ref{prop49}, $E_{q^2}$ contains a subgroup $T$ of order $q$ such that the quotient curve $\cX/T$ is the curve $\cY_{L,a}$ with affine equation
$$ L(y)=ax+\frac{1}{x},$$
for some $a \in \K^*$ and $L(T) \in \K[T]$ a separable $p$-linearized polynomial of degree $q$. Let $\K(x,y)$ be the function field $\K(\cX /T)$. From Proposition \ref{prop44}, the $p$-rank of $\cX$ is $\gamma=\gg=(q-1)^2$.
Thus by Proposition \ref{prop410}, $\K(\cX)$ has a subfield $\K(x,z)$ defined by
$$ \tilde{L_1}(z)=b+\frac{1}{x},$$
for some $z \in \K(\cX)$, $b \in \K$, and $\tilde{L_1}(T) \in \K[T]$ a separable $p$-linearized polynomial of degree $q$.
%Thus by Proposition \ref{prop410}, $\K(x,z)$ with
%$$\cY_{\tilde{L}}: \ \tilde{L}(z)=b+\frac{1}{x},$$
%for some $z \in \K(\cX)$, $b \in \K$ and $\tilde{L}(T) \in \K[T]$ a $\bq$-linearized polynomial of degree $q$, is a subfield of $\K(\cX)$.
Hence, the compositum $\K(x,y,z)$ of $\K(x,y)$ and $\K(x,z)$ is a subfield of $\K(\cX)$ such that
\begin{equation}
\label{quasi}
\begin{cases} L(y)=ax+\frac{1}{x}, \\
L_1(z)=b+\frac{1}{x}.
\end{cases}
\end{equation}
Therefore, $\K(x,y,z)=\K(y,z)$ with
\begin{equation}
\label{combino}
(L_1(z)-b)L(y)-(L_1(z)-b)^2=a.
\end{equation}
From Proposition \ref{prop410}, $\K(x,z)=\K(\cX)^{M}$ and $\K(x,y)=\K(\cX)^{T}$, where $M \ne T$ is an elementary abelian $p$-subgroup of $E_{q^2}$ of order $q$. Thus,
$$Gal(\K(\cX) \mid \K(y,z))=Gal(\K(\cX) \mid \K(\cX/M)) \cap Gal(\K(\cX) \mid \K(\cX / T))=M \cap T.$$
Since the cover $\cX \to \cX / T$ is unramified, we have $M \cap T = \{1\}$ and hence $\K(\cX)=\K(y,z)$.

\begin{remark}
\label{rem411}
Every $p$-element of $\aut(\cX)$ is an element of $E_{q^2}$.
\end{remark}

\begin{proof}
Let $\sigma$ be a $p$-element of $\aut(\cX)$. By Nakajima's bound, Theorem \ref{naka} (I), $| \langle E_{q^2}, \sigma \rangle | \leq q^2=| E_{q^2} |$. Therefore $\sigma\in E_{q^2}$.
\end{proof}

Let $z^\prime=z-\delta$, with $L_1(\delta)=b$. Then $\K(y,z)=\K(y,z^\prime)$ where
\begin{equation}\label{fine} L_1(z^\prime)L(y)-L_1(z^\prime)^2=a.\end{equation}
Up to a $\K$-scaling of $z^\prime$ and $y$, we can assume that both $L_1$ and $L$ are monic.
Let $\cZ$ be the plane curve with affine equation $L_1(Z^\prime)L(Y)-L_1(Z^\prime)^2=a$.
By Remark \ref{rem411} and Proposition \ref{prop44},
$$E_{q^2}=\{\tau_{\alpha,\beta}:(y,z^\prime)\mapsto(y+\alpha,z^\prime+\beta)\mid L(\alpha)=L_1(\beta)=0\}\leq\aut(\cZ)$$
has exactly two short orbits $\Omega_1$ and $\Omega_2$, which have length $q$ and are centered at the points at infinity $P_1=(1:1:0)$ and $P_2=(1:0:0)$, respectively.
The $q$ distinct tangent lines to $\cZ$ at $P_1$ have equation $\ell_i:Y-Z^\prime=\epsilon_i$, $i=1,\ldots,q$, and the intersection multiplicity at $P_1$ of $\cZ$ and $\ell_i$ is equal to the intersection multiplicity at $P_1$ of the curve $\cW:L(Y)-L_1(Z^\prime)=0$ with the line $\ell_i$. Since $\cW$ has degree $q$, this implies that $\cW$ splits into linear factors $\ell_1,\ell_2,\ldots,\ell_q$.
Therefore $L(Y)-L_1(Z^\prime)=L_2(Y-Z^\prime)$ for some separable $p$-linearized polynomial $L_2(T)\in\K[T]$ of degree $q$.
Thus, Equation \eqref{fine} is the equation (\ref{l1l2}) defining $\cX_{(L_1,L_2)}$, up to the formal replacement of $y-z^\prime$ with $Y$ and of $z^\prime$ with $bX$, where $b^q=a$.

Let $\bar q$ be the largest power of $p$ such that $\aut(\cX)$ contains a cyclic subgroup $C$ of order $\bq-1$.
Up to conjugation in $\aut(\cX)$, $C$ contains the group
$$\Gamma=\{(X,Y)\mapsto(X+\alpha,Y+\beta)\mid L_1(\alpha)=L_2(\beta)=0\}.$$
Then $\cX\in\mathcal{S}_{q|\bq}$ from Theorems \ref{th1} and \ref{th2}. Thus, Theorem \ref{th3} is proved.

If $L_1\ne L_2$, then from Theorem \ref{th2} $\cX_{(L_1,L_2)}$ does not admit any automorphism group $C_2\times C_2$. Thus, also Theorem \ref{th4} is proved.

\section{Acknowledgments}

This research was supported by the Italian Ministry MIUR, Strutture Geometriche, Combinatoria e loro Applicazioni, PRIN 2012 prot. 2012XZE22K, and by GNSAGA of the Italian INdAM.
The authors would like to thank Nazar Arakelian and G\'abor Korchm\'aros for useful comments and suggestions.

\vspace{0.5cm}\noindent {\em Authors' addresses}:

\vspace{0.2cm}\noindent Maria MONTANUCCI\\ Dipartimento di
Matematica, Informatica ed Economia\\ Universit\`a degli Studi  della Basilicata\\ Contrada Macchia
Romana\\ 85100 Potenza (Italy).\\E--mail: {\tt
maria.montanucci@unibas.it}

\vspace{0.2cm}\noindent Giovanni ZINI \\ Dipartimento di
Matematica e Informatica \\ Universit\`a degli Studi  di Firenze \\ Viale Morgagni \\ 50134 Firenze (Italy).\\E--mail: {\tt
gzini@math.unifi.it}

    \end{document}